\setlist[description]{leftmargin=0cm,labelindent=0cm}
\newcommand{\YA}[1]{{{#1}}}
\newcommand{\R}{{\mathbb R}}
\newcommand{\N}{{\mathbb N}}
\newcommand{\Z}{{\mathbb Z}}
\newcommand{\T}{{\mathbb T}}
\newcommand{\cM}{{\mathcal M}}
\newcommand{\HoneT}{\overline{H}_{1,T}}
\newcommand{\HoneplusT}{\overline{H}_{1,+,T}}
\newcommand{\HoneminusT}{\overline{H}_{1,-,T}}
\newcommand{\chiperp}{\chi_{{\rm per},p}}
\newcommand{\per}{{\rm per}}
\newcommand{\ds}{\displaystyle}
\definecolor{mypurple}{RGB}{140,0,255}
\definecolor{myred}{rgb}{255,0,0}
\definecolor{mydarkturquoise}{RGB}{0,206,209}
\definecolor{mydeeppink}{RGB}{255,20,147}
\definecolor{darkblue}{RGB}{0,0,140}
\definecolor{blue2}{RGB}{0,0,0}
\definecolor{middleblue}{RGB}{0,0,71}
\definecolor{light-gray}{gray}{0.9}
\definecolor{ProcessBlue}{cmyk}{1,0,0,0.40}
\definecolor{Black}{cmyk}{0,0,0,1}
\definecolor{Red}{cmyk}{0,1,1,0.2}
\definecolor{Green}{cmyk}{0.9,0,1,0}
\definecolor{Orange}{cmyk}{0,0.61,0.87,0.5}
\definecolor{Fuchsia}{cmyk}{0.47,0.91,0,0.06}
\definecolor{PineGreen}{cmyk}{0.92,0,0.59,0.30}
\newlength{\flexcheckerboardsize}
\newcommand{\defineflexcheckerboard}[4]{
    \setlength{\flexcheckerboardsize}{#2}
    \pgfdeclarepatterninherentlycolored{#1}
        {\pgfpointorigin}{\pgfqpoint{2\flexcheckerboardsize}    
        {2\flexcheckerboardsize}}
        {\pgfqpoint{2\flexcheckerboardsize}
        {2\flexcheckerboardsize}}%
        {
            \pgfsetfillcolor{#4}
            \pgfpathrectangle{\pgfpointorigin}{
            \pgfqpoint{2.1\flexcheckerboardsize}    
                {2.1\flexcheckerboardsize}}% make
                                % slightly larger to ensure that tiles
                                % are really solid
          \pgfusepath{fill}
          \pgfsetfillcolor{#3}
          \pgfpathrectangle{\pgfpointorigin}
            {\pgfqpoint{\flexcheckerboardsize}
            {\flexcheckerboardsize}}
          \pgfpathrectangle{\pgfqpoint{\flexcheckerboardsize}
            {\flexcheckerboardsize}}
            {\pgfqpoint{\flexcheckerboardsize}
            {\flexcheckerboardsize}}
            \pgfusepath{fill}
        }
}
\numberwithin{equation}{section}
 \newtheorem{remark}{Remark}[section]
\DeclareMathOperator*{\argmin}{arg\,min}
\begin{document}

\title{ Homogenization of \\Hamilton-Jacobi equations with  defects\\leading to stratified problems}

\author{
   Yves Achdou\thanks { Universit{\'e} Paris Cit{\'e} and  Sorbonne Universit{\'e}, CNRS, Laboratoire Jacques-Louis Lions, (LJLL), F-75006 Paris, France, achdou@ljll-univ-paris-diderot.fr}
   \and
    Claude Le Bris\thanks{{\'E}cole des Ponts and INRIA, 6-8 avenue Blaise Pascal, Cit{\'e} Descartes, Champs-sur-Marne, 77455 Marne La Vall{\'e}e, France,   claude.le-bris@enpc.fr}
    % Nicoletta Tchou \thanks{Univ Rennes, CNRS, IRMAR - UMR 6625, F-35000 Rennes, France, nicoletta.tchou@univ-rennes1.fr}
  }

%%%%%%%%%%%%%%%%%%%%%%%%%%%%%%%%%%%%%%%%%%%%%%%%%%%%%%%%%%%%%%%%%
\maketitle
\begin{abstract}
  We study homogenization  of a class of bidimensional stationary Hamilton-Jacobi equations where the  Hamiltonian is obtained by perturbing near a half-line of the state space a Hamiltonian that either does not have fast variations with respect to the state variable, or depends on the latter in a periodic manner. We prove that the limiting problem belongs to the class of stratified problems introduced by A. Bressan and Y. Hong and later studied by G. Barles and E. Chasseigne.
  The related Whitney stratification is made of a submanifold of dimension zero, namely the endpoint of the half-line,  a submanifold of dimension one, the open half-line, and the complement of the latter two sets which is a submanifold of dimension two. The limiting problem involves effective Hamiltonians that are associated to the  above mentioned three submanifolds and keep track of the perturbation. Another example in which the Hamiltonian is perturbed in a tubular neighborhood of a line is studied.
\end{abstract}

\section{Introduction}
\label{sec:introduction}

Recently, there has a been a growing research effort on Hamil- -ton-Jacobi equations with discontinuous Hamiltonians and optimal control problems with discontinuities in the cost or the dynamics. In particular, important progress have been made when the discontinuities are located on submanifolds of codimension one, see the first part of the book of G. Barles and E. Chasseigne, \cite{MR4704058}, and the references therein.  The case when the locus of the  discontinuities may be locally of any codimension is more complex.
In such situations, a complete theory has been developed under the important geometric assumption  that the problem is {\sl stratified}, i.e. that the discontinuities lie on a union of submanifolds that form a Whitney stratification of $\R^d$. A Whitney stratification is a partition of  $\R^d$  into a  family $(\cM_k)_{k=0,\dots, d}$ of disjoint submanifolds  with specific properties  (in particular, $\cM_k$  is of dimension $k$  if it is not empty). Since the stratifications that will arise in the present work are  simple, we do not thoroughly write the  rather long definition of Whitney stratifications and  refer the reader to  \cite[Definition 2.2]{MR4704058}. Besides, the stratifications that will appear below are {\sl flat} Whitney stratifications, which means that the connected components of $\cM_k$ locally coincide  with affine subspaces of dimension $k$,  see \cite[Definition 2.3]{MR4704058}.  Stratified problems have been introduced by A. Bressan and Y. Hong, see \cite{MR2291823} and later studied by G. Barles and E. Chasseigne, see \cite{MR4704058}. In particular, these authors have proved  comparison principles for viscosity sub/supersolutions of stratified problems. These results are of  course crucial because they imply  uniqueness of viscosity solutions.
Note that  in the theory of stratified viscosity solutions, the convexity of the Hamiltonian with respect to the momentum is a key assumption.

In the present work, we aim at giving examples of situations in which the homogenization of a continuous problem leads to a stratified problem.

In that respect, this work can be seen as the continuation of \cite{MR4643677}, where the authors studied homogenization  of a class of stationary Hamilton-Jacobi equations
with a Hamiltonian  obtained by perturbing a periodic Hamiltonian in a bounded neighborhood of  the origin. In  \cite{MR4643677},  such a perturbation was termed  a local \emph{defect}. The main result of \cite{MR4643677} is  that the limiting problem consists of a Hamilton-Jacobi equation outside the origin, with the same effective Hamiltonian as in periodic homogenization, supplemented at the origin with an effective  Dirichlet condition  that keeps track of the perturbation. %Note that, according to the effective  Dirichlet data, the
%solution of the latter problem  may or may not coincide with the solution of the Hamilton-Jacobi equation posed in the whole space.
Note  that the codimension of the origin is $d$ and that the Dirichlet problems in  $\R^d\setminus \{0\}$ may have several viscosity solutions in the sense of H. Ishii, see e.g. \cite[chapter V, section 4]{MR1484411}. However, it was shown in \cite{MR4643677} that the effective  problem is a  {\sl stratified} problem in the sense of  \cite{MR4704058}, for which uniqueness holds. In other words, the formulation of the effective Dirichlet problem found in \cite{MR4643677} is more precise than the formulation of H. Ishii.  Besides, according to the effective  Dirichlet data, the
solution of the effective problem  may or may not coincide with the solution of the Hamilton-Jacobi equation posed in the whole space.  Finally,  \cite{MR4643677} is  much related to  a series of lectures~\cite{PLL-college} at Coll{\`e}ge de  France by P-L. Lions, where some results obtained in collaboration with P. Souganidis, ~\cite{PLL}, were described.  In the control theoretic interpretation of \cite{PLL-college,PLL}, the typical local perturbation of the running cost was a bump oriented so that  the neighborhood of the origin became repulsive. In~\cite{PLL-college}, it was shown that the presence of such a  defect indeed does not affect the homogenized limit, but only possibly \emph{"the next order correction"}, that is the definition of the corrector function itself. In other words, as mentioned above, homogenized limit is a viscosity solution of the effective Hamilton-Jacobi equation posed in the whole space. In contrast,  it was shown in \cite{MR4643677} that if the defect makes the origin attractive, then it  affects the homogenized limit itself, and
the latter is impacted by the effective Dirichlet condition. We will discuss the  technical aspects of  \cite{MR4643677} in more details later, when they are needed.

Note that homogenization theory in the presence of local defects within an otherwise periodic environment was first introduced in~\cite{Milan}, in the first of a series of works by X.~Blanc, C. Le Bris and P-L.~Lions. It was further developed in~\cite{CPDE-2015,CPDE-2018} and other subsequent works by various authors, considering different classes of defects such as, in particular, interfaces between two different periodic media.  In those articles, the typical setting  is that of a linear non-degenerate elliptic equation, first in divergence form and next in more general form. Only recently, some quasilinear second order elliptic equation was considered in~\cite{MR4523487}.

In the present work, we aim at studying examples in which homogenization leads to stratified problems with  more complex stratifications than in \cite{MR4643677}, for example with a complete hierarchy of submanifolds. The new technical difficulty will consist of  combining the different correctors associated with the different submanifolds.

 We work in $\R^2$ for simplicity, and  discuss  the homogenization limit for a first order stationary Hamilton-Jacobi equation of the form 
\begin{equation*}
\alpha \, u_\varepsilon+ H\left(x,\frac x \varepsilon, D u_\varepsilon\right)=0\quad \hbox{ in }\R^2.
\end{equation*}
 The Hamiltonian is a continuous function of its three arguments, convex with respect to its last argument, and arises from an optimal control problem (the assumptions will be made more precise later).
We mainly consider three situations:
\begin{description}
\item[Case 1] The Hamiltonian $ H(x,y,p)$ does not depend on the fast variable $y\in \R^2$,   except if $y\in \Omega$, where  $\Omega$ is a  neighborhood of the line half-line $\overline{\cM_1}$, $\cM_1= \{(y_1,0), y_1<0\}$. The set $\Omega$ is connected and is obtained as the union of a tubular
  neighborhood of $\cM_1$ and of a bounded neighborhood of  the endpoint of $\cM_1$, namely the origin $0_{\R^2}$. 
 It is also assumed that for $y \in \Omega$ such that $|y_1|$ is large enough, $H(x, y,p)$ coincides with $H_{1,\per}(x,y,p)$, where $ H_{1,\per}$ is periodic with respect to $y_1$, the first coordinate of $y$. The  regions of $\R^2$ introduced above are displayed on   Figure \ref{fig:1}. 
\item[Case 2] The function $y\mapsto H(x,y,p)$ is $1$-periodic with respect to both fast variables $y_1$ and $y_2$, the two coordinates of $y$,  except in the region $\Omega$  introduced above.  For $y\in \Omega$  such that $|y_1|$ is large enough,  $H(x, y,p)= H_{1,\per}(x,y,p)$, where $ H_{1,\per}$ is $1$-periodic with respect to $y_1\in \R$, see Figure \ref{fig:2} for an illustration.
  \item[Case 3] Here, we set $\cM_1= \cM_{1,-} \cup\cM_{1,+}$ where  $\cM_{1,\pm}=  \{(y_1,0), \pm y_1>0\}$. The function $\R^2 \ni y\mapsto H(x,y,p)$ is constant  except in a  tubular neighborhood $\Omega$ of  the straight line $y_1=0$, see Figure \ref{fig:3} for an illustration. 
  For $y$ near $\cM_{1,-}$ and far enough from the origin, $H(x, y,p)$ coincides with $ H_{1,-,\per}(x,y,p)$, where $ H_{1,-,\per}$ is periodic with respect to $y_1\in \R$. Similarly, near $\cM_{1,+}$ and far enough from the origin, $H(x, y,p)= H_{1,+,\per}(x,y,p)$, where $ H_{1,+,\per}$ is periodic with respect to $y_1$. Note that $ H_{1,+,\per}$ and  $H_{1,-,\per}$ can be chosen completely independently, with different periods  for example.  %The abovementioned regions of $\R^2$ are displayed on   Figure \ref{fig:4}. 
\end{description}

In the three cases described above,  $H$ is obtained by perturbing in $\Omega$ a function either invariant or periodic  with respect to the fast variable $y$. We can see this perturbation as a  \emph{ longitudinal defect} localized near a half-line or a line.

The present work can also be seen as the continuation of  two articles  \cite{MR3565416} and \cite{MR3912640}, that addressed   Hamilton-Jacobi equations  in an environment consisting of two different homogeneous media separated by an oscillatory interface. The oscillations of the interface have small period and amplitude, and as  the latter parameter vanishes,
the interface tends to an hyperplane. At the limit when both parameters vanish, one finds on the flat interface an effective nonlinear  transmission condition  keeping memory of the previously mentioned microscopic oscillations. The effective problem can be seen as a stratified problem in which the discontinuity lies on  a submanifold of codimension one. By contrast, in the present work, the effective problem will involve both a submanifold of codimension one, namely $\cM_1$ defined above and a submanifold of codimension two, namely $\cM_0=\{0\}$.
% Note that  \cite{MR3565416,MR3912640} was part a research effort which dealt with Hamilton-Jacobi equations on networks or with discontinuities located on manifolds of codimension one. G. Galise, C. Imbert and R. Monneau studied in  \cite{MR3441209} a family of time dependent Hamilton-Jacobi equations on the simplest possible network composed of two half-lines with a perturbation of the Hamiltonian localized in a small region close to the junction.  Related homogenization problems
% with applications  to traffic flows were discussed in \cite{MR3809148,MR4450245} by N. Forcadel and his coauthors.
Key arguments in \cite{MR3565416, MR3912640} are the construction of families of correctors that account for the localized perturbations of the environment and are defined in unbounded domains. We will see that the construction of such correctors  also plays a key role in the present work. 

Our main results (Theorem \ref{sec:main-result-1} for Case 1,  Theorem \ref{sec:main-result-2} for Case 2 and Theorem \ref{sec:main-result-3} for Case 3) 
state that when $\varepsilon$ tends to zero, $u_\varepsilon$ converges to the solution of an effective stratified problem.
In the three examples described above, as in \cite{MR4643677} and in contrast with  \cite{MR3565416,MR3912640},  we see that the origin plays a special role, because near this point,  the perturbation of the Hamiltonian is not periodic   with respect to the longitudinal variable. This explains the fact that the effective problem will be a stratified problem for the flat Whitney stratification of $\R^2$ : $\R^2= \cM_0\cup\cM_1\cup \cM_2$, where the one-dimensional submanifold $\cM_{1}$ has been defined above (the definition in Case 3 differs from that in Cases 1-2), $\cM_0=\{0\}$ is of dimension zero and $\cM_2= \R^2 \setminus ( \cM_0\cup\cM_1)$ is an open subset of $\R^2$. It will involve both effective  (tangential) Hamiltonians defined on $\cM_1$ and reminiscent of those found in \cite{MR3565416}, and an effective Dirichlet data at the origin, similar to the one found in \cite{MR4643677}.

Note that we have decided to focus on bidimensional problems only for simplicity. Generalization to the homogenization of  Hamilton-Jacobi equations in $\R^d$ with defects localized near a half-hyperplane or two aligned complementary half-hyperplanes does not bring any new difficulty. Similarly,  homogenization with defects located near a smooth connected submanifold of dimension $m<d$ of $\R^d$  may be tackled with the same techniques as those proposed below.

Note also that the assumption regarding the continuity of $H$ with respect to $y$ may be relaxed: for example, our techniques can be applied to the homogenization  of a stratified problem with a Hamiltonian  piecewise constant in the state variable, and associated with the Whitney stratification $\R^2= \widetilde \cM_{2,\varepsilon}\cup  \widetilde \cM_{1,\varepsilon}\cup \widetilde \cM_0$, where $\widetilde \cM_0=\{0\}$, $\widetilde \cM_{1,\varepsilon}=\{(x_1,\varepsilon \gamma(x_1/\varepsilon)), x_1<0\}$, $\gamma$ being a smooth and periodic function defined on $\R$
and such that $\gamma(0)=0$, and $\widetilde \cM_{2,\varepsilon}=  \R^2 \setminus ( \widetilde\cM_0\cup\widetilde\cM_{1,\varepsilon})$. In this case, the effective problem is a stratified problem for the flat Whitney stratification  $\R^2= \cM_0\cup\cM_1\cup \cM_2$, with $\cM_1$ defined in Case 1 and $\cM_0=\{0\}$.

The article is organized as follows: in Section \ref{sec:first_example}, focusing on Case 1, we set the problem, state the main result (Theorem \ref{sec:main-result-1}) and review relevant notions belonging to the theory of stratified control problems. Section  \ref{sec:proof} is devoted to the proof of Theorem \ref{sec:main-result-1}.
Cases 2 and 3  are dealt with  in Section \ref{sec:generalization},  more rapidly, because many arguments are similar.

 \section{A prototypical case with a  longitudinally periodic  defect  localized in the neighborhood of a half-line}
\label{sec:first_example}

\subsection{Setting and assumptions}
\label{sec:setting}
Let us define the problem and list the assumptions.
We wish to emphasize  that  these assumptions  (regarding controllability, convexity, coercivity, regularity, - see below) are classical and that we do not seek to make them as general as possible, our focus being on other issues.

The description that follows is illustrated on Figure \ref{fig:1} below.

% Yet,
% in Remark \ref{sec:main-result-7} below,  we explain that  without these assumptions, there are important difficulties and open questions.

Hereafter, $B_r(x)$ stands for the ball of radius $r$ centered at $x\in \R^2$.  Given $R_0>0$, let $\Omega$ be the subset of $\R^2$ defined by
\begin{eqnarray}
  \label{eq:1}
  \Omega=\Bigl((-\infty,0]\times (-R_0,R_0)\Bigr) \cup B_{R_0}(0).
\end{eqnarray}
For a small positive parameter  $\varepsilon$
that will eventually vanish, set
$\Omega_\varepsilon=\varepsilon \Omega$.

We consider Hamilton-Jacobi equations related to infinite horizon optimal control problems in $\R^2$. 
The Hamiltonian $H_\varepsilon: \R^2\times \R^2 \to \R$ is of the form 
\begin{equation}
  \label{eq:2}
H_\varepsilon(x,p)= \max_{a\in A } \Bigl(-p\cdot f_\varepsilon (x,a)-\ell_\varepsilon(x,a)\Bigr).
\end{equation}
Here, $A$ is a compact metric space. A partial justification of this assumption
is that it is made in \cite{MR2291823,MR4704058}, although it does not seem crucial in the theory of stratified control problems.

\medskip

We assume that the  dynamics  $f_\varepsilon: \R^2\times A\to \R^2$ and cost  $\ell_\varepsilon: \R^2\times A\to \R$ have the following properties:
\begin{enumerate}
\item They  have the form \begin{equation}
  \label{eq:3}
  f_\varepsilon(x,a)= f\left( x,\frac x \varepsilon, a\right),\quad \hbox{and} \quad  \ell_\varepsilon(x,a)= \ell\left( x,\frac x \varepsilon, a\right),
\end{equation}
where $f: \R^2\times\R^2\times A \to \R^2$ and   $\ell: \R^2\times\R^2\times A \to \R$ are  bounded and  continuous. The function $f$ is assumed
 Lipschitz continuous with respect to its  first two variables uniformly with respect to its third variable, i.e. 
 for any  $(x,y,\tilde x, \tilde y)\in (\R^2)^4$ and $a\in A$,
 \begin{equation}
   \label{eq:flip}
    |f(x,y,a)-f(\tilde x,\tilde y,a)|\le   L_f (|x-\tilde x| +|y-\tilde y|).
 \end{equation}
    Concerning $\ell$, there exists  a modulus of continuity $\omega_\ell$ such that 
 for any  $(x,y,\tilde x, \tilde y)\in (\R^2)^4$ and $a\in A$,
 \begin{equation}
   \label{eq:l_UC}
    |\ell(x,y,a)-\ell(\tilde x,\tilde y,a)|\le    \omega_\ell(|x-\tilde x| +|y-\tilde y|).
 \end{equation}
  Define
  \begin{equation}
    \label{eq:MfMl}
M_f=     \sup_{x\in \R^2, y\in \R^2, a \in    A} | f(x,y,a)|, \quad M_\ell=     \sup_{x\in \R^2, y\in \R^2, a \in    A} | \ell(x,y,a)| .       
  \end{equation}
  We also suppose that there exists  $r_f>0$ such that for any $x\in \R^2$, $y\in \R^2$, $\{f(x,y,a),a\in A\}$ contains the ball $B_{r_f}(0)$, which implies that the trajectories are locally strongly controllable.
 
\item    There exist functions $\bar{f}:  \R^2\times A\to \R^2$ and $\bar{\ell}:  \R^2\times A\to \R$ such that
  \begin{equation}
  \label{eq:4}
 f( x,y, a)= \bar{f}(x,a),\quad \hbox{and} \quad  \ell( x,y, a)= \bar{\ell}(x,a) \quad \hbox{if } y\not \in \Omega.
\end{equation}

\item
  % In the set $\R^2 \times \Bigl( (-\infty,0] \times \R \Bigr) \times A$,
  % $f$ is periodic with respect to $y_1$, the first coordinate of $y$, with period $1$. More precisely,
  There exist functions $f_{1,\per}:  \R^2\times \R^2\times A\to \R^2$, $(x,y,a)\mapsto f_{1,\per}(x,y,a)$ and  $\ell_{1,\per}:  \R^2\times \R^2\times A\to \R$, $(x,y,a)\mapsto \ell_{1,\per}(x,y,a)$ such that
  \begin{enumerate}
     \item $f$ and $\ell$ coincide respectively  with $f_{1,\per}$ and $\ell_{1,\per}$ in the set $\R^2 \times \Bigl( (-\infty,0] \times \R \Bigr) \times A$, i.e. for all $x\in \R^2$, $y\in  (-\infty,0] \times \R$ and $a\in A$, $f(x,y,a)=f_{1,\per}(x,y,a)$ and $\ell(x,y,a)=\ell_{1,\per}(x,y,a)$
  \item $f_{1,\per}$ and  $\ell_{1,\per}$ are $1$-periodic with respect to $y_1$
  \item for all $x\in \R^2$ and $a\in A$, $f_{1,\per}(x,y,a)=\bar f (x,a)$ and $\ell_{1,\per}(x,y,a)=\bar \ell (x,a)$ if $|y_2|\ge R_0$.
  \end{enumerate}
  The role of the index $1$ in  $f_{1,\per}$ and $\ell_{1,\per}$   is to emphasize that the functions are periodic with respect to the first coordinate $y_1$ of $y$. 
\end{enumerate}

\begin{remark}
  In Subsection \ref{sec:generalization1}, we will discuss  the
   extension to the case when $f( x,y, a)$ and  $\ell( x,y, a)$  coincide respectively with  $f_{\per}(x,y,a)$ and $\ell_{\per}(x,y,a)$ at all $ y\not \in \Omega$, where $f_{\per}$ and  $\ell_{\per}$ are $1$-periodic with respect to $y_1$ and $y_2$.
\end{remark}

\medskip

It is easy to check that the Hamiltonian $H_\varepsilon$ defined in \eqref{eq:2} has the following properties:
\begin{enumerate}
\item $H_\varepsilon$ is of the form
\begin{equation}
  \label{eq:7}
  H_\varepsilon(x,p)= H\left( x,\frac x \varepsilon, p\right),
\end{equation}
where $H: \R^2\times\R^2\times \R^2 \to \R$ is defined by $H(x,y,p)= \max_{a\in A } \Bigl(-p\cdot f(x,y,a)-\ell(x,y,a)\Bigr)$. The function $H$ is 
 is convex with respect to its third argument,
and  for any  $x,y,\tilde x,\tilde y, p,q\in \R^2$,
 \begin{eqnarray}
   \label{eq:8} H(x,y,p)\ge r_f |p| - M_\ell,\\
   \label{eq:9}|H(x,y,p)-H(\tilde x, \tilde y,p)| \le  \left (
   \begin{array}[c]{l}
        L_f|p| (|x-\tilde x| + |y-\tilde y|) \\ + \omega_{\ell} (|x-\tilde x| +|y-\tilde y|)
   \end{array}
             \right),                                       \\
   \label{eq:10} |H(x,y,p)-H(x,y,q)|\le M_f |p-q|.
 \end{eqnarray}
 Property (\ref{eq:8}) implies the coercivity of $H$ w.r.t. its third variable uniformly with respect to  its first two variables, i.e.  $\lim_{|p|\to \infty} \inf_{x\in \R^2,y\in \R^ 2} H(x,y,p)=+\infty$.
\item Defining $   \overline H(x,p) =  \max_{a\in A } \Bigl(-p\cdot \bar{f}(x,a)-\bar{\ell}(x,a)\Bigr)$   for $x\in \R^2$ and $p\in \R^2$, we see that
  \begin{equation}
  \label{eq:11}
 H( x,y, p)= \overline H(x,p) \quad \hbox{if } y\not \in \Omega.
\end{equation}
\item  %In the set $\R^2 \times \Bigl( (-\infty,0] \times \R \Bigr) \times \R^2 $,
 % $H$ is periodic with respect to $y_1$ with period $1$:
  Defining $H_{1,\per}:  \R^2\times \R^2\times \R^2\to \R$ by
  \begin{displaymath}
  H_{1,\per}(x,y,p)=\max_{a\in A}  \Bigl(-p\cdot f_{1,\per}(x,y,a)-\ell_{1,\per}(x,y,a) \Bigr),    
  \end{displaymath} we see that
  \begin{enumerate}
     \item $H$ coincides with $H_{1,\per}$ in the set $\R^2 \times \Bigl( (-\infty,0] \times \R \Bigr) \times \R^2$, i.e. for all $x\in \R^2$, $y\in  (-\infty,0] \times \R$ and $p\in \R^2$, $H(x,y,p)=H_{1,\per}(x,y,p)$
  \item $H_{1,\per}$ is $1$-periodic with respect to $y_1$
  \item for all $x\in \R^2$ and $p\in \R^2$, $H_{1,\per}(x,y,p)=\overline H  (x,p)$ if $|y_2|>R_0$.
  \end{enumerate}
\end{enumerate}

\bigskip

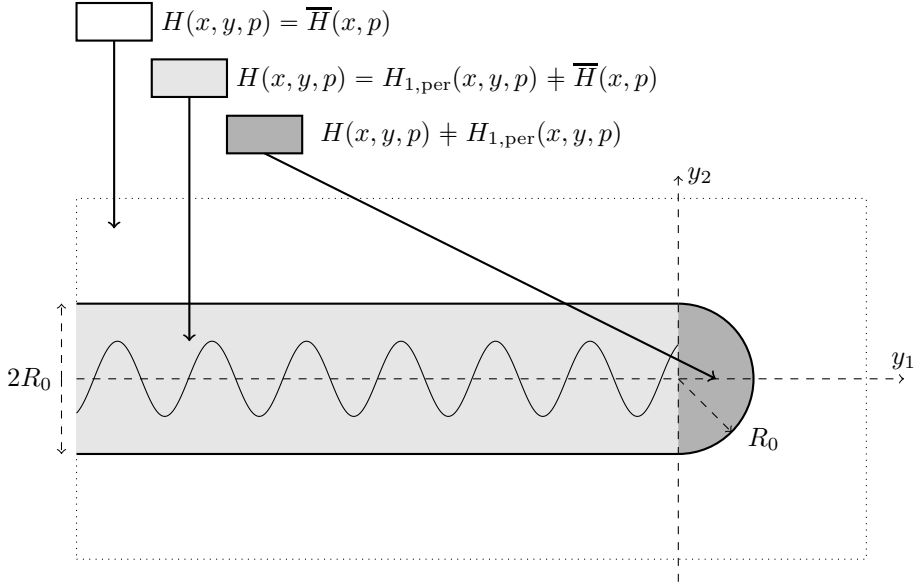
\begin{figure}[H]
  \begin{center}
    \begin{tikzpicture}
     
  \draw [dotted] (-4,-2.4) rectangle (6.5,2.4);
  \filldraw [fill=black!10,black!10] (-4,-1) rectangle (4,1);
  \draw [line width=0.8pt]  (-4,-1) -- (4,-1);
  \draw [line width=0.8pt]  (-4,1) -- (4,1);
  
  \path[fill=black!30] (4,-1)  arc (-90:90:1)  -- (4,-1);
  \path[draw,line width=0.8pt] (4,-1)  arc (-90:90:1);

  \draw [line width=0.8pt]  (-4,5) rectangle (-3,4.5);
   \draw (-3,4.75) node[right] {{$H(x,y,p)=\overline{H}(x,p)$}};
   \filldraw [line width=0.8pt,fill=black!10]  (-3,4.25) rectangle (-2,3.75);
       \draw (-2,4) node[right] {{$H(x,y,p)= H_{1,\per}(x,y,p)\not= \overline{H}(x,p) $}};
       \filldraw [line width=0.8pt,fill=black!30]  (-2,3.5) rectangle (-1,3);
        \draw (-1,3.25) node[right] {{ $ H(x,y,p)  \not=  H_{1,\per}(x,y,p) $}};

        \draw[->, draw opacity =1,line width=0.8pt] (-3.5,4.5) --(-3.5,2);
         \draw[->,draw opacity =1, line width=0.8pt] (-2.5,3.75) --(-2.5,0.5);
  \draw[->, draw opacity =1, line width=0.8pt] (-1.5,3) --(4.5,0);

  \draw[->, dashed] (4,-2.7) --(4,2.7);
  \draw[->,dashed] (-4,0) --(7,0);
  \draw (4,2.7) node[right] {{$y_2$}};
  \draw (7,0) node[above] {{$y_1$}};
  \draw[->, dashed] (-4.2,0) --(-4.2,1);
  \draw[->, dashed] (-4.2,0) --(-4.2,-1);
  \draw[->,dashed] (4,0)--( 4.7071067,-0.7071067) ;
  \draw ( 4.8,-0.8)  node[right] {{$R_0$}};
  \draw (-4.2,0) node[left] {{$2R_0$}};

  \draw [domain=-4:4, samples=10000] plot (\x,{0.5*sin(5*\x r)});
\end{tikzpicture}
\end{center}
\caption{The generic situation described in paragraph \ref{sec:setting}: fixing $(x,p)\in \R^2\times \R^2$, the function $y\mapsto H(x,y,p)$ is continuous, constant in the white region and $1$-periodic with respect to $y_1$ in the light grey region. The set $\overline \Omega$ is the union of the grey regions. The sinusoidal graph is just meant to symbolize the fact that $y\mapsto H(x,y,p)$ is periodic with respect to $y_1$ in the lighter grey region. Note also that $H_{1,\per}$  is defined in the full space $(\R^2)^3$ and that $H_{1,\per}(x,y,p)= \overline{H}(x,p)$ if $|y_2|\ge  R_0$.}
  \label{fig:1}
\end{figure}

% \begin{figure}[H]
%   \begin{center}
% \begin{tikzpicture}
%   \path[fill=black!6, draw,line width=0.8pt] (4,-2)  arc (-90:90:2)  -- (4,-2);
%   \path[draw,dashed](6,0)  arc (0:360:2)  ;
%   \filldraw [fill=black!2] (-4,-2) rectangle (4,2);
%   \draw (-3,2.5) node[right] {{$H(x,y,p)=  H_{1,\per}(x,y,p) =\overline{H}(x,p)$}};
%   \draw (-3,1) node[right] {{$H(x,y,p)= H_{1,\per}(x,y,p)\not= \overline{H}(x,p) $}};
%              \draw (5,0) node[above] {{ $\begin{array}[c]{c} H(x,y,p) \\ \not= \\ H_{1,\per}(x,y,p)\end{array} $}};
%   \draw[->, dashed] (4,-3) --(4,3);
%   \draw[->,dashed] (-4,0) --(7,0);
%   \draw (4,3) node[right] {{$y_2$}};
%   \draw (7,0) node[above] {{$y_1$}};
%   \draw[->, dashed] (-4.1,0) --(-4.1,2);
%   \draw[->, dashed] (-4.1,0) --(-4.1,-2);
%   \draw[->,dashed] (4,0)--( 5.414,-1.414) ;
%   \draw ( 5.6,-1.6)  node[below] {{$R_0$}};
%    \draw (-5,0) node[right] {{$2R_0$}};
% \end{tikzpicture}
% \end{center}
% \caption{The generic situation described in paragraph \ref{sec:setting}: the function $y\mapsto H(x,y,p)$ is continuous, constant in the white region and $1$-periodic with respect to $y_1$ in the light grey region. The set $\overline \Omega$ is the union of the grey regions.}
%   \label{fig:1b}
% \end{figure}

\begin{remark}
  % Note that for our homogenization problem, it is in fact not necessary to define  $H_{1,\per} (x,y,p)$ for $x\notin (-\infty, 0]\times \{0}$.
  Examples fullfilling the assumptions above can be constructed by suitably choosing $f$ and $\ell$  in the additive form 
  $f(x,y,a)=\bar f (x,a) + f_1(y, a)$, $\ell(x,y,a)=\bar \ell (x,a) + \ell_1(y, a)$, where $\bar f: \R^2\times A\to \R^2$, $\bar \ell : \R^2\times A\to \R$,
  $f_1:  \R^2\times A\to \R^2$ and $\ell_1: \R^2\times A\to \R$ are smooth,
$f_1$ and $\ell_1$  vanish outside $\overline \Omega \times A$ and coincide on $\Bigl( (-\infty,0]\times \R\Bigr) \times A$ with functions that are $1$-periodic w.r.t. $y_1$.
  \end{remark}

\begin{remark}\label{rem:1}
 Our setting includes the simpler case in which  $f_{1,\per}$ and $\ell_{1,\per}$  do not depend on $y_1$.
\end{remark}

\begin{remark}\label{rem:0}
  In the present setting, the region of the state space in which the Hamiltonian $H_\varepsilon$ has fast variations is contained in  $\Omega_\varepsilon$.
\end{remark}

Let $\alpha$ be a positive discount factor.
 It is well known, see e.g. \cite{MR1484411}, that the value function $u_\varepsilon$ of the optimal control problem:
\begin{equation}\label{eq:12}
  u_\varepsilon(x)= \inf  \int_0 ^\infty    e^{-\alpha t} \ell_\varepsilon \left( z(t), a(t)\right) dt \quad  \hbox{subject to } \left\{
    \begin{array}[c]{l}
      \ds        z(t)= x+\int_0 ^t f_\varepsilon\left (z(\tau) ,a(\tau)\right) d\tau,\\
        a\in L^\infty(\R_+)\\
        a(t)\in A, \; \hbox{for almost } t\ge 0
    \end{array}
      \right.
\end{equation}
is the unique viscosity solution in $\rm{BUC}(\R^2)$ of 
\begin{equation}
  \label{eq:13}
\alpha \, u_\varepsilon+ H_\varepsilon\left( x, D u_\varepsilon\right)=0\quad \hbox{ in }\R^2.
\end{equation}
Our goal is to study the asymptotic behaviour of $u_\varepsilon$ as $\varepsilon\to 0$. 

\subsection{Main result}
\label{sec:main_result}

\begin{definition}[A flat stratification of $\R^2$] \label{def:1}
%  Let $\Gamma$ be the half-line  $\Gamma=$, which is relatively open in $\R^2$.
  The following partition  of $\R^2 $ arises naturally when we let $\varepsilon$  vanish:
  \begin{eqnarray}
    \label{eq:14}
    \R^2=\cM_2\cup\cM_1\cup\cM_0,\\
    \label{eq:15}    \cM_{0}=\{0\},\quad   \cM_{1}=(-\infty,0)\times \{0\},\quad
    \cM_{2}=\R^ 2\setminus \left(  \cM_{0} \cup  \cM_{1} \right).
  \end{eqnarray}
The submanifolds $ \cM_{0}$,  $\cM_{1}$ and $\cM_2$
are disjoint and 
  are respectively of dimension $0$, $1$ and $2$.  Clearly $\cM_2$ is an open subset of $\R^2$. In fact,  $ \cM_{0}$,  $\cM_{1}$ and $\cM_2$ form  an admissible  flat Whitney stratification of $\R^2$, see for example \cite[Definition 2.3]{MR4704058}.
\end{definition}
Our main result is the following:
\begin{theorem}
  \label{sec:main-result-1}
  We consider the solution $u_\varepsilon$ of (\ref{eq:13}), in the setting of  Subsection \ref{sec:setting}.
  As $\varepsilon\to 0$, the family $u_\varepsilon$ converges locally uniformly to
  a bounded and Lipschitz continuous function $u$ defined on $\R^2$, which is
  the unique solution to the following stratified problem
  associated to the admissible flat stratification of $\R^2$ defined in \eqref{eq:15}:
  \begin{enumerate}
  \item $u$ is a viscosity solution of
    \begin{equation}
      \label{eq:16}
      \alpha \, u+\overline H(\cdot,Du)= 0\quad  \hbox{in }\cM_2.
    \end{equation}
  \item
    \begin{enumerate}
    \item If  $\phi\in C^1(\R^2)$ is such that $u-\phi$ has a local minimum at $x\in \cM_1$, then
      \begin{equation}\label{eq:17}
        \alpha \, u(x)+\max\left( \HoneT(x, \partial_{x_1} \phi(x)),  \overline H(x,D\phi(x))\right)\ge 0.
      \end{equation}
      where $\HoneT: \overline{\cM_1}\times \R\ \to \R$ is the effective tangential Hamiltonian defined in Section \ref{sec:proof_2} below.
    \item  If  $\phi\in C^1(\cM_1)$ is such that $u-\phi$ has a local maximum at $x\in \cM_1$, then
      \begin{equation} \label{eq:18}
        \alpha \, u(x)+ \HoneT(x, \partial_{x_1} \phi(x)) \le 0.
      \end{equation}
    \end{enumerate}
  \item
    \begin{enumerate}
    \item
      If  $\phi\in C^1(\R^2)$ is such that $u-\phi$ has a local minimum at $0$, then
      \begin{equation}\label{eq:19}
        \alpha \, u(0)+\max\left( E,\HoneT(0, \partial_{x_1} \phi(0)),  \overline H(0,D\phi(0))\right)\ge 0,
      \end{equation}
      where $E$ is the  effective Dirichlet datum  defined in Section~\ref{sec:ergod-const-glob} below.
    \item
      \begin{equation}\label{eq:20}
        \alpha \, u(0)+E \le  0.
      \end{equation}
    \end{enumerate}
  \end{enumerate}
\end{theorem}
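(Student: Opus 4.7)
I would follow the standard half-relaxed-limits / perturbed test function method of homogenization, adapted to the stratified framework of \cite{MR4704058}.

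\medskip

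\emph{Step 1 (uniform estimates and compactness).} The bound $|\ell|\le M_\ell$ gives $\|u_\varepsilon\|_\infty\le M_\ell/\alpha$. Combining the strong controllability hypothesis ($B_{r_f}(0)\subset \{f(x,y,a):a\in A\}$) with the coercivity \eqref{eq:8} yields, by a standard dynamic-programming-type argument, a Lipschitz bound on $u_\varepsilon$ that is uniform in $\varepsilon$. Up to extraction, $u_\varepsilon$ converges locally uniformly to a bounded Lipschitz function $u$; equivalently the half-relaxed limits $\overline{u}=\limsup{}^{*} u_\varepsilon$ and $\underline{u}=\liminf{}_{*} u_\varepsilon$ are both Lipschitz and satisfy $\underline{u}\le\overline{u}$.

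\medskip

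\emph{Step 2 (equation on $\cM_2$).} For any $x\in\cM_2$ there is a neighborhood $V$ of $x$ and $\varepsilon_0>0$ such that $y/\varepsilon\notin \Omega$ for every $y\in V$ and every $\varepsilon<\varepsilon_0$, because $\Omega_\varepsilon=\varepsilon\Omega$ shrinks onto the closed half-line $\overline{\cM_1}\cup\cM_0$. By \eqref{eq:11}, $H_\varepsilon(\cdot,p)=\overline H(\cdot,p)$ on $V$, so the usual stability of viscosity solutions yields \eqref{eq:16} for both $\overline u$ and $\underline u$.

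\medskip

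\emph{Step 3 (conditions on $\cM_1$).} Fix $x^*=(x_1^*,0)$ with $x_1^*<0$. Following the strategy of \cite{MR3565416,MR3912640}, I would build a family of tangential correctors $\chiperp$ for the frozen Hamiltonian $H_{1,\per}(x^*,\cdot,\cdot)$, defined on the whole plane, $1$-periodic in $y_1$, bounded as $|y_2|\to\infty$, and satisfying in the viscosity sense
\[
H_{1,\per}\!\left(x^*,y,\,p\,e_1+D_y\chiperp(y)\right)=\HoneT(x^*,p)
\]
for an effective tangential Hamiltonian $\HoneT$. With this in hand I use the perturbed test function
\[
\phi_\varepsilon(x)=\phi(x)+\varepsilon\,\chiperp\!\left(\frac{x}{\varepsilon}\right),\qquad p=\partial_{x_1}\phi(x^*),
\]
and analyze where $u_\varepsilon-\phi_\varepsilon$ attains its extremum. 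If it lies inside $\Omega_\varepsilon$ one recovers an inequality in terms of $\HoneT$, while if it lies outside $\Omega_\varepsilon$ (possible only for the subsolution side, since supersolution tests are restricted to $\phi\in C^1(\cM_1)$ i.e.\ only the tangential derivative matters) one recovers an inequality in terms of $\overline H$. This yields the max in \eqref{eq:17} and the single term in \eqref{eq:18}.

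\medskip

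\emph{Step 4 (condition at the vertex).} This is where the main difficulty lies, and is the analogue, for a codimension-two vertex, of the one-point construction in \cite{MR4643677}. I would construct a ``global'' corrector $\chi_0$ on $\R^2$ associated with $H(0,\cdot,\cdot)$, whose ergodic constant is precisely the effective Dirichlet datum $E$. The point is that $\chi_0$ must be sublinear at infinity and, crucially, must \emph{match} the tangential corrector $\chiperp(\cdot)$ in the tubular neighborhood of $\cM_1$ as $y_1\to-\infty$, as well as reduce to the trivial corrector (a constant) in the directions pointing into $\cM_2$. Existence of such a corrector requires a delicate ergodic-type argument on the unbounded domain, combining the one-dimensional corrector on the periodic strip with a localized correction near the origin; once available, the same perturbed test function technique as in Step~3, now using $\chi_0$ instead of $\chiperp$, gives \eqref{eq:19}-\eqref{eq:20}.

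\medskip

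\emph{Step 5 (uniqueness).} The limit problem is a stratified problem for the flat Whitney stratification of Definition \ref{def:1}, so the Barles--Chasseigne comparison principle (\cite[Chapter 22 ff.]{MR4704058}) applies and gives $\overline u\le\underline u$. Combined with Step 1 this forces $\overline u=\underline u=u$ and the convergence of the entire family, identifying the limit as the unique stratified solution.

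\medskip

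The principal obstacle is Step 4: unlike the periodic cell problem, the corrector at the vertex lives on an unbounded plane with a Hamiltonian that is neither periodic nor uniformly coercive in $y$ in a useful ergodic sense, and the ``compatibility'' that $\chi_0$ asymptotically coincides with $\chiperp$ along the half-line is what ensures that the various effective Hamiltonians $\overline H$, $\HoneT$, and the datum $E$ fit together into a single stratified problem rather than producing incompatible boundary layers.
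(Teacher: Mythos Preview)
Your overall architecture---uniform estimates, half-relaxed limits, perturbed test functions on each stratum, then the stratified comparison principle---is exactly the paper's. The substantive difference is in Step~4, which you rightly flag as the crux.

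The paper does \emph{not} build a single global corrector $\chi_0$ realizing $E$ and asymptotically matching the tangential corrector along the half-line; that matching problem is hard, and no sublinearity is available (the corrector $w$ the paper does obtain from $H(0,y,Dw)=E$ is only Lipschitz). Instead the paper decouples the two inequalities at the origin. For the subsolution bound \eqref{eq:20}, $E$ is defined as the limit of state-constrained ergodic constants $E^R$ on balls $B_R(0)$, and the \emph{truncated} corrector $w^R$ is used directly as a perturbed test function on $\overline{B_{\varepsilon R}(0)}$, with the state-constrained comparison principle replacing any growth condition at infinity. For the supersolution bound \eqref{eq:19}, the paper exploits that only a \emph{subcorrector} is needed: given $p=D\phi(0)$, one builds a Lipschitz $\chi$ with $\chi(y)\le p\cdot y$ and $H(0,y,D\chi)\le \lambda$, where $\lambda$ is whichever of $\overline H(0,p)$, $\HoneT(0,p_1)$, $E$ dominates. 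This $\chi$ is obtained, case by case, as a \emph{minimum} of several pieces---$w-C$, a shifted tangential corrector $q_1 y_1+\xi(0,q_1,y)-c$, and one or two linear functions---with the constants chosen so that on each region of~$\R^2$ only pieces that are legitimate subsolutions there can be active; convexity of $H$ in $p$ then makes the minimum a viscosity subsolution. This patching-by-minimum is the key device and entirely replaces the global matching you propose.

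Two minor corrections to Step~3: you have the roles reversed (the \emph{subsolution} test \eqref{eq:18} is the one restricted to $\phi\in C^1(\cM_1)$, the supersolution test \eqref{eq:17} takes $\phi\in C^1(\R^2)$); and the tangential correctors $\xi$ are Lipschitz but not bounded in $y_2$---they have linear growth with slopes $\overline\Pi,\underline\Pi$ determined by $\overline H$ (cf.\ Propositions~\ref{cor:slopes_omega} and~\ref{cor:control_slopes_W}), which is in fact what makes the minimum construction above work.
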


Note that Theorem  \ref{sec:main-result-1} is reminiscent of  \cite[Th. 1.1] {MR4643677} that deals with the homogenization of  a class of stationary Hamilton-Jacobi equations in  which the  Hamiltonian is obtained by perturbing  near the origin an otherwise periodic Hamiltonian. In \cite[Th. 1.1]{MR4643677}, the limiting problem consists of a Hamilton-Jacobi equation outside the origin, with the same effective Hamiltonian as in periodic homogenization, supplemented at the origin with an effective  Dirichlet condition  that keeps track of the perturbation. In the present case, the limiting problem presents singularities both on $\cM_1$ and at the origin, because before homogenization, the fast variations of the Hamiltonian are localized around $\overline \cM_1$.  The origin plays a particular role as the endpoint of $\cM_1$. Theorem  \ref{sec:main-result-1} can be seen as recursive version of  \cite[Th. 1.1] {MR4643677}, in terms of the dimensions of the involved submanifolds.

\begin{definition}\label{def:2}
  An upper semi-continuous function $u:\R^2\to \R$  which is a viscosity subsolution of \eqref{eq:16} in $\cM_2$ and satisfies  \eqref{eq:18} and  \eqref{eq:20} is named a weak viscosity subsolution of the stratified problem or {\sl weak stratified subsolution}, see  \cite[Definition 19.1]{MR4704058}.

  A lower semi-continuous function $u:\R^2\to \R$  which is a viscosity supersolution of \eqref{eq:16} in $\cM_2$ and satisfies  \eqref{eq:17} and  \eqref{eq:19} is named a supersolution of the stratified problem. In other words, $u$ is a supersolution in the sense of Ishii of the Hamilton-Jacobi equation whose Hamiltonian is discontinuous and coincides with $(x,p)\mapsto \overline H (x,p)$ at $x\in \cM_2$, with $(x,p)\mapsto \HoneT(x,p_1)$ at $x\in \cM_1$ and with $E$ at $x=0$,  see  \cite[Definition 2.1]{MR4704058}.

  The function $u$ arising in Theorem \ref{sec:main-result-1} is named a {\sl weak stratified solution} because it is both a weak stratified subsolution and
   supersolution of the stratified problem, see  \cite[Definition 19.1]{MR4704058}.
 \end{definition}

 \begin{remark} \label{rem:2}
  Consider a  weak subsolution $u$ of the stratified problem that is Lipschitz continuous in $\R^2$.
  % The function $ u$ arising in Theorem \ref{sec:main-result-1}  is Lipschitz continuous in $\R^2$. Hence,
Since $u$ is a viscosity subsolution of $\alpha u+ \overline H (\cdot,Du )\le 0$ in $\cM_2$, it satisfies $\alpha  u(x)+ \overline H (x,D u(x) )\le 0$ at almost every $x\in \R^2$, see \cite[Prop. 1.9, Chapter I, page 31, and its proof]{MR1484411}. 
But $\overline H$ is continuous in $\R^2\times \R^2$ and convex w.r.t. its second argument. Therefore, from \cite[Prop. 5.1, Chapter II, page 77]{MR1484411}, $u$ is a viscosity subsolution of $\alpha v+ \overline H (\cdot,Dv )\le 0$ in the whole space $\R^2$.

As a consequence, the function $u$ arising in Theorem \ref{sec:main-result-1} is  a viscosity subsolution of $\alpha v+ \overline H (\cdot,Dv )\le 0$ in the whole space $\R^2$.
% Similarly, it can be proved that $u$ is a viscosity subsolution of
% $\alpha v+ H_1 (\cdot,\partial_x_1 v )\le 0$ in $\overline {\cM_1 }$.
\end{remark}

\section{Proof of Theorem~\ref{sec:main-result-1}}
\label{sec:proof}
\subsection{Known facts}
\label{sec:proof_1}
Recall that our goal is to understand the asymptotic behaviour of $u_\varepsilon$ as $\varepsilon$ vanishes. 
First, using  either comparison principles, see for example \cite[Chapter II, Theorem 3.5]{MR1484411} or arguments from the theory of optimal control, we see  that
\begin{displaymath}
-M_\ell\le -\max_{y\in \R^2} H_\varepsilon(y,0) \le \alpha \, u_\varepsilon(x) \le -\min_{y\in \R^2} H_\varepsilon(y,0) \le M_\ell.  
\end{displaymath}
 From this estimate and \eqref{eq:13}, we infer from the coercivity of the Hamiltonian  that $u_\varepsilon$ is Lipschitz continuous in $\R^2$ with a Lipschitz constant independent of $\varepsilon$.

 In order to study the asymptotic behaviour of $u_\varepsilon$, we consider 
 \begin{eqnarray}
   \label{eq:21}
 \overline{u}(x)=\limsup_{\varepsilon\to 0}u_\varepsilon(x),
\\
\label{eq:22}
 \underline{u}(x)=\liminf_{\varepsilon\to 0}u_\varepsilon(x).
 \end{eqnarray}
 Note that, from the observation above on the regularity of $u_\varepsilon$,
 $ \overline{u}$ and  $\underline{u}$ coincide respectively with the half-relaxed semi-limits  $\ds \limsup_{x'\to x, \varepsilon\to 0}u_\varepsilon(x')$ and $\ds \liminf_{x'\to x, \varepsilon\to 0}u_\varepsilon(x')$, that are classically  used in the homogenization of Hamilton-Jacobi equations. It is clear that the functions $ \overline{u}$ and $ \underline{u}$  are  bounded and Lipschitz continuous.
 The following fact is well known:
\begin{lemma}
\label{sec:homog-refeq:7-1}
The functions $\overline{u}$ and $\underline{u}$ are respectively a bounded subsolution and a bounded supersolution of (\ref{eq:16}) in $\cM_2$.
\end{lemma}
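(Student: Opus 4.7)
The strategy is purely local: I will show that on any neighborhood of a point $x_0\in\cM_2$, the Hamiltonian $H_\varepsilon$ coincides with the fixed Hamiltonian $\overline H$ for all $\varepsilon$ small enough, so that $u_\varepsilon$ solves the limiting equation locally. The Barles--Perthame stability of viscosity sub/supersolutions under half-relaxed limits then gives the result immediately.

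The key geometric observation is that if $x_0\in\cM_2 = \R^2\setminus\bigl((-\infty,0]\times\{0\}\bigr)$, then either $x_{0,2}\neq 0$, or else $x_{0,2}=0$ and $x_{0,1}>0$. I treat both cases and produce a ball $V=B_\rho(x_0)$ and a threshold $\varepsilon_0>0$ such that $x/\varepsilon\notin\Omega$ whenever $x\in V$ and $0<\varepsilon<\varepsilon_0$. In the first case, choose $\rho<|x_{0,2}|/2$; then $|x_2|\ge|x_{0,2}|/2$ on $V$, hence $|x_2/\varepsilon|>R_0$ for $\varepsilon<|x_{0,2}|/(2R_0)$, which forces $x/\varepsilon\notin\Omega$ by the definition~\eqref{eq:1} of $\Omega$. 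In the second case, pick $\rho<x_{0,1}/2$; then $x_1\ge x_{0,1}/2>0$ on $V$, so $x/\varepsilon$ has positive first coordinate and modulus $\ge x_{0,1}/(2\varepsilon)>R_0$ for $\varepsilon$ small, hence $x/\varepsilon$ lies neither in $(-\infty,0]\times(-R_0,R_0)$ nor in $B_{R_0}(0)$.

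Invoking assumption~\eqref{eq:11}, I conclude that for $x\in V$ and $0<\varepsilon<\varepsilon_0$,
\begin{equation*}
H_\varepsilon(x,p)=H\!\left(x,\tfrac{x}{\varepsilon},p\right)=\overline H(x,p),
\end{equation*}
so that $u_\varepsilon$ is the unique viscosity solution in $\mathrm{BUC}(V)$ of $\alpha u_\varepsilon+\overline H(x,Du_\varepsilon)=0$ in $V$. Combined with the uniform Lipschitz and uniform boundedness of $(u_\varepsilon)$ established just above the statement of the lemma, this shows that $\overline u$ and $\underline u$ agree on $V$ with the half-relaxed limits of $(u_\varepsilon)$.

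The final step is the standard stability result for viscosity solutions under half-relaxed limits (see e.g.\ \cite[Chapter V, Lemma 1.5]{MR1484411}): since $\overline H$ is continuous on $\R^2\times\R^2$ and the sequence of equations on $V$ is constant in $\varepsilon$, $\overline u$ is a viscosity subsolution and $\underline u$ a viscosity supersolution of $\alpha u+\overline H(\cdot,Du)=0$ in $V$. Since $x_0\in\cM_2$ was arbitrary and $\cM_2$ is open, this yields the sub/supersolution property in all of $\cM_2$. No real obstacle arises; the only point requiring (mild) care is the uniform selection of $\rho$ and $\varepsilon_0$, and the observation that the bounded ``defect region'' $B_{R_0}(0)$ of $\Omega$ becomes harmless in $\cM_2$ precisely because rescaling by $1/\varepsilon$ pushes $x/\varepsilon$ away from the origin.
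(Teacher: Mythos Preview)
Your argument is correct. The paper does not give a proof of this lemma at all; it simply states that the fact is well known. Your write-up supplies precisely the standard local argument one would expect: exploit that $\cM_2$ is disjoint from the closed half-line $\overline{\cM_1}$, so after rescaling $x/\varepsilon$ eventually leaves $\Omega$ and the Hamiltonian freezes to $\overline H$, then invoke Barles--Perthame stability. One cosmetic point: the phrase ``$u_\varepsilon$ is the unique viscosity solution in $\mathrm{BUC}(V)$'' overstates things, since uniqueness on $V$ would require boundary data; all you need (and all you use) is that $u_\varepsilon$ is a viscosity solution of $\alpha u_\varepsilon+\overline H(\cdot,Du_\varepsilon)=0$ in $V$.
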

\begin{remark} \label{rem:3}
  Arguing as in Remark \ref{rem:2}, we see that $\overline{u}$ is a viscosity subsolution of  $\alpha v+ \overline H (\cdot,Dv )\le 0$ in the whole space $\R^2$.
\end{remark}

\subsection{The strategy of proof  for Theorem~\ref{sec:main-result-1}}
The proof of Theorem~\ref{sec:main-result-1}  is done in five different steps that we now describe.
%Accordingly,  the remainder of Section~\ref{sec:proof} will be  cut into five  parts.
\begin{enumerate}
\item The first step (Subsection \ref{sec:proof_2}) is devoted
to obtaining the viscosity inequalities satisfied by
$\underline u$ and $\overline u$ at $x\in \cM_1$. Since $\cM_1$ is of codimension one, we may rely on the existing literature on homogenization leading to transmission boundary conditions on submanifolds of codimension one, see \cite{MR3299352, MR3565416,MR3912640, MR3441209, MR4450245}. For $p_1\in \R$ and $x\in \cM_1$, the construction of the effective tangential Hamiltonian $\HoneT(x,p_1)$ and of related correctors named $\xi(x, p_1,\cdot)$ is a key difficulty in the theory. The latter correctors  are  defined in unbounded domains because there is no periodicity w.r.t. $y_2$, and they are constructed by introducing a sequence of ergodic problems in truncated cells. The effective tangential Hamiltonian keeps  memory of the fast variations of the coefficients near $\cM_1$. From the control theoretic viewpoint, it accounts for the trajectories that remain close to $\cM_1$.
Since the proofs are similar to those contained in the above-mentioned articles, we will omit them.
  
\item The second step (Subsection \ref{sec:ergod-const-glob}) consists of constructing the ergodic constant $E$ associated to the origin and a related  corrector $w$. This part is reminiscent of the arguments of \cite{MR4643677}, in which the authors studied the homogenization of a periodic Hamilton-Jacobi equation with a defect of periodicity located near the origin.  As above, an important difficulty is that the corrector $w$  must be a function defined in the whole space $\R^2$, which makes it necessary to impose some  condition at infinity.  We will see that the latter amounts to the fact that  $w$ is the locally uniform limit as $R\to +\infty$ of a family $(w^R)_{R>0}$ of solutions of
problems with state constraints posed in the balls $B_R(0)$.  
From the optimal control theory viewpoint,  these problems,  referred to as {\sl truncated cell problems}, account for trajectories that 
remain close to the origin. 
\item In the third step (Subsection \ref{sec:function--overline}), we prove  that  the upper-limit $\overline u$ satisfies  (\ref{eq:20}), relying on
Evans' method of perturbed test-functions, see \cite{MR1007533}.  The construction of the perturbed test-function involves the above mentioned solution $w^R$ to the  truncated cell problem in the ball $B_R(0)$.
\item The fourth step (Subsection \ref{sec:function--underline}) consists of proving that the lower-limit $\underline u$ satisfies condition (\ref{eq:19}). The main idea is to construct subcorrectors by combining in a suitable way the function $w$ mentioned above and the correctors $\xi(0,p,\cdot)$ associated to the Hamiltonian $H_{1,\per}(0,\cdot)$. We take benefit of the present setting to revisit  the  proof presented in  \cite{MR4643677} and simplify it by using subcorrectors instead of correctors.
 % The proof is simpler than that written in \cite{MR4643677} (we actually observed  that the proof in \cite{MR4643677} could  indeed be simplified during the final stage of revisions before the publication of  \cite{MR4643677}, but we decided not to implement these simplifications because it meant too many changes with respect to the version that had been reviewed). The simplifications lie in the fact that it is sufficient to construct subcorrectors rather than correctors.
\item The last  step of the proof (Subsection \ref{proof_of_main_th}) mostly consists of deducing from the previously obtained results that $\underline u=\overline u$, by means of a comparison principle. Theorem~\ref{sec:main-result-1} then follows.
\end{enumerate}

\subsection{The effective Hamiltonian $\HoneT$ and the viscosity inequalities satisfied by  $\overline{u}$ and $\underline{u}$ on $\cM_1$}
\label{sec:proof_2}

In this subsection, we are going to discuss the effective problem arising on $\cM_1$ as $\varepsilon\to 0$. Because the submanifold  $\cM_1$ is of codimension one, we can rely on  results obtained in the last decade and concerning the analysis of Hamilton-Jacobi equations posed on heterogeneous structures such as networks \cite{MR3621434,MR3358634, MR3556345}, {\sl booklet}-like geometries or multidimensional junctions \cite{barles2013bellman,MR3690310, oudet2015equations}. These problems all involve Hamilton-Jacobi equations with nonlinear transmission conditions on submanifolds with codimension one. In particular, the works \cite{MR3565416} and \cite{MR3912640} were concerned with  Hamilton-Jacobi equations  in an environment consisting of two different homogeneous media separated by an oscillatory interface. The oscillations of the interface have small period and amplitude, and as the latter parameter vanishes, the interface tends to an hyperplane. At the limit when both parameters vanish, one finds on the flat interface an effective nonlinear  transmission condition  keeping memory of the previously mentioned microscopic oscillations. Similarly, \cite{MR3441209} is devoted to a family of time dependent Hamilton-Jacobi equations  on the simplest possible network composed of two half-lines with a perturbation of the Hamiltonian localized in a small region close to the junction.  Related homogenization problems with applications  to traffic flows were discussed in \cite{MR3809148,MR4450245}.
Key arguments in \cite{MR3299352, MR3565416, MR3912640, MR3441209} are the construction of families of correctors  that account for the localized perturbations of the environment and are defined in unbounded domains. The same strategy will be used here in order to obtain the effective Hamiltonian $\HoneT$.

Although  \cite{MR3565416} and \cite{MR3912640}  are focused on the homogenization of periodic interface problems while in the present case, the Hamiltonian depends continuously on the state variable,
the proofs of the results  contained in the present subsection are quite similar to those contained
in the latter references.  They  will be omitted.

\subsubsection{Construction of the
  effective Hamiltonian $\HoneT$ via truncated cell problems}
\label{effective_H_M1}
 Since we have defined $H_{1,\per}(x,\cdot,\cdot)$ at all $x\in \R^2$, we are going to construct $\HoneT(x,\cdot)$ at all $x\in \R^2$, although  for what follows,
 it would be sufficient to consider only  $x\in \overline{\cM_1}$.
 
Fix $x\in \R^2$ and $p_1\in \R$. We define  the unbounded  cell $Y=(\R/\Z) \times \R$, and, given a parameter $\rho>R_0$ that will eventually tend to $+\infty$,  the truncated cell $Y_\rho=(\R/\Z) \times (-\rho, \rho)$. The {\sl truncated cell problem} associated with $x\in \R^2$ and $p_1\in \R $ is as follows: 
\begin{eqnarray}
\label{trunc-cellp1}
    H_{1,\per}(x,y, p_1e_1+ D\xi_\rho(x,p_1,y)) &\le \lambda_\rho(x, p_1)&\hbox{ if } y\in Y_\rho,\\
   \label{trunc-cellp2} H_{1,\per}(x,y, p_1e_1+ D\xi_\rho(x,p_1,y)) &\ge \lambda_\rho( x,p_1)&\hbox{ if } y\in \overline{Y_\rho},
\end{eqnarray}
where the inequalities are understood in the sense of viscosity.
\begin{remark}\label{rem:4}
  Problem \eqref{trunc-cellp1}-\eqref{trunc-cellp2} can be seen as an ergodic problem in  $Y_\rho$ associated with state constraints on the boundaries $\{y: y_2=\pm\rho\}$.
 An equivalent formulation may be written by replacing $Y_\rho$ by $(0,1)\times (-\rho,\rho)$ and by additionally imposing  periodicity with period $1$ in the variable $y_1$.
\end{remark}

\begin{lemma}
\label{lem:existence_solution_truncated_cell_pb}
There is a unique  $\lambda_\rho(x, p_1)\in \R$ such that \eqref{trunc-cellp1}-\eqref{trunc-cellp2} admits a viscosity solution. 
For this choice of  $\lambda_\rho( x,p_1)$, there exists a  solution  $\xi_\rho(x,p_1,\cdot)$
 that is Lipschitz continuous with  Lipschitz constant
 $L$ depending on $p_1$ only (uniform in $x$ and $\rho$).
\end{lemma}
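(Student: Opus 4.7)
The plan is to follow the classical Lions--Papanicolaou--Varadhan vanishing-discount approach, adapted to the state-constrained geometry of the truncated cell $Y_\rho$, which (modulo $y_1$-periodicity) is a compact domain with state constraints on $\{y_2=\pm\rho\}$. First, for each $\delta>0$, I would solve the discounted cell problem
\begin{equation*}
\delta v_\delta(y) + H_{1,\per}\bigl(x,y,p_1 e_1+ Dv_\delta(y)\bigr)=0 \quad\text{in } Y_\rho,
\end{equation*}
understood as a viscosity subsolution on $Y_\rho$ and a state-constrained supersolution on $\overline{Y_\rho}$, with $y_1$-periodicity. Existence and uniqueness of $v_\delta\in \mathrm{BUC}(Y_\rho)$ come from Perron's method combined with the comparison principle for state-constrained viscosity solutions; both are standard consequences of the coercivity \eqref{eq:8} of $H_{1,\per}$ and of the strong inward controllability at $y_2=\pm\rho$ provided by the inclusion $B_{r_f}(0)\subset \{f(x,y,a):a\in A\}$.

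Next I would derive two uniform estimates. Comparison with the constant sub/supersolutions $\pm(M_f|p_1|+M_\ell)/\delta$ (using \eqref{eq:10} to bound $|H_{1,\per}(x,y,p_1 e_1)|$) gives
\begin{equation*}
\|\delta v_\delta\|_\infty\le M_f|p_1|+M_\ell
\end{equation*}
uniformly in $x,\rho,\delta$. Plugging this into the subsolution inequality and using the coercivity \eqref{eq:8} yields a Lipschitz bound
\begin{equation*}
|Dv_\delta|\le |p_1|+\frac{M_f|p_1|+2M_\ell}{r_f}=:L(|p_1|),
\end{equation*}
depending only on $|p_1|$ and the structural constants, uniform in $x,\rho,\delta$. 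Fixing a reference point $y^\star\in Y_\rho$ and setting $\xi_\delta:=v_\delta-v_\delta(y^\star)$, Arzel\`a--Ascoli extracts a subsequence along which $-\delta v_\delta(y^\star)\to\lambda_\rho(x,p_1)$ and $\xi_\delta\to\xi_\rho(x,p_1,\cdot)$ locally uniformly; the standard stability of viscosity solutions then yields \eqref{trunc-cellp1}--\eqref{trunc-cellp2} with the Lipschitz constant $L(|p_1|)$.

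The main obstacle is the uniqueness of $\lambda_\rho(x,p_1)$. If two admissible values $\lambda<\lambda'$ were to admit solutions $\xi,\xi'$, the strong comparison principle for convex coercive state-constrained Hamilton--Jacobi equations---applicable again thanks to the inward controllability at $y_2=\pm\rho$ and the periodicity in $y_1$---would force a contradiction via the usual doubling-of-variables argument combined with a strict-subsolution perturbation of $\xi'$. An equivalent and perhaps cleaner route is to show directly that $-\delta v_\delta\to\lambda_\rho$ uniformly on $Y_\rho$ (and not merely along subsequences): since $v_\delta$ is uniquely determined by comparison, this characterizes $\lambda_\rho(x,p_1)$ unambiguously and, by stability, pins down the effective tangential constant independently of any corrector choice. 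I expect this uniqueness step to be the only genuinely technical point, the rest of the argument being routine once the discounted problem and its uniform estimates are in place.
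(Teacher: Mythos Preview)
Your proposal is correct and follows the standard vanishing-discount approach. The paper does not give its own proof of this lemma: it explicitly states that the proofs of the results in that subsection are similar to those in \cite{MR3565416,MR3912640} and are omitted. In fact, the paper carries out exactly the procedure you describe in Subsection~\ref{sec:ergod-const-glob} for the related ergodic constant $E$ (discounted state-constrained problem, uniform bounds from comparison with constants, Lipschitz bound from coercivity, Ascoli--Arzel\`a extraction, uniqueness of the constant via the state-constrained comparison principle), so your argument is precisely in line with the paper's methodology.
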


As in \cite{MR3299352,MR3565416},  using the optimal control interpretation of \eqref{trunc-cellp1}-\eqref{trunc-cellp2}, it is easy to prove that for 
a positive $K$ that may depend on  $p_1$ but not on $x$ and $\rho$, and for 
all $R_0<\rho_1\le \rho_2$, 
\[\lambda_{\rho_1}( x,p_1)\leq \lambda_{\rho_2}(x, p_1)\leq K.\]
The effective tangential Hamiltonian $\HoneT(x,p_1)$ is defined by 
\begin{equation}\label{eq_Hone}
 \HoneT(x,p_1) =\lim_{\rho\rightarrow \infty} \lambda_{\rho}( x,p_1),\quad  \hbox{ for all } p_1\in \R.
\end{equation}
Important properties of  $\HoneT$
will be needed for obtaining comparison principle relative to  
 the effective problem in Theorem \ref{sec:main-result-1}.
 These properties are inherited from the original Hamiltonian,
 as established in the pioneering work \cite{LPV}.
\begin{lemma}
\label{lem:regularity_of_ergodic_hamiltonian}
For any $x\in \R^2$, the function $p_1\mapsto \HoneT(x,p_1)$ is convex.
There exist positive constants $L_1, C_1, c_1, m_1$ and a modulus of continuity $\omega_1$ such that,
for any $x,\tilde x \in \R^2$     $p_1, \tilde p_1\in \R$,
 \begin{eqnarray}
   \label{lem:E_lipschitz_wrt_p_2}
\mid \HoneT(x,p_1)-\HoneT(x,\tilde p_1) \mid \le L_1 |p_1-\tilde p_1|,
\\
\label{lem:E_lipschitz_wrt_z_2}
   \mid  \HoneT(x,p_1)- \HoneT(\tilde x,p_1) \mid \le C_1|p_1| |x-\tilde x|+
   \omega_1(|x-\tilde x |),
\\
 \label{lem:E_coercive}
 \frac 1 {c_1} |p_1|-m_1\le \HoneT(x,p_1)\le c_1 |p_1|+m_1.
\end{eqnarray}
\end{lemma}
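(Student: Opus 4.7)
The plan is to establish each of the four properties first for the ergodic constant $\lambda_\rho(x,p_1)$ of the truncated cell problem \eqref{trunc-cellp1}--\eqref{trunc-cellp2}, with bounds that are uniform in $\rho$, and then pass to the limit $\rho \to +\infty$ using \eqref{eq_Hone}; the monotone convergence and the uniform bounds will carry each estimate over to $\HoneT$. I will repeatedly use the fact that $\lambda_\rho(x,p_1)$ admits the sub-optimality characterization
\[
\lambda_\rho(x,p_1) = \inf\bigl\{\lambda \in \R : \exists\, \xi\in W^{1,\infty}(Y_\rho)\text{ subsolution of \eqref{trunc-cellp1} with }\lambda \hbox{ on the r.h.s.} \bigr\},
\]
a standard consequence of the uniqueness of $\lambda_\rho$ and of comparison for the stationary problem with constraints.

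For the convexity in $p_1$, fix $\theta\in(0,1)$, $p_1^0,p_1^1\in\R$ and set $p_1 = \theta p_1^0+(1-\theta)p_1^1$. If $\xi^i_\rho$ is a subcorrector for $p_1^i$ with r.h.s.\ $\lambda_\rho(x,p_1^i)$, the convexity of $H_{1,\per}(x,y,\cdot)$ (inherited from \eqref{eq:2}) yields that $\theta\xi^0_\rho + (1-\theta)\xi^1_\rho$ is a subsolution for $p_1$ with r.h.s.\ $\theta\lambda_\rho(x,p_1^0)+(1-\theta)\lambda_\rho(x,p_1^1)$, so $\lambda_\rho(x,\cdot)$ is convex. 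For the Lipschitz estimate \eqref{lem:E_lipschitz_wrt_p_2}, apply \eqref{eq:10}: any subsolution $\xi_\rho$ for $p_1$ is also a subsolution for $\tilde p_1$ with r.h.s.\ $\lambda_\rho(x,p_1) + M_f|p_1-\tilde p_1|$, giving $L_1 = M_f$ after symmetrization.

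For the continuity \eqref{lem:E_lipschitz_wrt_z_2}, the crucial ingredient is an a priori bound $|D\xi_\rho| \le \kappa(1+|p_1|)$ uniform in $x,\rho$. This is obtained by combining the subsolution property of $\xi_\rho$ with the coercivity \eqref{eq:8} and the upper bound on $\lambda_\rho$ proved next; it refines Lemma~\ref{lem:existence_solution_truncated_cell_pb} by making the dependence of $L$ on $p_1$ explicitly linear. Once this is available, \eqref{eq:9} shows that any subcorrector for $(x,p_1)$ is a subcorrector for $(\tilde x,p_1)$ with r.h.s.\ increased by $L_f|p_1 e_1 + D\xi_\rho||x-\tilde x| + \omega_\ell(|x-\tilde x|) \le C_1|p_1||x-\tilde x| + \omega_1(|x-\tilde x|)$, where the constant term $L_f\kappa|x-\tilde x|$ is absorbed in the modulus $\omega_1$.

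For the coercivity \eqref{lem:E_coercive}, the upper bound follows by testing with $\xi\equiv 0$: using \eqref{eq:10} and $|H_{1,\per}(x,y,0)|\le M_\ell$, one gets $\lambda_\rho(x,p_1)\le M_f|p_1|+M_\ell$. The lower bound relies on the strong controllability assumption $B_{r_f}(0)\subset\{f_{1,\per}(x,y,a):a\in A\}$: choosing for every $y$ a control that drives the dynamics in the direction $-\operatorname{sign}(p_1)e_1$ with speed $r_f$ produces, via the control representation of $\lambda_\rho$, the bound $\lambda_\rho(x,p_1)\ge r_f|p_1|-M_\ell$, so that $c_1=\max(M_f,1/r_f)$ and $m_1=M_\ell$ will do. The main obstacle will be the uniform-in-$(x,\rho)$ linear-in-$|p_1|$ gradient bound underlying \eqref{lem:E_lipschitz_wrt_z_2}, since it requires intertwining coercivity with the ergodic upper bound; once this is secured, all other properties follow from soft viscosity arguments and pass to the limit as $\rho\to+\infty$.
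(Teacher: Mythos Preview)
Your approach is correct and is precisely the standard route---working at the level of the truncated cell problems, using the sub-optimality characterization of $\lambda_\rho$, and passing to the limit $\rho\to\infty$---that the paper alludes to when it says these properties are ``inherited from the original Hamiltonian, as established in the pioneering work \cite{LPV}'' and omits the proof as similar to those in \cite{MR3565416,MR3912640}. Two small remarks: for the lower bound in \eqref{lem:E_coercive} you can bypass the control representation by exploiting the $y_1$-periodicity of $\xi_\rho$, namely $\int_0^1 \partial_{y_1}\xi_\rho(y_1,y_2)\,dy_1=0$ combined with the a.e.\ bound $|p_1+\partial_{y_1}\xi_\rho|\le (\lambda_\rho+M_\ell)/r_f$ from \eqref{eq:8}; and the convexity step implicitly uses that Lipschitz viscosity subsolutions of a convex Hamiltonian are a.e.\ subsolutions and conversely, which is exactly the mechanism of \cite[Prop.~5.1, Chapter~II]{MR1484411} invoked elsewhere in the paper.
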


\subsubsection{Correctors}
\label{correctors_and_cell_pb}
The following theorem is proved  in the same way as Theorem 4.8 in \cite{MR3565416}.
\begin{theorem}
\label{thm:stability_from_truncated_cell_pb_to_global_cell_pb}
Let $\xi_\rho( x,p_1,\cdot)$ be a sequence of uniformly Lipschitz continuous solutions of the truncated cell-problem \eqref{trunc-cellp1}-\eqref{trunc-cellp2} that converges to $ \xi( x,p_1,\cdot)$
locally uniformly in $Y$. Then  $ \xi(x,p_1,\cdot)$ is a Lipschitz continuous viscosity solution of the following equation posed in $Y$:
\begin{equation}
   \label{cellpE} H_{1,\per}(x,y, p_1e_1+ D \xi(x,p_1,y)) = \HoneT(x, p_1),
 \end{equation}
which, we recall, means that $\xi$ is $1$-periodic with respect to $y_1$ and satisfies 
 \eqref{cellpE} in the viscosity sense at all $y\in \R^2$.

\end{theorem}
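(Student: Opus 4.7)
The plan is to obtain the conclusion by a standard stability argument for viscosity solutions under locally uniform limits, taking advantage of the fact that the truncation boundaries $\{y_2=\pm\rho\}$ recede to infinity so that every point of $Y$ eventually lies strictly inside the truncated cell.

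First I would record the basic structural properties passed to the limit. Since each $\xi_\rho(x,p_1,\cdot)$ is Lipschitz with the same constant $L=L(p_1)$ provided by Lemma \ref{lem:existence_solution_truncated_cell_pb}, the limit $\xi(x,p_1,\cdot)$ inherits the Lipschitz bound. Each $\xi_\rho$ is $1$-periodic with respect to $y_1$ (since it lives on $Y_\rho=(\R/\Z)\times(-\rho,\rho)$), hence so is $\xi$. By definition, $\lambda_\rho(x,p_1)\to \HoneT(x,p_1)$ as $\rho\to\infty$.

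Next I would establish the viscosity inequalities in $Y$. Fix $y_0\in Y$ and a smooth test function $\phi$. Pick $\rho_0>R_0$ with $|y_{0,2}|<\rho_0-1$; then for every $\rho\ge\rho_0$, the point $y_0$ is an interior point of $Y_\rho$, so the subsolution inequality \eqref{trunc-cellp1} of $\xi_\rho$ and the supersolution inequality \eqref{trunc-cellp2} hold at $y_0$ in the classical viscosity sense. The standard stability of viscosity solutions under locally uniform convergence, combined with the continuity of $H_{1,\per}$ and with $\lambda_\rho\to\HoneT$, then yields that $\xi(x,p_1,\cdot)$ is both a viscosity subsolution and a viscosity supersolution of \eqref{cellpE} at $y_0$. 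Since $y_0\in Y$ was arbitrary, $\xi(x,p_1,\cdot)$ solves \eqref{cellpE} everywhere in $Y$ in the viscosity sense; the periodicity in $y_1$ then allows one to regard this as an equation on all of $\R^2$.

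The step I expect to demand the most care is verifying that the supersolution property survives at points $y_0$ whose $y_2$-coordinate is of order $\rho$ but still inside $Y_\rho$ for the index along which local maxima of $\xi-\phi$ occur; however, by a standard argument (perturb $\phi$ to $\phi+\eta|y-y_0|^2$ so that the $\xi_\rho-\phi$ have strict local maxima $y_\rho$ converging to $y_0$, and observe that for $\rho$ large $y_\rho$ lies in the interior of $Y_\rho$), one recovers the inequality at $y_0$. The remainder is entirely routine, and indeed the argument parallels the proof of Theorem 4.8 in \cite{MR3565416} cited by the authors.
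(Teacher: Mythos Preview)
Your proposal is correct and follows precisely the standard stability argument that the paper defers to \cite{MR3565416}: the truncation boundaries recede, every fixed $y_0\in Y$ eventually lies well inside $Y_\rho$, and the usual perturbed-test-function argument forces the approximate extrema $y_\rho\to y_0$ to stay interior, so both viscosity inequalities pass to the limit together with $\lambda_\rho\to\HoneT$. The only quibble is the phrasing in your last paragraph about ``points $y_0$ whose $y_2$-coordinate is of order $\rho$'': since $y_0$ is fixed before $\rho\to\infty$, this cannot occur; what you actually mean (and correctly handle) is that the approximate extrema $y_\rho$ might a priori drift toward $\partial Y_\rho$, and the strict-extremum perturbation rules this out.
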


 By subtracting  $\xi(x,p_1, 0)$ to $\xi_\rho(x,p_1,\cdot)$ and $\xi(x,p_1,\cdot)$, we may  assume that
 \begin{equation}
    \label{cellpE2}
\xi(x,p_1,0)=0.   
\end{equation}
Hereafter, we will always consider that $\xi(x,p_1,\cdot)$ satisfies  \eqref{cellpE} and 
 \eqref{cellpE2}.

For $\varepsilon>0$,  let us set $\Xi_\varepsilon(x,p_1,y)=\varepsilon\xi( x,p_1,\frac {y} {\varepsilon})$. The following result can be proved in the same way as \cite[Theorem 4.6,iii]{MR3441209}:
 \begin{lemma}
\label{lem:rescaling_omega} 
 For any $(x,p_1)\in \R^2\times \R$,   
there exists a sequence    $\varepsilon_n$ of positive numbers tending to $0$ as $n\to +\infty$  such that  $\Xi_{\varepsilon_n}  (x,p_1,\cdot)$ converges locally uniformly to a 
Lipschitz  function $y\mapsto \Xi( x,p_1, y)$ (the Lipschitz constant does not depend on $x$). %\eqref{eq:v_R^rho_L_lipschitz}. 
This function is constant with respect to $y_1$ and satisfies $\Xi( x,p_1,0)=0$.  Finally, $y\mapsto \Xi(x,p_1,y)$   is  a viscosity solution  of 
\begin{equation}
\label{W}
\overline H\left(x,\frac {d\Xi }{dy_2}((x,p_1,y) e_2+p_1e_1\right)=\HoneT(x,p_1)
\end{equation}
at all $y\in \R^2$ such that  $y_2\not=0$.
\end{lemma}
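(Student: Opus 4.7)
The plan is to combine a standard compactness argument with the key structural observation that the fast periodicity in $y_1$ is averaged out by the rescaling, while the equation for $\Xi_\varepsilon$ reduces to one involving $\overline H$ in the complement of $\{y_2=0\}$.

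First I would establish compactness. By Lemma \ref{lem:existence_solution_truncated_cell_pb} (passed to the global cell-problem limit via Theorem \ref{thm:stability_from_truncated_cell_pb_to_global_cell_pb}), the corrector $\xi(x,p_1,\cdot)$ is Lipschitz continuous in $Y$ with a constant $L$ depending only on $p_1$. Since $\Xi_\varepsilon(x,p_1,y)=\varepsilon\,\xi(x,p_1,y/\varepsilon)$, the family $\{\Xi_\varepsilon(x,p_1,\cdot)\}_{\varepsilon>0}$ is equi-Lipschitz with the same constant $L$, and $\Xi_\varepsilon(x,p_1,0)=\varepsilon\,\xi(x,p_1,0)=0$ by the normalization \eqref{cellpE2}. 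Hence $|\Xi_\varepsilon(x,p_1,y)|\le L|y|$, and Arzelà--Ascoli yields a sequence $\varepsilon_n\downarrow 0$ along which $\Xi_{\varepsilon_n}(x,p_1,\cdot)$ converges locally uniformly to a Lipschitz function $\Xi(x,p_1,\cdot)$ with Lipschitz constant $L$ independent of $x$, satisfying $\Xi(x,p_1,0)=0$.

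Next I would show that $\Xi(x,p_1,\cdot)$ is constant with respect to $y_1$. The point is that $\xi(x,p_1,\cdot)$ is $1$-periodic in $y_1$, so for any $y_1,y_1'\in\R$ and any $y_2$,
\begin{equation*}
\bigl|\xi(x,p_1,y_1/\varepsilon,y_2/\varepsilon)-\xi(x,p_1,y_1'/\varepsilon,y_2/\varepsilon)\bigr|\le L\,d_{\R/\Z}(y_1/\varepsilon,y_1'/\varepsilon)\le L/2,
\end{equation*}
so that $|\Xi_\varepsilon(x,p_1,y_1,y_2)-\Xi_\varepsilon(x,p_1,y_1',y_2)|\le \varepsilon L/2$. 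Passing to the limit along the subsequence, $\Xi(x,p_1,y_1,y_2)=\Xi(x,p_1,y_1',y_2)$ for all $y_1,y_1'$, so $\Xi$ depends only on $y_2$; set $\Xi(x,p_1,y)=\tilde\Xi(x,p_1,y_2)$.

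Finally, I would derive the limit PDE. Rescaling the equation \eqref{cellpE} satisfied by $\xi$, the function $\Xi_\varepsilon$ is a viscosity solution of
\begin{equation*}
H_{1,\per}\!\bigl(x,y/\varepsilon,\;p_1 e_1+D_y\Xi_\varepsilon(x,p_1,y)\bigr)=\HoneT(x,p_1) \quad\text{in }\R^2.
\end{equation*}
Fix $\bar y\in\R^2$ with $\bar y_2\neq 0$, and pick a neighborhood $V$ of $\bar y$ such that $\inf_{y\in V}|y_2|>0$. For $\varepsilon_n$ small enough, $|y_2/\varepsilon_n|>R_0$ throughout $V$, so by property (c) of $H_{1,\per}$ the Hamiltonian reduces to $\overline H(x,\cdot)$ in $V$, and $\Xi_{\varepsilon_n}(x,p_1,\cdot)$ solves
\begin{equation*}
\overline H\bigl(x,\;p_1 e_1+D_y\Xi_{\varepsilon_n}(x,p_1,y)\bigr)=\HoneT(x,p_1)\quad\text{in }V.
\end{equation*}
Since $\overline H$ is continuous, the standard stability of viscosity solutions under locally uniform convergence transfers the equation to $\Xi$. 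Because $\Xi(x,p_1,y)=\tilde\Xi(x,p_1,y_2)$ depends only on $y_2$, its distributional gradient reduces to $(d\tilde\Xi/dy_2)e_2$, which yields the desired equation \eqref{W} at every $y\in\R^2$ with $y_2\neq 0$.

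The only genuinely delicate point is the last paragraph: one must be careful that the set on which $H_{1,\per}(x,y/\varepsilon,\cdot)$ agrees with $\overline H(x,\cdot)$ is $\{|y_2|>\varepsilon R_0\}$, which excludes the axis $\{y_2=0\}$ in the limit, so the resulting equation genuinely holds only outside this axis. This is exactly why \eqref{W} is stated away from $y_2=0$ and not on all of $\R^2$; no corresponding information can be deduced at $y_2=0$ by this method.
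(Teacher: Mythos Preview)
Your proof is correct and follows the standard route that the paper defers to \cite[Theorem 4.6,iii]{MR3441209}: equi-Lipschitz compactness from the corrector bounds, elimination of the $y_1$-dependence via the $1$-periodicity of $\xi$ in $y_1$, and passage to the limit in the rescaled cell problem using that $H_{1,\per}(x,y/\varepsilon,\cdot)=\overline H(x,\cdot)$ once $|y_2|>\varepsilon R_0$. Your closing remark correctly isolates why the equation \eqref{W} is only asserted on $\{y_2\neq 0\}$.
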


As a consequence of Lemma~\ref{lem:rescaling_omega}, 
 $\HoneT(x,p_1)$ is bounded from below by a quantity depending on $\overline H(x,\cdot)$:
 \begin{corollary}
\label{cor:E_bigger_than_E_0}
For any  $(x,p_1)\in \R^2 \times \R$,
\begin{equation}
\label{eq:cor:E_bigger_than_E_0}
\HoneT(x,p_1) \geq  \min_{q\in \R}\overline H(x,p_1e_1+qe_2).
\end{equation}
\end{corollary}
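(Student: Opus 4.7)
The plan is to extract the conclusion directly from the function $\Xi(x,p_1,\cdot)$ produced by Lemma~\ref{lem:rescaling_omega}, using the fact that $\Xi$ is constant with respect to $y_1$ and Lipschitz with respect to $y_2$, and hence differentiable almost everywhere in $y_2$. At any point where $\Xi$ is differentiable and satisfies equation~\eqref{W} classically, the gradient realizes a value of $\overline H(x,p_1 e_1 + q e_2)$ equal to $\HoneT(x,p_1)$, which immediately yields the claimed inequality.

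More precisely, I would first invoke Lemma~\ref{lem:rescaling_omega} to write $\Xi(x,p_1,y) = \zeta(x,p_1,y_2)$ for some Lipschitz continuous function $\zeta(x,p_1,\cdot)$ on $\R$, with Lipschitz constant uniform in $x$. By Rademacher's theorem applied in one variable, $\zeta(x,p_1,\cdot)$ is differentiable at almost every $y_2 \in \R$; in particular the set of points of differentiability $y_2 \ne 0$ is nonempty. Fix such a point $y_2^\ast$.

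At $y_2^\ast$, the function $\Xi$ admits both a $C^1$ test function touching from above and from below (namely the affine function $y \mapsto \zeta(x,p_1,y_2^\ast) + \zeta'(x,p_1,y_2^\ast)(y_2 - y_2^\ast)$), so the viscosity sub- and supersolution inequalities provided by Lemma~\ref{lem:rescaling_omega} collapse to the classical equality
\begin{equation*}
\overline H\bigl(x,\, p_1 e_1 + \zeta'(x,p_1,y_2^\ast)\, e_2\bigr) = \HoneT(x,p_1).
\end{equation*}
Setting $q^\ast = \zeta'(x,p_1,y_2^\ast) \in \R$, this gives
\begin{equation*}
\min_{q\in \R} \overline H(x,\, p_1 e_1 + q e_2) \le \overline H(x,\, p_1 e_1 + q^\ast e_2) = \HoneT(x,p_1),
\end{equation*}
which is exactly \eqref{eq:cor:E_bigger_than_E_0}.

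The argument is essentially routine once Lemma~\ref{lem:rescaling_omega} is available; the only point that deserves a moment of care is verifying that viscosity (sub)solution inequalities reduce to classical equalities at points of differentiability, which is standard for Lipschitz viscosity solutions with a continuous Hamiltonian. No real obstacle is expected, since the hard work (construction of the truncated-cell correctors, passage to the limit $\rho \to \infty$, and then the rescaling $\varepsilon \to 0$) is already encoded in Lemmas~\ref{lem:existence_solution_truncated_cell_pb} and~\ref{lem:rescaling_omega}.
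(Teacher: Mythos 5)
Your argument is correct and is essentially the paper's own (implicit) proof: the corollary is stated there as an immediate consequence of Lemma~\ref{lem:rescaling_omega}, which is exactly the route you take, namely using the $y_1$-independent Lipschitz profile $\Xi(x,p_1,\cdot)=\zeta(x,p_1,y_2)$ and a point of differentiability $y_2^*\neq 0$ where \eqref{W} holds classically, so that $\min_{q\in\R}\overline H(x,p_1e_1+qe_2)\le \overline H\bigl(x,p_1e_1+\zeta'(x,p_1,y_2^*)e_2\bigr)=\HoneT(x,p_1)$. One small wording fix: at a mere point of differentiability the tangent affine function need not touch $\zeta$ from above or from below (consider $y_2^2\sin(1/y_2)$ at $0$); the correct justification, which is the standard fact you allude to, is that $\zeta'(x,p_1,y_2^*)$ belongs to both $D^+\zeta(y_2^*)$ and $D^-\zeta(y_2^*)$, and every element of these sets is attained as the derivative of a $C^1$ test function touching appropriately, so the viscosity sub- and supersolution inequalities do apply at $y_2^*$ with $q^*=\zeta'(x,p_1,y_2^*)$.
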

Of course, \eqref{eq:cor:E_bigger_than_E_0} implies that $\HoneT(x,p_1) \geq  \min_{q\in \R^2}  \overline H(x,q)\ge -M_\ell$.
\begin{definition}\label{def:3} $\;$\\
  For   $(x,p)\in \R^2\times \R^2$, we set
  \begin{eqnarray}
    \label{eq:23}
    \overline{H}_{\downarrow}(x,p)= \max_{a\in A:  \bar{f}(x,a)\cdot e_2 \ge 0  } \Bigl(-p\cdot \bar{f}(x,a)-\bar{\ell}(x,a)\Bigr) ,\\
     \label{eq:24}
    \overline{H}_{\uparrow}(x,p)= \max_{a\in A:  \bar{f}(x,a)\cdot e_2 \le 0  } \Bigl(-p\cdot \bar{f}(x,a)-\bar{\ell}(x,a)\Bigr) .
  \end{eqnarray}
  The function $p_2\mapsto  \overline{H}_{\downarrow}(x,p_1 e_1+p_2e_2)$, resp.
 $p_2\mapsto  \overline{H}_{\uparrow}(x,p_1 e_1+p_2e_2)$
 is the nonincreasing, resp. nondecreasing envelope of $p_2\mapsto  \overline{H}(x,p_1 e_1+p_2e_2)$. The following identities hold:
 \begin{eqnarray}
   \label{eq:25}
   \overline{H}(x,p)&=& \overline{H}_{\downarrow}(x,p)+ \overline{H}_{\uparrow}(x,p)- \min_{q\in \R}\overline H(x,p_1e_1+qe_2)\\
   \notag &=& \max\left(   \overline{H}_{\downarrow}(x,p), \overline{H}_{\uparrow}(x,p) \right).
 \end{eqnarray}

From Corollary \ref{cor:E_bigger_than_E_0} and the coercivity
of  $\overline H$,  the following quantities are well defined for all
 $(x,p_1)\in \R^2\times \R$,
\begin{eqnarray}
\label{eq:26}
  \overline{\Pi}(x,p_1)= \max\left\lbrace q\in \R : %\overline H ( p_1e_1+qe_2)=
  \overline H_\uparrow( x,p_1e_1+qe_2)=\HoneT(x,p_1) \right\rbrace,\\
  \label{eq:27}
  \underline{\Pi}(x,p_1)= \min\left\lbrace q\in \R : %\overline H ( p_1e_1+qe_2)=
  \overline H_\downarrow( x,p_1e_1+qe_2)=\HoneT(x,p_1) \right\rbrace.
\end{eqnarray}
\end{definition}
% \begin{remark}\label{rem:5}
%   Note that  $(x,p_1, \tilde p_1)\in \overline{\cM_1}\times \R\times \R$ are such that
%   $\HoneT(x,p_1) >\HoneT(x,\tilde p_1)$ 
  
% \end{remark}
The two propositions that follow can be proved in the same way as  \cite[Prop. 4.13 and 4.14]{MR3565416}:
\begin{proposition}
  \label{cor:slopes_omega}
For   $(x,p_1)\in \R^2\times \R$,
if $\HoneT(x, p_1)>\min_{p_2\in \R} \overline H(x,p_1e_1+p_2 e_2)$, then  there exist  $\rho^*=\rho^*(x,p_1)  >0$ and $M^*= M^*(x,p_1)   \in \R$
such that
\begin{enumerate}
\item for all $(y_1,y_2)\in (\R/\Z)\times  [\rho^*,+\infty)$, $h_2\ge 0$ and $h_1\in \R$,
\begin{equation}
\label{slope32}
\xi(x, p_1, y+h_1e_1+h_2 e_2)-\xi(x, p_1, y)\geq
\overline{\Pi}(x,p_1) h_2-M^*.
\end{equation}
\item  for all  $(y_1,y_2)\in (\R/\Z)\times (-\infty,-\rho^*]$, $h_2\ge 0$ and $h_1\in \R$,
\begin{equation}
  \label{slope3_bis2}
  \xi(x, p_1, y+h_1e_1-h_2 e_2)-\xi(x, p_1, y)\geq
  -\underline{\Pi}(x,p_1) h_2-M^*.
  \end{equation}
\end{enumerate}
\end{proposition}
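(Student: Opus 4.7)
I plan to follow the approach of \cite[Propositions 4.13 and 4.14]{MR3565416}, adapted to our setting in which the Hamiltonian depends continuously on the slow variable $x$. I will detail \eqref{slope32}; the inequality \eqref{slope3_bis2} will follow by an analogous reasoning in the lower half $y_2 \le -\rho^*$, with $\overline{\Pi}$ replaced by $-\underline{\Pi}$.

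The first observation is that in the region $y_2 \ge R_0$, $H_{1,\per}(x, y, \cdot)$ coincides with $\overline{H}(x, \cdot)$, so each truncated corrector $\xi_\rho(x, p_1, \cdot)$ is a viscosity solution of $\overline{H}(x, p_1 e_1 + D\xi_\rho) = \lambda_\rho(x, p_1)$ in the interior of the strip $(\R/\Z) \times (R_0, \rho)$, and a state-constrained supersolution at the top boundary $y_2 = \rho$. The hypothesis $\HoneT(x, p_1) > \min_{p_2} \overline{H}(x, p_1 e_1 + p_2 e_2)$ will be used to guarantee that $\{q : \overline{H}(x, p_1 e_1 + q e_2) = \HoneT\}$ reduces to two distinct points $\underline{\Pi}(x,p_1) < \overline{\Pi}(x,p_1)$, and that $\overline{H}(x, p_1 e_1 + q e_2) < \HoneT$ strictly on the open interval $(\underline{\Pi}, \overline{\Pi})$. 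This strict gap is the essential ingredient that powers the barrier argument.

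The core of the proof will be a barrier/comparison argument. For small $\eta \in (0, \overline{\Pi} - \underline{\Pi})$, the affine function $\phi^\eta(y) := (\overline{\Pi} - \eta)\, y_2$ is a \emph{strict} viscosity subsolution of $\overline{H}(x, p_1 e_1 + D\phi^\eta) = \HoneT$ on $\{y_2 > R_0\}$, since $\overline{H}(x, p_1 e_1 + (\overline{\Pi} - \eta) e_2) < \HoneT$. I would compare $\xi_\rho$ with shifted barriers $y \mapsto \phi^\eta(y - y^0) + \xi_\rho(y^0)$ for varying base points $y^0$, using both the uniform Lipschitz bound of Lemma~\ref{lem:existence_solution_truncated_cell_pb} to control boundary values at $y_2 = R_0$, and the optimal-control interpretation of $\xi_\rho$ at $y_2 = \rho$ --- where admissible trajectories are forced to satisfy $\bar{f} \cdot e_2 \le 0$, which is precisely the condition bringing the envelope $\overline{H}_{\uparrow}$, and hence the extremal slope $\overline{\Pi}$, into play. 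This would yield, for suitable constants $\rho^*_\eta > R_0$ and $M^*_\eta$ both independent of $\rho$,
\begin{equation*}
\xi_\rho(x, p_1, y + h_1 e_1 + h_2 e_2) - \xi_\rho(x, p_1, y) \ge (\overline{\Pi} - \eta) h_2 - M^*_\eta
\end{equation*}
for all $(y_1, y_2) \in (\R/\Z) \times [\rho^*_\eta, \rho - h_2]$, all $h_2 \ge 0$ and all $h_1 \in \R$ (the $1$-periodicity of $\xi_\rho$ in $y_1$ combined with the uniform Lipschitz estimate absorbs the $h_1$-dependence into $M^*_\eta$). Passing $\rho \to +\infty$ via Theorem~\ref{thm:stability_from_truncated_cell_pb_to_global_cell_pb} and then $\eta \to 0^+$ would deliver the announced estimate.

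The main obstacle will be justifying the selection of the \emph{larger} slope $\overline{\Pi}$ (rather than $\underline{\Pi}$) at $+\infty$: the bilateral equation $\overline{H}(x, p_1 e_1 + D\xi) = \HoneT$ admits solutions corresponding to either slope --- as one sees, for instance, from Lemma~\ref{lem:rescaling_omega} --- and the preference for $\overline{\Pi}$ in \eqref{slope32} comes exclusively from the state-constrained structure of the truncated cell problem at $y_2 = \rho$. Making this rigorous rests on the optimal-control representation of state-constrained problems (as in \cite{MR3565416}) combined with uniform-in-$\rho$ estimates that ultimately rely on the regularity of $\overline{H}$ and $\HoneT$ established in Lemma~\ref{lem:regularity_of_ergodic_hamiltonian}.
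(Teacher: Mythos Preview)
Your plan matches the paper's own treatment: the paper does not give an independent proof of this proposition but simply refers to \cite[Prop.~4.13 and 4.14]{MR3565416}, and your outline reproduces precisely that strategy (barrier comparison in the half-strip $y_2>R_0$ where $H_{1,\per}\equiv\overline H$, with the state constraint at $y_2=\rho$ selecting the slope $\overline\Pi$ via $\overline H_\uparrow$, uniform Lipschitz control absorbing the $h_1$-shift, and passage to the limit $\rho\to\infty$). One technical point to be careful about when you carry this out: the final step ``then $\eta\to 0^+$'' is delicate, because the constants $\rho^*_\eta,\,M^*_\eta$ you obtain for each $\eta$ may degenerate as $\eta\downarrow 0$, and the statement requires a \emph{single} pair $(\rho^*,M^*)$ valid for the exact slope $\overline\Pi$; in \cite{MR3565416} this is handled by working more directly with the optimal-control representation of the state-constrained truncated problem rather than by a pure PDE barrier-plus-limit argument, and you should follow that route.
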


\begin{proposition}
\label{cor:control_slopes_W}
Consider   $(x,p_1)\in \R^2\times \R$.
\\
If $\HoneT(x,p_1) >  \min_{q\in \R}\overline H(x,p_1e_1+qe_2)$,
  then
  \begin{equation}
     \label{eq:28}
     \Xi(x,p_1,y)=   \overline{\Pi}(x,p_1) y_{2,+} - \underline{\Pi}(x,p_1) y_{2,-}.
   \end{equation}
If $\HoneT(x,p_1)= \min_{q\in \R}\overline H(x,p_1e_1+qe_2)$,
  then
\begin{eqnarray}
\label{cor:control_slopes_W1}
  \underline{\Pi}(x,p_1)  \le \partial_{y_2}\Xi(x,p_1,y) \le \overline{\Pi}(x,p_1), \\
\label{eq:control_slopes_W_summary}
 \underline{\Pi}(x,p_1) y_{2,+} - \overline{\Pi}(x,p_1) y_{2,-}
  \le \Xi(x,p_1,y) \le    \overline{\Pi}(x,p_1) y_{2,+} - \underline{\Pi}(x,p_1) y_{2,-}.
\end{eqnarray}
\end{proposition}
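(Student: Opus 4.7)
The plan is to exploit that, by Lemma \ref{lem:rescaling_omega}, $\Xi(x,p_1,\cdot)$ depends only on the variable $y_2$, vanishes at $y_2=0$, and is a viscosity solution, on each of the half-lines $\{y_2>0\}$ and $\{y_2<0\}$, of the one-dimensional equation $g(\Xi'(y_2))=\HoneT(x,p_1)$, where $g(q):=\overline H(x,p_1 e_1+q e_2)$. Since $g$ is continuous, convex and coercive, the sublevel set $\{g \le \HoneT(x,p_1)\}$ is a closed interval, which by Definition \ref{def:3} is exactly $[\underline\Pi(x,p_1),\overline\Pi(x,p_1)]$. In Case~1 this interval is nondegenerate and $g$ is strictly below $\HoneT$ on $(\underline\Pi,\overline\Pi)$; in Case~2 the interval may still be nondegenerate but the minimum value of $g$ equals $\HoneT$.

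For Case~2 I would observe that the viscosity subsolution property together with Rademacher's theorem give $g(\Xi'(y_2)) \le \HoneT(x,p_1)$ at almost every $y_2$, hence $\underline\Pi \le \Xi'(y_2) \le \overline\Pi$ almost everywhere, which is exactly \eqref{cor:control_slopes_W1}. Integrating between $0$ and $y_2$ and distinguishing the sign of $y_2$ then produces the two-sided bound \eqref{eq:control_slopes_W_summary}.

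Case~1 is where the main difficulty lies. Using the supersolution property and Rademacher's theorem, one first upgrades the almost-everywhere inclusion to $\Xi'(y_2) \in \{\underline\Pi,\overline\Pi\}$, because $g$ is strictly smaller than $\HoneT$ on the open interval $(\underline\Pi,\overline\Pi)$. Next, I would show that any \emph{convex} corner of $\Xi$, with left derivative $\underline\Pi$ and right derivative $\overline\Pi$, is forbidden by the supersolution condition: an affine test function $\phi$ with slope chosen in the open interval $(\underline\Pi,\overline\Pi)$ and touching $\Xi$ at such a corner realises a strict local minimum of $\Xi-\phi$, which would force $g(\phi'(y_0)) \ge \HoneT$, contradicting the strict inequality just mentioned. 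Consequently, on each of the half-lines $\Xi$ is piecewise affine with slopes in $\{\underline\Pi,\overline\Pi\}$ and only \emph{concave} corners, where the slope decreases from $\overline\Pi$ to $\underline\Pi$ as $y_2$ increases, are allowed.

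I would then rule out any such transition by passing the slope estimate \eqref{slope32} of Proposition \ref{cor:slopes_omega} to the limit $\varepsilon \to 0$ through the rescaling $\Xi_\varepsilon=\varepsilon\,\xi(x,p_1,\cdot/\varepsilon)$; after using continuity and the normalization $\Xi(x,p_1,0)=0$, this yields $\Xi(x,p_1,y) \ge \overline\Pi(x,p_1)\,y_2$ for every $y_2>0$. This inequality rules out an initial interval of slope $\underline\Pi$ starting at $y_2=0$, and since no later upward transitions are permitted, it forces $\Xi'(y_2) \equiv \overline\Pi(x,p_1)$ on $(0,+\infty)$. The symmetric argument based on \eqref{slope3_bis2} gives $\Xi'(y_2) \equiv \underline\Pi(x,p_1)$ on $(-\infty,0)$, and combining both half-lines with $\Xi(x,p_1,0)=0$ yields formula \eqref{eq:28}, concluding Case~1.
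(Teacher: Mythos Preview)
The paper does not actually prove this proposition; it simply refers to \cite[Prop.~4.13 and 4.14]{MR3565416}. Your argument is along the right lines and would likely match the approach in that reference, but one step is both unjustified and unnecessary, and once you remove it the proof becomes shorter.

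The problematic step is the assertion that, because $\Xi'(y_2)\in\{\underline\Pi,\overline\Pi\}$ almost everywhere, $\Xi$ must be \emph{piecewise affine}. A Lipschitz function whose a.e.\ derivative takes only two values need not be piecewise affine (integrate $a\,\mathbf 1_E+b\,\mathbf 1_{E^c}$ for a measurable $E$ that is not a finite union of intervals). One can recover piecewise affinity by first proving concavity via the supersolution test, but this detour is not needed.

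Here is the clean route for Case~1. From the subsolution property alone you already have $\Xi'(y_2)\le\overline\Pi(x,p_1)$ a.e., hence by the fundamental theorem of calculus and $\Xi(x,p_1,0)=0$,
\[
\Xi(x,p_1,y)\le \overline\Pi(x,p_1)\,y_2\qquad\text{for all }y_2>0.
\]
On the other hand, rescaling \eqref{slope32} exactly as you indicate gives, for each fixed $z_2>0$ and $k_2\ge 0$,
\[
\Xi(x,p_1,z_2+k_2)-\Xi(x,p_1,z_2)\ge \overline\Pi(x,p_1)\,k_2,
\]
and letting $z_2\to 0^+$ yields $\Xi(x,p_1,y)\ge \overline\Pi(x,p_1)\,y_2$ for every $y_2>0$. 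The two inequalities force equality, and the symmetric argument with \eqref{slope3_bis2} handles $y_2<0$. Note that the supersolution property and the ``no convex corners'' discussion are never used.

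Case~2 is fine as written.
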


\subsubsection{Viscosity inequalities satisfied by $\overline u$ and $\underline u$ on $\cM_1$}
The following theorem is proved by using Evans' method of perturbed test-function, see \cite{MR1007533}, involving the functions $\Xi(x,p_1,\cdot)$ and the related correctors $\xi(x,p_1,\cdot)$ for suitable vectors $p$. Its proof follows the same lines as that of  \cite[Th. 1.5]{MR3565416}.
\begin{theorem}\label{main_theorem_secM1}
  Consider $x\in \cM_1$.

  If $\phi: \R^2 \to \R$ is a continuous fonction  with $\phi|_{\R\times [0,+\infty)} \in C^1( \R\times [0,+\infty) )$ and
  $\phi|_{\R\times (-\infty,0]} \in C^1( \R\times (-\infty,0] )$ and such that $x$ is a  local maximum of $\overline u -\phi$, then
  \begin{equation}
    \label{eq:29}
    \begin{split}
&    \alpha \overline u(x)+ \max\left( \HoneT(x, \partial_{x_1} \phi(x)), \overline H_{\downarrow} \left(x, D \phi|_{\R\times [0,+\infty)}(x)\right),
      \overline H_{\uparrow} \left(x, D \phi|_{\R\times (-\infty,0]}(x)\right)\right) \\ &\le 0.      
    \end{split}
  \end{equation}

  If $\phi: \R^2 \to \R$ is a continuous fonction with $\phi|_{\R\times [0,+\infty)} \in C^1( \R\times [0,+\infty) )$ and  $\phi|_{\R\times (-\infty,0]} \in C^1( \R\times (-\infty,0] )$ and such that $x$ is a  local minimum of $\underline u -\phi$, then
  \begin{equation}
    \label{eq:30}
     \begin{split}
  &  \alpha \underline u(x)+ \max\left( \HoneT(x, \partial_{x_1} \phi(x)), \overline H_{\downarrow} \left(x, D \phi|_{\R\times [0,+\infty)}(x)\right),
      \overline H_{\uparrow} \left(x, D \phi|_{\R\times (-\infty,0]}(x)\right)\right) \\ &\ge 0.
  \end{split}
\end{equation}
\end{theorem}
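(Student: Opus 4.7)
The plan is to apply Evans' method of perturbed test functions, following the argument in \cite[Theorem 1.5]{MR3565416}. I will sketch only the supersolution inequality \eqref{eq:30}; the subsolution inequality \eqref{eq:29} is symmetric and uses analogous constructions. After subtracting a small quadratic in $x$, I may assume that $x_0 \in \cM_1$ is a \emph{strict} local minimum of $\underline u - \phi$. Write $p_1 := \partial_{x_1}\phi(x_0)$, $p^+ := D\phi|_{\R \times [0,+\infty)}(x_0)$, $p^- := D\phi|_{\R \times (-\infty,0]}(x_0)$, so that $p^+_1 = p^-_1 = p_1$. Arguing by contradiction, assume each of the three quantities
\begin{equation*}
\alpha \underline u(x_0) + \HoneT(x_0,p_1),\quad \alpha \underline u(x_0) + \overline H_\downarrow(x_0, p^+),\quad \alpha \underline u(x_0) + \overline H_\uparrow(x_0, p^-)
\end{equation*}
is strictly smaller than $-2\eta$ for some $\eta>0$.

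The perturbed test function is built in a small ball $B_r(x_0)$ from $\phi$, the corrector $\xi(x_0, p_1, \cdot)$ furnished by Theorem~\ref{thm:stability_from_truncated_cell_pb_to_global_cell_pb}, and a linear-in-$x_2$ correction whose slopes are dictated by Propositions \ref{cor:slopes_omega} and \ref{cor:control_slopes_W}. A schematic form is
\begin{equation*}
\phi^\varepsilon(x) \;=\; \phi(x) \,+\, \varepsilon\,\xi\!\left(x_0, p_1, x/\varepsilon\right) \,-\, \Xi\!\left(x_0, p_1, x\right),
\end{equation*}
modified near $\{x_2 = 0\}$ so that it is piecewise $C^1$ with gradients compatible with $p^+$ on $\{x_2 > 0\}$ and $p^-$ on $\{x_2 < 0\}$. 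When $p^+_2 > \overline \Pi(x_0, p_1)$ or $p^-_2 < -\underline \Pi(x_0, p_1)$, the construction is further adjusted by a minimum with an affine function of the appropriate slope; this is licit thanks to the definitions \eqref{eq:26}--\eqref{eq:27} and the convexity of $\overline H$. The verification that $\phi^\varepsilon$ is a viscosity subsolution of $\alpha v + H_\varepsilon(x,Dv) \le -\eta$ on $B_r(x_0)$ decomposes according to the position of the fast variable $x/\varepsilon$: in the region where $x/\varepsilon \in \Omega$ the cell equation \eqref{cellpE} combined with \eqref{eq:9} yields the bound at the level $\HoneT(x_0, p_1) + o(1)$, whereas in the region where $x/\varepsilon \notin \Omega$ the identity \eqref{eq:11} reduces $H_\varepsilon$ to $\overline H$ and the asymptotic-slope information translates the inequality into one at the level of $\overline H_\uparrow(x_0, p^-)$ or $\overline H_\downarrow(x_0, p^+)$. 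Comparing $\phi^\varepsilon$ to $u_\varepsilon$ on $B_r(x_0)$ and sending $\varepsilon \to 0$, the strict local minimum property at $x_0$ produces the desired contradiction.

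The main obstacle, both here and in \cite{MR3565416, MR3912640}, is the interfacial matching of $\phi^\varepsilon$: one must verify the subsolution property across the interface $\{x_2 = 0\}$, where $D\phi$ has a jump, \emph{and} across the boundary of $\Omega_\varepsilon$ in the fast variable, where the Hamiltonian transitions from $H_{1,\per}$ to $\overline H$. These two transitions must be handled simultaneously, and the asymptotic slopes of $\xi$ at $y_2 \to \pm \infty$ are precisely what guarantees compatibility. The degenerate case $\HoneT(x_0, p_1) = \min_{q\in\R} \overline H(x_0, p_1 e_1 + q e_2)$ is the most delicate, because then $\xi$ no longer has sharp asymptotic slopes and one must resort to the envelope estimates \eqref{cor:control_slopes_W1}--\eqref{eq:control_slopes_W_summary} in place of the strict slope bounds \eqref{slope32}--\eqref{slope3_bis2}. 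This is the genuinely technical part of the argument, and is the reason the authors refer the reader to the detailed proofs in \cite{MR3565416, MR3912640} rather than reproducing them.
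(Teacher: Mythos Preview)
Your sketch is correct and takes essentially the same approach as the paper: the paper does not give a detailed proof of this theorem, but simply states that it is obtained by Evans' perturbed test-function method using the correctors $\xi(x,p_1,\cdot)$ and the rescaled limits $\Xi(x,p_1,\cdot)$, following the lines of \cite[Th.~1.5]{MR3565416}. Your identification of the interfacial matching and of the degenerate case $\HoneT(x_0,p_1)=\min_{q}\overline H(x_0,p_1e_1+qe_2)$ as the technical crux is exactly the reason the authors defer to \cite{MR3565416,MR3912640}.
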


We conclude Subsection \ref{sec:proof_2} by the following corollary of Theorem \ref{main_theorem_secM1}, which states that  $\overline u$ and  $ \underline u$ are respectively
sub and supersolutions of  \eqref{eq:18} and  \eqref{eq:17}. This will yield the effective problem on $\cM_1$, i.e. item 2. in Theorem  \ref{sec:main-result-1}.
\begin{corollary} \label{corollary:2}
  Consider $x\in \cM_1$.

  If $\phi \in C^1 (\cM_1)$ is such that $x$ is a local maximum of $\overline u -\phi$ on $\cM_1$, then
  \begin{equation}
    \label{eq:31}
   \alpha \overline u(x)+  \HoneT(x, \partial_{x_1} \phi(x)) \le 0.
 \end{equation}

  If $\phi \in C^1 (\R^2)$ is such that $x$ is a local minimum of $\underline u -\phi$, then
  \begin{equation}
    \label{eq:32}
   \alpha \underline u(x)+ \max\left( \HoneT(x, \partial_{x_1} \phi(x)), \overline H (x, D\phi(x))\right)  \ge 0.
  \end{equation}
\end{corollary}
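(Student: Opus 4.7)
The supersolution inequality \eqref{eq:32} is essentially immediate from Theorem \ref{main_theorem_secM1}: the test function $\phi\in C^1(\R^2)$ is admissible as it stands, its two one-sided gradients at $x$ both equal $D\phi(x)$, and the identity $\overline{H}=\max(\overline{H}_{\downarrow},\overline{H}_{\uparrow})$ from \eqref{eq:25} collapses the conclusion \eqref{eq:30} directly into \eqref{eq:32}.

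For the subsolution inequality \eqref{eq:31}, the plan is to upgrade the tangential test function $\phi\in C^1(\cM_1)$ to a piecewise $C^1$ test function on $\R^2$ of exactly the class admitted by Theorem~\ref{main_theorem_secM1}. Let $L$ be a uniform Lipschitz bound for $(u_\varepsilon)_\varepsilon$, and hence for $\overline{u}$, set $p_1:=\phi'(x_1)$, and choose
\begin{equation*}
p_2^+:=\max\{L,\underline{\Pi}(x,p_1)\},\qquad p_2^-:=\min\{-L,\overline{\Pi}(x,p_1)\},
\end{equation*}
which are finite by Corollary~\ref{cor:E_bigger_than_E_0} and Definition~\ref{def:3}. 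Define, in a neighborhood of $x$ (and extend continuously to $\R^2$),
\begin{equation*}
\tilde\phi(x_1,x_2):=\phi(x_1)+p_2^+(x_2)_+-p_2^-(x_2)_-,
\end{equation*}
so that $\tilde\phi$ is continuous, agrees with $\phi$ on $\cM_1$, is $C^1$ on each closed half-plane $\R\times[0,+\infty)$ and $\R\times(-\infty,0]$, and has one-sided gradients $p_1 e_1+p_2^\pm e_2$ at $x$.

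Two verifications close the argument. First, $x$ is a local maximum of $\overline{u}-\tilde\phi$ on $\R^2$: combining the hypothesis $(\overline{u}-\phi)(x'_1,0)\leq(\overline{u}-\phi)(x_1,0)$ (valid for $x'_1$ close to $x_1<0$) with the Lipschitz estimate $|\overline{u}(x'_1,x'_2)-\overline{u}(x'_1,0)|\leq L|x'_2|$ gives
\begin{equation*}
(\overline u-\tilde\phi)(x')-(\overline u-\tilde\phi)(x)\leq (L-p_2^+)(x'_2)_+ + (L+p_2^-)(x'_2)_-,
\end{equation*}
which is nonpositive precisely because $p_2^+\geq L$ and $p_2^-\leq -L$. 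Second, the monotonicity of $\overline{H}_\downarrow$ (nonincreasing in $p_2$) together with $p_2^+\geq\underline{\Pi}(x,p_1)$ and the definition of $\underline{\Pi}$ in \eqref{eq:27} yields $\overline{H}_\downarrow(x,p_1 e_1+p_2^+ e_2)\leq\HoneT(x,p_1)$; symmetrically, $p_2^-\leq\overline{\Pi}(x,p_1)$ forces $\overline{H}_\uparrow(x,p_1 e_1+p_2^- e_2)\leq\HoneT(x,p_1)$. Applying Theorem~\ref{main_theorem_secM1} to $\tilde\phi$ at $x$ therefore reduces \eqref{eq:29} to \eqref{eq:31}. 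The main substantive point is reconciling the two roles of the transverse slopes~$p_2^\pm$: penalizing off-$\cM_1$ motion to preserve the local maximum forces $|p_2^\pm|\geq L$, while staying in the "flat" zone where both envelope Hamiltonians lie below $\HoneT$ requires $p_2^+\geq\underline{\Pi}$ and $p_2^-\leq\overline{\Pi}$; these constraints pull in the same direction precisely because $\overline{H}_\downarrow$ and $\overline{H}_\uparrow$ are monotone in opposite directions in $p_2$, so both can be satisfied by taking $p_2^+$ sufficiently large positive and $p_2^-$ sufficiently large negative.
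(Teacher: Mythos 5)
Your argument is correct and follows essentially the same route as the paper: extend the tangential test function $\phi$ to a piecewise $C^1$ function with a transverse kink on $\cM_1$, choose the transverse slopes at least as large (in magnitude) as the Lipschitz constant of $\overline u$ to preserve the local maximum, make $\overline H_{\downarrow}$ and $\overline H_{\uparrow}$ fall below $\HoneT(x,\partial_{x_1}\phi(x))$, and apply Theorem~\ref{main_theorem_secM1} so that the maximum in \eqref{eq:29} collapses to $\HoneT$. The only cosmetic difference is that the paper takes a single symmetric slope $B$ large enough to reach $\min_{q_2}\overline H(x,D\phi(x)+q_2e_2)$ and then invokes \eqref{eq:cor:E_bigger_than_E_0}, whereas you stop at the levels $\underline{\Pi}(x,p_1)$ and $\overline{\Pi}(x,p_1)$ (whose well-posedness rests on the same corollary), which changes nothing of substance.
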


\begin{proof}
Obtaining \eqref{eq:32} from  \eqref{eq:30} is straightforward.

Let $\phi \in C^1 (\cM_1)$ be such that $x$ is a local maximum of $\overline u -\phi$ on $\cM_1$.
 % Because $\overline u$ is Lipschitz continuous in $\R^2$ and thanks to \eqref{eq:cor:E_bigger_than_E_0},
  It is possible to extend $\phi$ outside $\cM_1$ by a continuous function $\widetilde \phi: \R^2\to \R$ of the following form:
  \begin{displaymath}
    \widetilde \phi (z)= \phi(z_1) + B |z_2|,
  \end{displaymath}
  where the positive real number $B$ will be chosen soon.
Clearly,  $\widetilde \phi|_{\{z: z_2\ge 0\}} \in C^1( \R\times [0,+\infty) )$ and  $\widetilde \phi|_{\{z: z_2 \le 0\}} \in C^1( \R\times (-\infty,0] )$.
  We choose $B$ in order to satisfy two conditions:
  \begin{enumerate}
  \item $B$ is greater than the Lipschitz constant of $\overline u$. This implies that $x$ is a  local maximum of $\overline u -\widetilde \phi$.
  \item $B$ is sufficiently large  such that
    \begin{eqnarray*}
      \overline H_{\downarrow} \left(x, D \widetilde \phi|_{\R\times [0,+\infty)}(x)\right) =  \overline H_{\downarrow} \left(x, D\phi(x)+B e_2\right)= \min_{q_2}  \overline H \left(x, D\phi(x)+q_2e_2\right),\\
     \overline H_{\uparrow} \left(x, D \widetilde \phi|_{\R\times (-\infty,0]}(x)\right)  = \overline H_{\uparrow} \left(x, D \phi(x)-Be_2\right)= \min_{q_2}  \overline H \left(x, D\phi(x)+q_2e_2\right).
    \end{eqnarray*}
  \end{enumerate}
  It is clear that $B$ satisfying the condition above exists.
Thanks to \eqref{eq:cor:E_bigger_than_E_0},
 \begin{displaymath}
    \HoneT(x, \partial_{x_1} \phi(x))\ge   \max\left(   \overline H_{\downarrow} \left(x, D \widetilde \phi|_{\R\times [0,+\infty)}(x)\right),
      \overline H_{\uparrow} \left(x, D \widetilde \phi|_{\R\times (-\infty,0]}(x)\right) \right).
  \end{displaymath}
  % in such a way that
  % $\widetilde \phi|_{\{x: x_2\ge 0\}} \in C^1( \R\times [0,+\infty) )$ and  $\widetilde \phi|_{\{x: x_2 \le 0\}} \in C^1( \R\times (-\infty,0] )$,   that $x$ is a  local maximum of $\overline u -\widetilde \phi$, and that
  % \begin{displaymath}
  %   \HoneT(x, \partial_{x_1} \phi(x))\ge   \max\left(   \overline H_{\downarrow} \left(x, D \widetilde \phi|_{\R\times [0,+\infty)}(x)\right),
  %     \overline H_{\uparrow} \left(x, D \widetilde \phi|_{\R\times (-\infty,0]}(x)\right) \right).
  % \end{displaymath}
  % 
  Because \eqref{eq:29} is satisfied by $\widetilde \phi$,  the observation above yields  \eqref{eq:31}.  
\end{proof}
We have found the inequality satisfied by $ \underline u$ and  $\overline u$ at $x\in \cM_1$. We can now concentrate on the effective equations at the origin.
\subsection{The ergodic constant  $E$ and related correctors}
\label{sec:ergod-const-glob}
%\subsubsection{The ergodic constant and related correctors}\label{sec:ergod-const-glob}
The construction that follows is identical to that introduced in \cite{MR4643677}, and we repeat it for consistency and  convenience of the reader.
In order to understand  the asymptotics of $u_\varepsilon$ near the origin, we start by solving {\sl truncated cell problems} in balls, associated to  state constrained boundary conditions. From the optimal control theory viewpoint,  these problems account for trajectories that  remain close to the origin.\\
For $\lambda>0$, $R>0$, we know from e.g. \cite{MR838056,MR951880} that there exists a unique function $w^{\lambda, R} \in C(\overline{B_R(0)})$ such  that
\begin{eqnarray}
  \label{eq:33}
\lambda w^{\lambda, R} + H(0,y, Dw^{\lambda, R}) &\le& 0 \quad \hbox{ in } B_R(0),\\
\label{eq:34}
\lambda w^{\lambda, R} + H(0, y, Dw^{\lambda, R}) &\ge& 0 \quad \hbox{ in } \overline {B_R(0)},
\end{eqnarray}
the above inequalities being understood in the sense of viscosity.  The function $w^{\lambda, R}$ is the value function of the following infinite horizon state constrained optimal control problem in $\overline{B_R(0)}$,
\begin{eqnarray}\label{eq:35}
  w^{\lambda, R}(z)=\\ \notag  \inf  \int_0 ^\infty    e^{-\lambda t} \ell \left(0, y(t), a(t)\right) dt \quad  \hbox{subj. to } \left\{
    \begin{array}[c]{l} \ds
        y(t)= z+\int_0 ^t f\left (0, y(\tau) ,a(\tau)\right) d\tau\\
        y(t)\in \overline{B_R(0)},\\
        a\in L^\infty(\R_+) \\ a(t)\in A, \; \hbox{for almost } t \ge 0.
    \end{array}
\right.
\end{eqnarray}
Since $\ell(0,\cdot,\cdot)$ is bounded on $\R^2 \times A$,
  $\lambda  \|w^{\lambda, R}\|_{L^\infty(B_R(0))}$ is bounded uniformly in $\lambda$ and $R$. More precisely,
 $  \min_{(y,a)\in \R^2\times A} \ell(0,y,a) \le   \lambda  w^{\lambda, R} \le \max_{(y,a)\in \R^2\times A} \ell(0,y,a)$.
This and  the uniform coercivity of $H$ imply with \eqref{eq:33} that $\|Dw^{\lambda,R}\|_ {L^\infty(B_R(0))}  $ is bounded uniformly in $\lambda$ and $R$.

Using Ascoli-Arzel{\`a} theorem, we may suppose that up to the extraction of a sequence,  as $\lambda\to 0$,  $\lambda w^{\lambda, R}$ 
tends uniformly on $\overline {B_R(0)}$ to some {\sl ergodic} constant $-E^R$ that is bounded from above and below uniformly in $R$,   and that $w^{\lambda, R} -w^{\lambda, R}(0)$ tends uniformly 
on $\overline {B_R(0)}$ to some function $w^R$ such that $w^R(0)=0$ and which is Lipschitz continuous with a Lipschitz constant independent of $R$.  By classical results on the stability of viscosity solutions 
of state constrained problems,  $w^R$ is a viscosity solution of 
\begin{eqnarray}
\label{eq:36}
 H(0,y, Dw^{ R}) &\le& E^R \quad \hbox{ in } B_R(0),\\
\label{eq:37}
 H(0,y, Dw^{R}) &\ge& E^R \quad \hbox{ in } \overline {B_R(0)}.
\end{eqnarray}
The comparison principle for state constrained problems, see \cite{MR838056,MR951880}, yields the uniqueness 
of $E^R$ such that (\ref{eq:36})-(\ref{eq:37}) has a solution. Thus, 
\begin{displaymath}
\ds \lim_{\lambda\to 0} \| \lambda w^{\lambda, R}+ E^R\| _{C(\overline B_R(0))}=0, 
\end{displaymath} i.e. uniform convergence and not only for a subsequence.

We deduce for example from (\ref{eq:35})  that 
\begin{displaymath}
  R_1\ge R_2 \quad \Rightarrow \quad \lambda w^{\lambda, R_1} \le \lambda w^{\lambda, R_2}, 
\end{displaymath}
and passing to the limit as $\lambda\to 0$, we obtain the monotonicity property of the ergodic constants $E^R$:
\begin{equation}
  \label{eq:38}
  R_1\ge R_2 \quad \Rightarrow \quad E^{ R_1} \ge E^{ R_2}. 
\end{equation}
Since $E^R$ is bounded from above independently of $R$,
\eqref{eq:38}  implies that
\begin{equation}
   \label{eq:39}
   E=\lim_{R\to \infty} E^R
\end{equation}
 exists in $\R$.

 Similarly, since  $w^R(0)=0$ and  $w^R$ is Lipschitz continuous on $\overline {B_R(0)}$ with a Lipschitz constant independent of $R$, we may construct by Ascoli-Arzel{\`a} theorem and  a diagonal extraction argument a sequence $(R_n)_{n\in \N}$,  $R_n\to +\infty$ as $n\to \infty$,  such that $w^{R_n}$ tends to some function $w$ locally uniformly in $\R^2$. We then see that $w(0)=0$ and $w$ is a  Lipschitz continuous viscosity solution of 
\begin{equation}
  \label{eq:40}
H(0,y, Dw)= E \quad \hbox{in }\R^2.
\end{equation}
Let us now zoom out and consider the function $w_\varepsilon: x \mapsto \varepsilon w(\frac x \varepsilon)$; it is clearly a viscosity solution of $ H(0,\frac x \varepsilon , D_x w_\varepsilon)= E$, and it is Lipschitz continuous with the same constant as $w$. Hence, after the extraction of a sequence, we may assume that 
$w_\varepsilon$ converges locally uniformly to some Lipschitz function  $W$ on $\R^2$. A standard argument yields  that $W$ is a viscosity solution of $\overline H (0,DW)=E$ in $\cM_2$.  This implies  
\begin{equation}
  \label{eq:41}
E\ge \min_{p\in \R^2 }  \overline H (0,p).
\end{equation}

Arguing as in Subsection \ref{sec:proof_2}, in particular Theorem \ref{main_theorem_secM1} and Corollary  \ref{corollary:2}, or as in
 \cite{MR3565416},  it can also be proved that
$W|_{\cM_1}$ is a viscosity subsolution of $\HoneT(0,\partial _{x_1} W)=E$ on $\cM_1$
and that if  $\phi\in C^1(\R^2)$ is such that $W-\phi$ has a local minimum at $x\in \cM_1$, then
\begin{displaymath}
  \max\left( \HoneT({\YA{0}}, \partial_{x_1} \phi(x)),  \overline H({\YA{0}},D\phi(x))\right)\ge 0.
\end{displaymath}
The fact that $W|_{\cM_1}$ is a viscosity subsolution of $\HoneT(0,\partial _{x_1} W)=E$ on $\cM_1$ implies
\begin{equation}
  \label{eq:42}
E\ge \min_{p_1\in \R }  \HoneT (0,p_1),
\end{equation}
which is stronger than \eqref{eq:41} because of \eqref{eq:cor:E_bigger_than_E_0}.

\subsection{Upper bound on  $ \overline  u(0)$}
\label{sec:function--overline}
\begin{proposition}
  \label{sec:ergod-const-glob-1}
The upper limit $\overline u$ is such that $\alpha \overline u(0)\le -E$.
\end{proposition}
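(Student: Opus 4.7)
The plan is to follow Evans' perturbed test function method with the corrector $w^R$ constructed in Subsection \ref{sec:ergod-const-glob}. Since the target inequality $\alpha \overline u(0)\le -E$ is a pointwise condition that does not involve any smooth test function on $\overline u$, the smooth ``base'' of the perturbed test function reduces to a suitable constant, and the proof becomes the construction of a state-constrained supersolution of the $\varepsilon$-equation \eqref{eq:13} on the shrinking ball $\overline{B_{\varepsilon R}(0)}$, followed by a comparison with $u_\varepsilon$. Concretely, for every $R>R_0$ and every small $\varepsilon>0$, I set
\[
   v_\varepsilon^R(x)\;=\;c_\varepsilon^R\;+\;\varepsilon\,w^R\!\left(\tfrac{x}{\varepsilon}\right),\qquad x\in \overline{B_{\varepsilon R}(0)},
\]
where the constant $c_\varepsilon^R$ will be calibrated below.

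The supersolution property is inherited from \eqref{eq:37}, which reads $H(0,y,Dw^R)\ge E^R$ on $\overline{B_R(0)}$ in the viscosity sense, combined with (i) the uniform (in $R$) Lipschitz bound on $w^R$ and $w^R(0)=0$, and (ii) the continuity estimate \eqref{eq:9}, which bounds $|H(x,y,p)-H(0,y,p)|$ by $L_f|p||x|+\omega_\ell(|x|)$. Because $|x|\le \varepsilon R$ on $\overline{B_{\varepsilon R}(0)}$ and the gradients involved are uniformly bounded, one checks routinely that
\[
   \alpha\, v_\varepsilon^R\;+\;H\!\left(x,\tfrac{x}{\varepsilon},Dv_\varepsilon^R\right)\;\ge\;\alpha\,c_\varepsilon^R\;+\;E^R\;-\;\theta(\varepsilon,R)
\]
in the viscosity sense on $\overline{B_{\varepsilon R}(0)}$, where $\theta(\varepsilon,R)$ absorbs the drift error $L_f\|Dw^R\|_\infty\,\varepsilon R$, the modulus $\omega_\ell(\varepsilon R)$, and the discount term $\alpha\varepsilon\|w^R\|_{L^\infty(B_R)}$, and satisfies $\theta(\varepsilon,R)\to 0$ as $\varepsilon\to 0$ with $R$ fixed. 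Choosing $c_\varepsilon^R=-E^R/\alpha+\theta(\varepsilon,R)/\alpha$ then makes $v_\varepsilon^R$ a state-constrained viscosity supersolution of \eqref{eq:13} on $\overline{B_{\varepsilon R}(0)}$.

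To conclude, I invoke the standard comparison principle for state-constrained Bellman equations \cite{MR838056,MR951880}: applied to the subsolution $u_\varepsilon$ on $\overline{B_{\varepsilon R}(0)}$ and to the state-constrained supersolution $v_\varepsilon^R$, it yields $u_\varepsilon\le v_\varepsilon^R$ on the whole ball $\overline{B_{\varepsilon R}(0)}$ with no boundary condition required. Evaluating at $x=0$ and using $w^R(0)=0$,
\[
   u_\varepsilon(0)\;\le\;v_\varepsilon^R(0)\;=\;c_\varepsilon^R\;=\;-\frac{E^R}{\alpha}+\frac{\theta(\varepsilon,R)}{\alpha}.
\]
Taking $\limsup$ as $\varepsilon\to 0$ gives $\overline u(0)\le -E^R/\alpha$ for each $R>R_0$; then letting $R\to\infty$ and using \eqref{eq:39} yields $\alpha\overline u(0)\le -E$, as claimed. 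The main obstacle is the rigorous verification that $v_\varepsilon^R$ satisfies the supersolution inequality \emph{up to} the boundary of $B_{\varepsilon R}(0)$ — this is the state-constraint feature, inherited from the fact that $w^R$ is itself the value function of the state-constrained problem \eqref{eq:35} — since it is precisely this property that allows the comparison principle to be applied without any boundary matching between $u_\varepsilon$ and $v_\varepsilon^R$.
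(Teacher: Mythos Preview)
Your proof is correct and follows essentially the same route as the paper: build a state-constrained supersolution on $\overline{B_{\varepsilon R}(0)}$ from the truncated corrector $w^R$, compare with $u_\varepsilon$ via the Soner--Capuzzo-Dolcetta--Lions comparison principle, evaluate at the origin, and pass to the limits $\varepsilon\to 0$ then $R\to\infty$. The only cosmetic difference is that the paper argues by contradiction (assuming $\alpha\overline u(0)+E=\theta>0$ and absorbing the slow-variable freezing error on the subsolution side), whereas you proceed directly and absorb that error on the supersolution side; both are equivalent.
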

\begin{proof}
   Let us proceed by contradiction and assume that 
  \begin{equation}
    \label{eq:43}
 \alpha \overline u(0) +E = \theta >0.
  \end{equation}
 Using $w^R$ defined in Subsection~\ref{sec:ergod-const-glob} (recall  $w^R (0)=0$), let us set
\begin{displaymath}
  \phi^{\varepsilon, R}= \overline u (0)+ \varepsilon w^R (\frac x \varepsilon).
\end{displaymath}
We deduce  from (\ref{eq:37}) and  \eqref{eq:43}  that $ \phi^{\varepsilon, R}$ is a viscosity supersolution of 
 \begin{displaymath}
  \alpha \phi^{\varepsilon, R}(x) + H\left(0,\frac x \varepsilon, D \phi^{\varepsilon, R}  \right)\ge \alpha \varepsilon w^R\left(\frac x \varepsilon\right)+E^R-E+\theta\quad \hbox{in  } \overline{B_{\varepsilon R}(0)}.
\end{displaymath}
%With $R_0$ fixed in paragraph \ref{sec:setting}, see   \eqref{eq:1},
There exists $r>0$
% $r>R_0$
such that  $E^R - E\ge -\frac \theta 4$ for any $R\ge r$. Let us fix such a value of $R$. 
Having fixed $R$, we see that  for $\varepsilon_0$ sufficiently small and any $\varepsilon$ such that $0<\varepsilon\le \varepsilon_0$,
\begin{displaymath}
  \alpha \varepsilon w^{R}\left(y\right) \ge -\frac \theta 4 \quad \hbox{for any } y\in \overline{B_R(0)}.
\end{displaymath}
We deduce that, for any $\varepsilon\le \varepsilon_0$,
\begin{equation}\label{eq:44}
  \alpha \phi^{\varepsilon, R} + H\left(0,\frac x \varepsilon, D \phi^{\varepsilon, R}  \right)\ge
 \frac \theta 2 \quad \hbox{in  } \overline {B_{\varepsilon R}(0)}.
\end{equation}

Next, using \eqref{eq:21}, consider a vanishing sequence $0<\varepsilon_n\le \varepsilon_0$ such that  $u_ {\varepsilon_n}(0)$ tends to $\overline u (0)$.

We know that $u_ {\varepsilon_n}$ satisfies in the sense of viscosity
\begin{displaymath}
  \alpha \, u_ {\varepsilon_n} + H\left(x,\frac x {\varepsilon_n}, Du_ {\varepsilon_n} \right)\le
  0 \quad \hbox{in  } B_{\varepsilon_n R}(0).
\end{displaymath}
Recall that $u_ {\varepsilon_n}$  is Lipschitz continuous with a Lipschitz constant uniform in $\varepsilon_n$ and $R$. From this observation and from \eqref{eq:9}, we see that for $\varepsilon_n$ sufficiently small, $ u_ {\varepsilon_n}$  satisfies in the sense of viscosity
\begin{equation} \label{eq:45}
  \alpha \, u_ {\varepsilon_n} + H\left(0,\frac x {\varepsilon_n}, Du_ {\varepsilon_n} \right)\le
  \frac \theta  4 \quad \hbox{in  } B_{\varepsilon_n R}(0).
\end{equation}
From  \eqref{eq:44} and \eqref{eq:45}, the comparison principle for Hamilton-Jacobi equations with state constraints,  see \cite{MR838056,MR951880} and \cite[Th. 5.8, Chapter IV, page 278]{MR1484411}, then implies that
\begin{displaymath}
  \phi^{{\varepsilon_n}, R}- \frac  \theta {4 \alpha}\ge  u_ {\varepsilon_n}  \quad \hbox{in  } B_{\varepsilon_n R}(0),
\end{displaymath}
Taking $x=0$ and letting $n$ tend to $+\infty$ yields $  \overline u (0)- \frac  \theta {4 \alpha}\ge \overline u(0)$, the desired contradiction.
\end{proof}

\subsection{Study of $\underline u$ near the origin}
\label{sec:function--underline}

We first state and prove two technical lemmas, namely  Lemma \ref{lem:2} and  \ref{lem:3},
where the fixed vector $p\in \R^2$ is a placeholder for the gradient $D\phi$ of the test-function  at the origin  that will later be used (in Propositions  \ref{sec:function--underline-2} through   \ref{sec:function--underline-4})
in order to establish  the properties of item 3. within Theorem \ref{sec:main-result-1}.

\begin{lemma} \label{lem:2}
  Consider $p\in \R^2$. If $    \max(E,\HoneT(0,p_1))<\overline H (0,p)$, then there exists
  a Lipschitz function $\chi:\R^2\to \R$ such that
  \begin{eqnarray}
    \label{eq:46}
\chi(y)\le p\cdot y,\quad \hbox{for all } y\in \R^2,
%    \limsup_{|y|\to\infty} \frac {\chi(y)-p\cdot y }{|y|}\le 0,
    \\
    \label{eq:47}   H(0,y,D\chi(y))\le \overline H(0,p), \quad \hbox{in }\R^2,
  \end{eqnarray}
  where \eqref{eq:47} is understood in the viscosity sense.
\end{lemma}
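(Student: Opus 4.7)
Write $C := \overline{H}(0,p)$; by hypothesis, $C > \max(E, \HoneT(0,p_1))$. My plan is to construct $\chi$ by Perron's method, as the supremum of all Lipschitz viscosity subsolutions of $H(0,\cdot,Du)\le C$ lying below $y\mapsto p\cdot y$. Three subsolution building blocks are available, each tied to one of the hypotheses: (i) the affine function $y\mapsto p\cdot y$ is an equality subsolution of $H(0,y,\cdot)\le C$ on $\R^2\setminus\Omega$ by \eqref{eq:11}; (ii) the cell-problem corrector $y\mapsto p_1 y_1 + \xi(0,p_1,y)$ is, by \eqref{cellpE}, a subsolution of $H_{1,\per}(0,y,\cdot)\le \HoneT(0,p_1)\le C$ on $\R^2$, hence of $H(0,y,\cdot)\le C$ on the region $\{y_1\le 0\}\cup\{|y_2|\ge R_0\}$ where $H=H_{1,\per}$; (iii) the global ergodic corrector $w$ of Subsection~\ref{sec:ergod-const-glob} satisfies $H(0,y,Dw)=E\le C$ everywhere on $\R^2$.

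\textbf{Producing a witness and the Perron step.} Since $C>\HoneT(0,p_1)$, Definition~\ref{def:3} forces $p_2\notin[\underline{\Pi}(0,p_1),\overline{\Pi}(0,p_1)]$; I shall assume without loss of generality that $p_2>\overline{\Pi}(0,p_1)$, the other case being symmetric. Proposition~\ref{cor:control_slopes_W} then implies that $\xi(0,p_1,y)$ grows asymptotically no faster than $\overline{\Pi}(0,p_1)\,y_{2,+}-\underline{\Pi}(0,p_1)\,y_{2,-}$, so $p_1 y_1 + \xi(0,p_1,y) - p\cdot y$ tends to $-\infty$ linearly as $y_2\to+\infty$. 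I would patch the three ingredients above, with constant shifts tuned using these asymptotics together with the Lipschitz estimates for $\xi$ and $w$, and take the pointwise maximum to produce a single global Lipschitz subsolution $v^\star$ bounded above by $p\cdot y$. Setting
\[
\chi(y) := \sup\{v(y) : v\in\mathrm{Lip}(\R^2),\ v\le p\cdot y \text{ on } \R^2,\ H(0,\cdot,Dv)\le C\ \text{in the viscosity sense}\},
\]
Perron's lemma will then yield that the upper semicontinuous envelope of $\chi$ is a viscosity subsolution of $H(0,\cdot,D\chi)\le C$, while $\chi\le p\cdot y$ holds by construction. Finally, the coercivity estimate \eqref{eq:8} provides the uniform bound $|D\chi|\le (C+M_\ell)/r_f$ on every competitor, which would transfer to $\chi$ and give \eqref{eq:46}--\eqref{eq:47}.

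\textbf{Main obstacle.} The technical crux is the construction of the witness $v^\star$, and in particular the verification that its three pieces glue into a genuine global viscosity subsolution. The bounded bulb $B_{R_0}(0)\cap\{y_1>0\}$, where $H$ coincides neither with $\overline{H}$ nor with $H_{1,\per}$, is the most delicate region: neither ingredient (i) nor ingredient (ii) is directly applicable there. This is precisely where the hypothesis $C>E$ becomes essential, entering through the global corrector $w$ in (iii), which after a suitable constant shift can dominate the other pieces on the bulb while remaining below $p\cdot y$ on this bounded set. The rest of the argument is a careful gluing in the spirit of \cite{MR3565416,MR4643677}.
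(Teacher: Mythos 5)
There is a genuine gap, and it is structural rather than technical. First, note the Perron wrapper is superfluous: the lemma only asks for one Lipschitz subsolution below $y\mapsto p\cdot y$, so everything rests on your witness $v^\star$, and the max-gluing you propose cannot produce it. If the patch must satisfy \eqref{eq:46}, every ingredient of a pointwise maximum has to lie below $p\cdot y$ on all of $\R^2$; but then, at any point of the unbounded defect region $\Omega$ where the shifted pieces $p_1y_1+\xi(0,p_1,\cdot)-c$ and $w-C$ are strictly below $p\cdot y$ (which happens on most of $\Omega$, since neither can coincide with $p\cdot y$ there), the maximum equals $p\cdot y$, which is not a subsolution of \eqref{eq:47} inside $\Omega$ — so the very constraint \eqref{eq:46} defeats the max construction exactly where ingredient (i) fails. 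Moreover the required global domination is false anyway: keeping the corrector at the slope $p_1$, its transverse behaviour is governed by $[\underline{\Pi}(0,p_1),\overline{\Pi}(0,p_1)]$ (Propositions \ref{cor:slopes_omega}--\ref{cor:control_slopes_W}), so in your case $p_2>\overline{\Pi}(0,p_1)$ the piece $p_1y_1+\xi(0,p_1,y)-c$ eventually exceeds $p\cdot y$ as $y_2\to-\infty$, whatever constant $c$ you subtract; and nothing forces $w-C\le p\cdot y$ off a bounded set, since $w$ is merely Lipschitz with $H(0,y,Dw)=E$. (A side remark: the upper bound on the growth of $\xi$ you invoke is not what Proposition \ref{cor:control_slopes_W} provides — it controls the blow-down limit $\Xi$ along a subsequence, not $\xi$ itself.)

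Second, even if you switch to the correct gluing for convex Hamiltonians, namely a minimum (Lemma \ref{lemma_appendix}), the slope-$p_1$ corrector still does not work, and this is where your diagnosis of the "main obstacle" is off. The delicate region is not the bounded bulb (which $w-C$ handles after choosing $C$ on $\overline{B_{R_1}(0)}$), but the unbounded right half-strip $\{y_1>0,\ |y_2|<R_0\}$ outside the bulb: there $H(0,y,\cdot)=\overline H(0,\cdot)$ while in general $H_{1,\per}(0,y,\cdot)\neq\overline H(0,\cdot)$, so $p_1y_1+\xi(0,p_1,\cdot)-c$ is not a subsolution of \eqref{eq:47}, yet it has the same longitudinal slope as $p\cdot y$ and stays comparable to it on the strip, so nothing expels it from the minimum there. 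The paper's proof uses the hypothesis $\overline H(0,p)>\HoneT(0,p_1)$ precisely to fix this: by convexity and coercivity of $\HoneT(0,\cdot)$ one picks the unique $q_1>p_1$ with $\HoneT(0,q_1)=\overline H(0,p)$ and sets $\chi=\min\bigl(w-C,\ q_1y_1+\xi(0,q_1,\cdot)-c,\ p\cdot y\bigr)$; because $q_1>p_1$, the tilted corrector lies below $p\cdot y$ on $\overline\Omega$ (where $y_1$ is bounded above) but above it on the far-right strip $D$, so in every region the minimum is attained only by functions that are genuine subsolutions there. Your use of the hypothesis only through $p_2\notin[\underline{\Pi}(0,p_1),\overline{\Pi}(0,p_1)]$ exploits the transverse direction, which gives no leverage on the strip $|y_2|\le R_0$ where the whole difficulty sits; the key idea of re-slanting the longitudinal slope is missing, and without it the gluing cannot be closed.
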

\begin{proof}
 From the convexity of $\HoneT(0,\cdot)$, there exists a unique $q_1>p_1$ such that $\HoneT(0,q_1)=\overline H(0,p)$.
  Because $q_1>p_1$,
  there  exists a constant $c>0$ such that $q_1y_1+\xi(0,q_1,y)-c< p\cdot y$ for all $y\in\overline \Omega$, recalling that $\xi(0,q_1,\cdot)$ is defined in paragraph \ref{correctors_and_cell_pb}.
 For such a constant $c$, there exists $R_1>R_0$ (recall that $R_0$ is fixed in paragraph \ref{sec:setting}, see   \eqref{eq:1}) such that
  \begin{equation}\label{eq:48}
    q_1y_1+\xi(0,q_1,y) -c > p\cdot y,\quad \hbox{for all } y\in D,
  \end{equation}
  where
  \begin{equation}
    \label{eq:49}
    D= \{y\in \R^2\;:\;  y_1>0, \,  |y| \ge R_1  \, |y_2|\le R_0 \}
  \end{equation}
Note that $\overline H (0,  q_1e_1+D\xi(0,q_1,y) ) = \overline H(0,p)$ in $|y_2|>R_0$. 
\\
With $w$ defined in Subsection \ref{sec:ergod-const-glob},
   we may then choose $C>0$ such that $w(y)-C <  \min(q_1y_1+\xi(0,q_1,y)-c, p\cdot y)  $  for all $y\in \overline { B_{R_1}(0)}$.
   We deduce that the function $\chi: y\mapsto \min(  w(y)-C, q_1y_1+\xi(0,q_1,y)-c, p\cdot y) $ is Lipschitz continuous  as the minimum of three Lipschitz continuous functions.
To prove that $\chi$ is also an almost everywhere subsolution of   \eqref{eq:47},  we split the space~${\mathbb R}^2$ into four regions (the first two of that overlap each other):
\begin{description}
\item[(i)] for $y\in \overline{B_{R_1}(0)}$, the constant~$C$ has been chosen so that $\chi(y)\,=w(y)-C$. Thanks to   \eqref{eq:40} and because $\overline H(0,p)>E$
$w-C$ satisfies \eqref{eq:47} in the sense of viscosity and almost everywhere in  $\overline{B_{R_1}(0)}$.
\item[(ii)] for $y\in \overline{\Omega}$, the constant~$c$ has been chosen so that $q_1\,y_1+\xi (0,q_1,y)-c\,<\, p.y$, 
  so  $\chi(y) = \min (q_1\,y_1+\xi (0,q_1,y)-c, w(y)-C)$. Both $y\mapsto q_1\,y_1+\xi (0,q_1,y)-c$ and $w-C$ are subsolutions of \eqref{eq:47} in $ \overline{\Omega}$. Hence $\chi$ is an almost everywhere  subsolution of \eqref{eq:47} in $ \overline{\Omega}$, see Lemma \ref{lemma_appendix} in the appendix.
  Note that in general, $y\mapsto p\cdot y$ is not a  subsolution of \eqref{eq:47} in $ \overline{\Omega}$.
\item[(iii)] Let $D$ be the set defined in \eqref{eq:49}.   If  $y_1\in D$, then  $\chi(y) = \min (p\cdot y, w(y)-C)$, from the choice of $R_1$. Both $y\mapsto p\cdot y$ and $w-C$ are viscosity subsolutions of \eqref{eq:47}, so $\chi$ is an almost everywhere subsolution of  \eqref{eq:47} in $D$.   Note that   $ y\mapsto q_1y_1+\xi(0,q_1,y)-c$ is generally  not a subsolution of  \eqref{eq:47} in $D$.
\item[(iv)] If $y\in \R^2 \setminus (  \overline{B_{R_1}(0)} \cup  \overline{\Omega} \cup D    )$, then $\chi(y)$ may coincide with  $w(y)-C$ or  $q_1y_1+\xi(0,q_1,y)-c$ or $p\cdot y$. In this region, the latter three functions
 % $y\mapsto p\cdot y$, $w-C$ and  $ y\mapsto q_1e_1+\xi(0,q_1,y)$
  are viscosity subsolutions of  \eqref{eq:47}. Hence,  $\chi$ is an almost everywhere subsolution of  \eqref{eq:47} in $\R^2 \setminus (  \overline{B_{R_1}(0)} \cup  \overline{\Omega} \cup D    )$.
\end{description}
Finally, $\chi$ is almost everywhere a subsolution of  \eqref{eq:47} in $\R^2$.  Since  $q\mapsto H(0,y,q) $ is convex, we know from~\cite[Prop. 5.1, Chapter II]{MR1484411} that this Lipschitz continuous almost everywhere subsolution is also a viscosity subsolution of  \eqref{eq:47}.

Clearly, $\chi$ also satisfies   \eqref{eq:46}.
This concludes the proof.
 %   It is also an almost everywhere subsolution of   \eqref{eq:47}, in particular because
 %   \eqref{eq:48} guarantees that $\chi(y)=  \min(  w(y)-C, p\cdot y) $ if $y_1>0$,
 % $  |y| \ge R_1$ and  $|y_2|\le R_0$, so we do not have to worry about $H(0, y,  q_1e_1+D\xi(0,q_1,y)$ in this region.
 %   Hence, since  $q\mapsto H(0,y,q) $ is convex, we know from~\cite[Prop. 5.1, Chapter II]{MR1484411} that this Lipschitz continuous almost everywhere subsolution is also a viscosity subsolution of  \eqref{eq:47}. Clearly, $\chi$ also satisfies   \eqref{eq:46}.
\end{proof}

\begin{lemma} \label{lem:3}
  Consider $p\in \R^2$.\\ If $\max( E , \overline H (0,p)) < \HoneT(0,p_1)$ or 
   $E < \overline H (0,p)= \HoneT(0,p_1)$, then there exists  a Lipschitz function $\chi:\R^2\to \R$ such that
  \begin{eqnarray}
    \label{eq:50}\chi(y)\le p\cdot y,\quad \hbox{for all } y\in \R^2,
    %\limsup_{|y|\to\infty} \frac {\chi(y)-p\cdot y }{|y|}\le 0,
    \\
    \label{eq:51}   H(0,y,D\chi(y))\le \HoneT(0,p_1), \quad \hbox{in }\R^2,
  \end{eqnarray}
  where \eqref{eq:51} is understood in the viscosity sense.
\end{lemma}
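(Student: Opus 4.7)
My plan is to adapt the proof of Lemma \ref{lem:2}, simply swapping the target $\overline H(0,p)$ for $\HoneT(0,p_1)$ throughout. I would try the Ansatz
$$\chi(y) \;=\; \min\bigl(\,w(y) - C,\; q_1 y_1 + \xi(0, q_1, y) - c,\; p\cdot y\,\bigr),$$
where $w$ is the global corrector built in Subsection \ref{sec:ergod-const-glob}, $\xi(0,q_1,\cdot)$ is the periodic corrector of paragraph \ref{correctors_and_cell_pb}, and the parameters $q_1\in\R$ and $c,C>0$ are to be tuned. Each of the three pieces will be a viscosity subsolution of $H(0,y,Dv)\le \HoneT(0,p_1)$ on a dedicated region: $w-C$ globally (since \eqref{eq:40} gives $H(0,y,Dw)\le E$ and the hypotheses imply $E<\HoneT(0,p_1)$); the term $q_1y_1+\xi(0,q_1,y)-c$ on the set $\{y_1\le 0\}\cup\{|y_2|\ge R_0\}$, where $H$ coincides with $H_{1,\per}$, provided $\HoneT(0,q_1)\le \HoneT(0,p_1)$; and $p\cdot y$ on $\R^2\setminus \Omega$, where $H$ coincides with $\overline H$ and $\overline H(0,p)\le \HoneT(0,p_1)$ under both branches of the hypothesis.

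The crucial new ingredient compared with Lemma \ref{lem:2} is the selection of $q_1$: there, the strict inequality $\HoneT(0,p_1)<\overline H(0,p)$ allowed one to pick $q_1>p_1$ with $\HoneT(0,q_1)=\overline H(0,p)$, which made $q_1y_1+\xi(0,q_1,y)$ outgrow $p\cdot y$ as $y_1\to+\infty$ and hence drop out of the minimum on the region $D=\{y_1>0,\,|y|\ge R_1,\,|y_2|\le R_0\}$ (a drop-out essential because, on $D$, $H=\overline H\ne H_{1,\per}$ and the $\xi$-term is generally not a subsolution of $H$). Here one must instead pick $q_1$ inside the sublevel set $\mathcal S:=\{\tilde q_1:\HoneT(0,\tilde q_1)\le \HoneT(0,p_1)\}$; by the convexity and coercivity of $\HoneT(0,\cdot)$ in Lemma \ref{lem:regularity_of_ergodic_hamiltonian}, $\mathcal S$ is a closed bounded interval $[\tilde q_1^{-},\tilde q_1^{+}]$. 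Using Corollary \ref{cor:E_bigger_than_E_0}, the inequality $\overline H(0,p)\le \HoneT(0,p_1)$, and the lower bound $\min_{q\in\R}\overline H(0,p_1e_1+q e_2)\le \overline H(0,p)\le \HoneT(0,p_1)$, one can show that $\mathcal S$ is nondegenerate and $p_1\ne \tilde q_1^{+}$, so that the choice $q_1=\tilde q_1^{+}>p_1$ is legitimate and supplies the desired growth relation.

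With this $q_1>p_1$ at hand, the rest of the argument mirrors Lemma \ref{lem:2} step by step: choose $c>0$ so that $q_1y_1+\xi(0,q_1,y)-c<p\cdot y$ on $\overline\Omega$ (possible since $\xi(0,q_1,\cdot)$ is bounded on $\overline\Omega$ by $y_1$-periodicity), next choose $R_1>R_0$ so that the reverse inequality $q_1y_1+\xi(0,q_1,y)-c>p\cdot y$ holds on $D$ (here the growth $(q_1-p_1)y_1$ is what makes this feasible), and finally pick $C>0$ large so that $w-C$ is the smallest term on $\overline{B_{R_1}(0)}$. Partitioning $\R^2$ into the same four regions (i)--(iv) as in the proof of Lemma \ref{lem:2}, one checks that $\chi$ locally coincides with subsolutions of the target equation, concludes the almost-everywhere subsolution property via the appendix lemma, and upgrades this to a viscosity subsolution of \eqref{eq:51} through the convexity of $H(0,y,\cdot)$ and \cite[Prop. 5.1, Chapter II]{MR1484411}; the bound \eqref{eq:50} is then automatic. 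The main obstacle I anticipate is the nondegeneracy of $\mathcal S$ in the boundary case $E<\overline H(0,p)=\HoneT(0,p_1)$, where $\HoneT(0,p_1)$ might sit at the minimum of $\HoneT(0,\cdot)$; this may require a symmetric alternative $q_1<p_1$ or a small perturbative $q_1^{\varepsilon}\to p_1$ limiting procedure to produce the needed corrector.
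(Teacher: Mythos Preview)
Your three–term ansatz coincides with the paper's ``second case'' ($p_1<\tilde p_1$, i.e.\ one can pick $q_1>p_1$ with $\HoneT(0,q_1)=\HoneT(0,p_1)$), and in that case your argument is essentially the paper's. The gap is the claim that $p_1\neq\tilde q_1^{+}$: nothing in Corollary~\ref{cor:E_bigger_than_E_0} or in $\overline H(0,p)\le \HoneT(0,p_1)$ forces this. Since $\HoneT(0,p_1)>E\ge\min\HoneT(0,\cdot)$, the sublevel interval $\mathcal S=[\tilde q_1^{-},\tilde q_1^{+}]$ is nondegenerate, but $p_1$ may very well be its right endpoint (this is exactly the paper's ``first case'' $p_1>\tilde p_1$, i.e.\ $\HoneT(0,\cdot)$ increasing at $p_1$). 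When that happens, no $q_1>p_1$ lies in~$\mathcal S$, and your growth argument making the $\xi$-term inactive on $D=\{y_1>0,\,|y|\ge R_1,\,|y_2|\le R_0\}$ collapses. The suggested fixes do not help: with $q_1\le p_1$ the term $q_1y_1+\xi(0,q_1,y)-c$ \emph{remains} the active one in $D$, but on $D$ one has $H(0,y,\cdot)=\overline H(0,\cdot)\neq H_{1,\per}(0,y,\cdot)$ (since $|y_2|<R_0$), so this term is not a subsolution of \eqref{eq:51} there; and the perturbation $q_1^\varepsilon\searrow p_1$ forces $R_1\to\infty$, hence $C\to\infty$, so no limiting $\chi$ survives.

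The paper resolves the case $p_1=\tilde q_1^{+}$ by keeping $\xi(0,p_1,\cdot)$ (not $\xi(0,q_1,\cdot)$) and inserting a \emph{fourth} ingredient, the affine function $y\mapsto \tilde p_1 y_1+\overline\Pi(0,\tilde p_1)\,y_2$ with $\tilde p_1=\tilde q_1^{-}<p_1$. By the definition of $\overline\Pi$ one has $\overline H\bigl(0,\tilde p_1 e_1+\overline\Pi(0,\tilde p_1)e_2\bigr)=\HoneT(0,\tilde p_1)=\HoneT(0,p_1)$, so this affine map is a subsolution of \eqref{eq:51} wherever $H=\overline H$, in particular on $D$; and since $\tilde p_1<p_1$, it can be made the smallest of the four terms on $D$ for $R_1$ large. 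Your proposal is therefore correct only on half of the parameter range; the missing half requires this extra affine piece.
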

\begin{proof}
  Because of \eqref{eq:42},  $\HoneT(0,p_1) > E \ge \min \HoneT(0,\cdot)$. Hence, there exists $\tilde p_1\not = p_1$ such that  $ \HoneT(0,p_1)= \HoneT(0,\tilde p_1)$. 

  \begin{description}
  \item[First case:  $p_1> \tilde p_1$]

 It is possible to choose $c>0$ sufficiently large such that
\begin{equation}
  \label{eq:52}
  p_1y_1 +\xi (0,p_1,y)-c < \min \Bigl(
  p\cdot y,    \tilde p_1y_1+\overline \Pi(0,\tilde p_1) y_2\Bigr )    \quad \hbox{for all } y\in  \overline \Omega,
\end{equation}
recalling that $\xi(0,p_1,\cdot)$ and $\overline \Pi(0,\tilde p_1)$ are defined in paragraph \ref{correctors_and_cell_pb}, and in particular that
\begin{equation}
  \label{eq:53}
  H(0,y,\tilde p_1 e_1+\overline\Pi(\tilde p_1) e_2)=\HoneT(0,\tilde p_1)=\HoneT(0,p_1).
\end{equation}

Let us set $\chi_1(y)=\min(p\cdot y,  p_1y_1 +\xi (0,p_1,y)-c)$.
  Because   $p_1> \tilde p_1$,   it is  possible to choose $R_1>R_0$ such that
  \begin{eqnarray}
    \label{eq:54}
    \tilde p_1y_1+\overline \Pi(0,\tilde p_1) y_2  <
    \chi_1(y) \quad  \hbox{for all } y\in D,
  \end{eqnarray}
  where $D=\{y\in \R^2:\; y_1>0,\; |y| \ge R_1, \; |y_2|\le R_0\}$, and  $R_0$ is the fixed radius introduced in paragraph  \ref{sec:setting}, see   \eqref{eq:1}.\\
  Let us set  $\chi_2(y)=\min( \tilde p_1y_1 +\overline \Pi(0,\tilde p_1) y_2, \chi_1(y))$.
 With $w$ defined in Subsection \ref{sec:ergod-const-glob}, we may choose $C>0$ sufficiently large  such that
\begin{equation}
  \label{eq:55} w(y)-C <  \chi_2(y),  \quad \hbox{for all } y\in 
 \overline {B_{R_1}(0)}.
\end{equation}
Consider the function  $\chi: y\mapsto \min\Bigl(w(y)-C,\chi_2(y)\Bigr)$. To prove that $\chi$ is also an almost everywhere subsolution of   \eqref{eq:51},  we split the space~${\mathbb R}^2$ into four regions:
\begin{description}
\item[(i)] for $y\in \overline{B_{R_1}(0)}$,  $\chi(y)\,=w(y)-C$, and
from  \eqref{eq:40} and \eqref{eq:42},
  $w-C$ satisfies \eqref{eq:51} in the sense of viscosity and almost everywhere in  $\overline{B_{R_1}(0)}$.
\item[(ii)] for $y\in \overline{\Omega}$, the constant~$c$ has been chosen so that
   $\chi(y) = \min (p_1\,y_1+\xi (0,p_1,y)-c, w(y)-C)$. Both $y\mapsto p_1\,y_1+\xi (0,p_1,y)-c$ and $w-C$ are subsolutions of \eqref{eq:51} in $ \overline{\Omega}$. Hence $\chi$ is an almost everywhere  subsolution of \eqref{eq:51} in $ \overline{\Omega}$.
  Note that $y\mapsto p\cdot y$ and   $\tilde p_1y_1+\overline \Pi(0,\tilde p_1) y_2$   are  generally not  subsolutions of \eqref{eq:51} in $ \overline{\Omega}$.
\item[(iii)] Recall that  $D= \{y\;:\;  y_1>0, \,  |y| \ge R_1  \, |y_2|\le R_0 \}$.   If  $y\in D$, then  $\chi(y) = \min (\tilde p_1y_1+\overline \Pi(0,\tilde p_1) y_2, w(y)-C)$, from the choice of $R_1$. From \eqref{eq:53}, \eqref{eq:40} and \eqref{eq:42}, both $y\mapsto \tilde p_1y_1+\overline \Pi(0,\tilde p_1) y_2$ and $w-C$ are viscosity subsolutions of \eqref{eq:51}, so $\chi$ is an almost everywhere subsolution of  \eqref{eq:51} in $D$.   Note that  $ y\mapsto p_1y_1+\xi(0,p_1,y)-c$ may not be a subsolution of  \eqref{eq:51} in $D$.
\item[(iv)] If $y\in \R^2 \setminus (  \overline{B_{R_1}(0)} \cup  \overline{\Omega} \cup D    )$, then $\chi(y)$ may coincide with 
 $w(y)-C$,  $p_1y_1+\xi(0,p_1,y)-c$,  $ \tilde p_1y_1+\overline \Pi(0,\tilde p_1) y_2$   or $p\cdot y$. In this region, the  latter four functions are viscosity subsolutions of  \eqref{eq:51}. Hence,  $\chi$ is an almost everywhere subsolution of  \eqref{eq:51} in $\R^2 \setminus (  \overline{B_{R_1}(0)} \cup  \overline{\Omega} \cup D    )$.
\end{description}
The function
  $\chi: y\mapsto \min\Bigl(w(y)-C,\chi_2(y)\Bigr)$
 has all the desired properties.    
\item[Second case:  $p_1< \tilde p_1$]
  It is possible to choose $c>0$ sufficiently large  such that
\begin{equation*}
  \label{eq:2024bis}
  \tilde p_1y_1 +\xi (0,\tilde p_1,y)-c < p\cdot y,
  % \min \Bigl(
  % \chi_1(y),    \tilde p_1y_1+\overline \Pi(0,\tilde p_1) y_2\Bigr )
  \quad \hbox{for all } y\in  \overline \Omega.
\end{equation*}
 Because   $p_1< \tilde p_1$,   it is  possible to choose $R_1>R_0$ such that
  \begin{eqnarray*}
  %  \label{eq:2025bis}
    p \cdot y < \tilde p_1y_1 +\xi (0,\tilde p_1,y)-c,
    \quad  \hbox{for all } y\in  D
  \end{eqnarray*}
   with $D=\{y\in
    \R^2\;:
y_1>0,\; |y| \ge R_1, \; |y_2|\le R_0\}$.      

  Finally, we may choose $C>0$ sufficiently large  such that
\begin{equation*}
  \label{eq:2026bis} w(y)-C <  \min \Bigl( p\cdot y,  \tilde p_1y_1 +\xi (0,\tilde p_1,y)-c \Bigr) ,  \quad \hbox{for all } y\in 
 \overline {B_{R_1}(0)}.
\end{equation*}
The function $\chi(y)=\min \Bigl( w(y)-C , p\cdot y,  \tilde p_1y_1 +\xi (0,\tilde p_1,y)-c \Bigr)$ has all the desired properties. Indeed, we can argue exactly as in the proof of lemma \ref{lem:2} replacing $q_1$ with $\tilde p_1$, but we write the proof for clarity. To prove that $\chi$ is also an almost everywhere subsolution of   \eqref{eq:51},  we split the space~${\mathbb R}^2$ into four regions:
\begin{description}
\item[(i)] for $y\in \overline{B_{R_1}(0)}$,  $\chi(y)\,=w(y)-C$, and $w-C$ satisfies \eqref{eq:51} in the sense of viscosity and almost everywhere in  $\overline{B_{R_1}(0)}$.
\item[(ii)] for $y\in \overline{\Omega}$, the constant~$c$ has been chosen so that
   $\chi(y) = \min (\tilde p_1\,y_1+\xi (0,\tilde p_1,y)-c, w(y)-C)$. Both $y\mapsto \tilde p_1 y_1+\xi (0,\tilde p_1,y)-c$ or $w-C$ are subsolutions of \eqref{eq:51} in $ \overline{\Omega}$. Hence $\chi$ is an almost everywhere  subsolution of \eqref{eq:51} in $ \overline{\Omega}$.
  Note that $y\mapsto p\cdot y$  is not in general  a subsolution of \eqref{eq:51} in $ \overline{\Omega}$.
\item[(iii)] Set $D= \{y\;:\;  y_1>0, \,  |y| \ge R_1  \, |y_2|\le R_0 \}$.   If  $y_1\in D$, then  $\chi(y) = \min ( p\cdot y, w(y)-C)$, from the choice of $R_1$. Both $y\mapsto p\cdot y$ and $w-C$ are viscosity subsolutions of \eqref{eq:51}, so $\chi$ is an almost everywhere subsolution of  \eqref{eq:51} in $D$.   Note that  $ y\mapsto \tilde p_1y_1+\xi(0,\tilde p_1,y)-c$ is generally not  a subsolution of  \eqref{eq:51} in $D$.
\item[(iv)] If $y\in \R^2 \setminus (  \overline{B_{R_1}(0)} \cup  \overline{\Omega} \cup D    )$, then $\chi(y)$ may coincide with 
 $w(y)-C$,   $ \tilde p_1y_1++\xi(0,\tilde p_1,y)-c$   or $p\cdot y$. In this region, the  latter three functions are viscosity subsolutions of  \eqref{eq:51}. Hence,  $\chi$ is an almost everywhere subsolution of  \eqref{eq:51} in $\R^2 \setminus (  \overline{B_{R_1}(0)} \cup  \overline{\Omega} \cup D    )$.
\end{description}
  \end{description}
\end{proof}

Lemmas \ref{lem:2} and \ref{lem:3} are  used in the proofs of the following two propositions:
\begin{proposition}
  \label{sec:function--underline-2}
 For all $\phi\in C^1(\R^2)$  such that $0$ is a local minimizer of $\underline u -\phi$  and $ \overline H(0,D\phi(0))>\max (E, \HoneT(0, \partial_{1} \phi(0))$, we have
\begin{equation}
  \label{eq:57}
 \alpha \underline u(0)+ \overline H(0,D\phi(0)) \ge 0.
\end{equation}
\end{proposition}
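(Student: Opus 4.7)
The strategy is Evans' method of perturbed test functions, using the subcorrector from Lemma \ref{lem:2} in the role played by the usual local corrector in classical periodic homogenization. Suppose by contradiction that $\alpha\underline u(0) + \overline H(0,D\phi(0))<0$ strictly, and set $p=D\phi(0)$. Up to modifying $\phi$ by a smooth function vanishing to second order and subtracting a constant, I may assume $\underline u(0)=\phi(0)$ and that $0$ is a \emph{strict} local minimum of $\underline u-\phi$. Since the hypothesis $\overline H(0,p)>\max(E,\HoneT(0,p_1))$ is exactly the assumption of Lemma \ref{lem:2}, I obtain a Lipschitz function $\chi:\R^2\to\R$ with $\chi(y)\le p\cdot y$ and $H(0,y,D\chi(y))\le \overline H(0,p)$ in the viscosity sense on $\R^2$.

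I then introduce the perturbed test function
\[
\phi_\varepsilon(x) \,=\, \phi(x) + \varepsilon\bigl[\chi(x/\varepsilon) - p\cdot(x/\varepsilon)\bigr],
\]
which by \eqref{eq:46} satisfies $\phi_\varepsilon\le\phi$ on $\R^2$. Thanks to the viscosity inequality for $\chi$, the continuity estimate \eqref{eq:9} for $H$ in its first variable, and the smoothness of $\phi$, one verifies that for some $r,\theta,\varepsilon_0>0$, the function $\phi_\varepsilon$ is a strict viscosity subsolution of $\alpha v + H_\varepsilon(x,Dv) \le -\theta$ on $B_r(0)$ whenever $\varepsilon\in(0,\varepsilon_0)$.

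Set $M_\varepsilon := \max_{\overline{B_r(0)}}(\phi_\varepsilon - u_\varepsilon)$ and choose a subsequence $\varepsilon_n\to 0$ along which $u_{\varepsilon_n}(0)\to\underline u(0)$, so that $\phi_{\varepsilon_n}(0)-u_{\varepsilon_n}(0)\to 0$. The strict local minimum property of $\underline u-\phi$ at $0$ yields $\delta_r>0$ such that $\underline u-\phi\ge\delta_r$ on $\partial B_r(0)$; combined with $\phi_\varepsilon\le\phi$ and the equi-Lipschitz continuity of $\{u_\varepsilon\}$, this forces $\phi_{\varepsilon_n}<u_{\varepsilon_n}$ on $\partial B_r(0)$ for $n$ large, so that $M_{\varepsilon_n}$ is attained at an interior point $x_{\varepsilon_n}\in B_r(0)$. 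A standard doubling-of-variables argument (required since $\phi_\varepsilon$ is only Lipschitz) combines the strict subsolution property of $\phi_\varepsilon$ with the supersolution property of $u_\varepsilon$ at $x_{\varepsilon_n}$ to yield $M_{\varepsilon_n}\le -\theta/\alpha$, contradicting $M_{\varepsilon_n}\ge \phi_{\varepsilon_n}(0)-u_{\varepsilon_n}(0)\to 0$.

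The main obstacle is the boundary control on $\partial B_r(0)$: the subcorrector $\chi$ yields only the one-sided bound $\chi(y)\le p\cdot y$, so $\phi_\varepsilon$ is dominated by $\phi$ without converging uniformly to it. Rigorously ensuring that the maximum of $\phi_\varepsilon-u_\varepsilon$ is attained in the interior of $B_r(0)$, via a careful interplay between the strict minimum of $\underline u-\phi$, the equi-Lipschitz bound on the family $u_\varepsilon$, and the half-relaxed lim-inf structure, is the delicate point of the argument.
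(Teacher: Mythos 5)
Your proof is correct and takes essentially the same route as the paper: the same perturbed test function $\phi(x)-p\cdot x+\varepsilon\chi(x/\varepsilon)$ built from the subcorrector of Lemma \ref{lem:2}, and the same localization on $\partial B_r(0)$ combining the strict local minimum of $\underline u-\phi$ with the half-relaxed liminf and the equi-Lipschitz bound on $u_\varepsilon$. The only difference is presentational: the paper concludes by invoking the comparison principle for \eqref{eq:13} on $B_r(0)$ (keeping the boundary gap $k(r)/2$ and evaluating at $x=0$), whereas you re-derive that comparison by hand via an interior maximum point and doubling of variables, drawing the contradiction from the strict subsolution gap $\theta$ instead.
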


\begin{proof}
We can always assume that $\phi(0)=\underline u(0)$ and that $\underline u -\phi$ has a strict local minimum at the origin.
For brevity, let us set $p = D\phi(0)$.  Suppose by contradiction that
\begin{equation}
  \label{eq:58}
  \alpha \underline u(0)+ \overline H(0,p) =-\theta<0,
\end{equation}
Let $\chi$ be the function arising in Lemma \ref{lem:2}. Consider the  perturbed test-function
  \begin{displaymath}
 \phi_\varepsilon(x)=\phi(x) -  p\cdot x +\varepsilon \chi(\frac x \varepsilon).
  \end{displaymath}
 Note that  $D\phi(x) -  p$ tends to $0$ as $x\to 0$ and  $\phi(x) -  p\cdot x=\underline u(0) + o(|x|)=  (-\overline H(0,p)-\theta)/\alpha   + o(|x|) $
 from \eqref{eq:58}.

 From  the definition of $\phi_\varepsilon$, the latter two points,
 \eqref{eq:46} and the regularity properties of $H$  and finally \eqref{eq:47},  
we deduce that there exists $r_0>0$ and $\varepsilon_0>0$ such that for all $0<\varepsilon<\varepsilon_0 $ and 
$0<r<r_0$,  $ \phi_\varepsilon$ is a viscosity subsolution of 
\begin{equation}
  \label{eq:59}
  \alpha \phi_\varepsilon+ H_\varepsilon ( x , D\phi_\varepsilon )\le -\frac \theta 2 \quad \hbox{ in } B_r(0).
\end{equation}
On the other hand, since $0$ is a strict local minimizer of $\underline u -\phi$, there exists $r_1>0$ and a  function   $k: (0,r_1]\to (0,1]$, 
such that $\lim_{r\to 0} k(r)=0$ and  for any $r\in (0,r_1]$,
\begin{displaymath}
  \phi(x)\le \underline u (x ) -k(r) \quad \hbox{ on } \partial B_r(0).
\end{displaymath}
From \eqref{eq:46}, we know that for $x\not =0$, $\varepsilon \chi(\frac x \varepsilon)\le p\cdot x $.%{\color{red} + |x| o_{\varepsilon \to 0}(1)}$.

Using \eqref{eq:22}, this implies that  first fixing $r>0$ sufficiently small, we have for  $\varepsilon$ sufficiently small,
\begin{equation}
  \label{eq:60}
\phi_\varepsilon(x)    \le u_\varepsilon (x) -\frac {k(r)}  2   \quad  \hbox{ on } \partial B_r(0).
\end{equation}

From (\ref{eq:59}) and (\ref{eq:60}) and since $u_\varepsilon$ is a viscosity solution of (\ref{eq:13}), the comparison principle yields  that it is possible to choose $r>0$ such that, for $\varepsilon$ sufficiently small, $\phi_\varepsilon(x)\le  u_\varepsilon (x) -\frac {k(r)}  2$ in $\overline  {B_r(0)}$.

By choosing a sequence $\varepsilon_n$ such that $ u_{\varepsilon_n} (0)$ tends to $\underline u (0)$, we deduce that
$\phi(0)+\frac {k(r)}  2\le  \underline u(0) $, the desired contradiction.
\end{proof}

\begin{proposition}
  \label{sec:function--underline-3}
 For all $\phi\in C^1(\R^2)$  such that $0$ is a local minimizer of $\underline u -\phi$  and $  \HoneT(0,\partial_{1} \phi(0))>\max (E, \overline H(0, D\phi(0))$ or $ \HoneT(0,\partial_{1} \phi(0))=  \overline H(0, D \phi(0)>E$, we have
\begin{equation}
  \label{eq:61}
 \alpha \underline u(0)+  \HoneT(0, \partial_{1} \phi(0)) \ge 0.
\end{equation}
\end{proposition}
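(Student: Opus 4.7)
The proof follows the same template as Proposition~\ref{sec:function--underline-2}, simply replacing Lemma~\ref{lem:2} by Lemma~\ref{lem:3}. The plan is to argue by contradiction using Evans' perturbed test-function method, where the perturbation is now the subcorrector $\chi$ furnished by Lemma~\ref{lem:3} whose relevant viscosity inequality involves $\HoneT(0,p_1)$ rather than $\overline H(0,p)$.

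Without loss of generality, we assume that $\phi(0)=\underline u(0)$ and that $0$ is a strict local minimum of $\underline u-\phi$. Write $p=D\phi(0)$ and suppose, for contradiction, that
\begin{equation*}
\alpha\underline u(0)+\HoneT(0,p_1)=-\theta<0.
\end{equation*}
Under either hypothesis on $p$, Lemma~\ref{lem:3} provides a globally Lipschitz function $\chi:\R^2\to\R$ satisfying $\chi(y)\le p\cdot y$ and $H(0,y,D\chi(y))\le \HoneT(0,p_1)$ in the viscosity sense. I then consider the perturbed test-function
\begin{equation*}
\phi_\varepsilon(x)=\phi(x)-p\cdot x+\varepsilon\chi\!\left(\tfrac{x}{\varepsilon}\right).
\end{equation*}
Since $D\phi(x)-p=o(1)$ and $\phi(x)-p\cdot x=(-\HoneT(0,p_1)-\theta)/\alpha+o(|x|)$ near the origin, the continuity properties \eqref{eq:9}--\eqref{eq:10} of $H$ together with the subsolution inequality \eqref{eq:51} and the fact that $H_\varepsilon(x,\cdot)=H(x,x/\varepsilon,\cdot)$ yield, for $r>0$ and $\varepsilon>0$ both sufficiently small,
\begin{equation*}
\alpha\phi_\varepsilon+H_\varepsilon(x,D\phi_\varepsilon)\le -\tfrac{\theta}{2}\quad\text{in }B_r(0),
\end{equation*}
in the viscosity sense.

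For the boundary comparison, the strict minimality of $0$ for $\underline u-\phi$ furnishes a modulus $k$ with $\phi(x)\le \underline u(x)-k(r)$ on $\partial B_r(0)$. The inequality $\varepsilon\chi(x/\varepsilon)\le p\cdot x$ from \eqref{eq:50} gives $\phi_\varepsilon(x)\le \phi(x)$ for $x\neq 0$, so combined with $\underline u(x)=\liminf_{\varepsilon\to 0}u_\varepsilon(x)$ one obtains, for $r$ then $\varepsilon$ small enough,
\begin{equation*}
\phi_\varepsilon(x)\le u_\varepsilon(x)-\tfrac{k(r)}{2}\quad\text{on }\partial B_r(0).
\end{equation*}
The standard comparison principle for \eqref{eq:13} in $B_r(0)$ then upgrades this bound to all of $\overline{B_r(0)}$, and evaluating at $x=0$ along a sequence $\varepsilon_n$ with $u_{\varepsilon_n}(0)\to\underline u(0)$ gives $\underline u(0)\ge\phi(0)+k(r)/2=\underline u(0)+k(r)/2$, the desired contradiction.

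The only non-routine step is verifying that $\phi_\varepsilon$ is genuinely a viscosity subsolution of the perturbed inequality in $B_r(0)$; this requires the Lipschitz regularity of $\chi$ (so that adding a small $C^1$ perturbation of the linear term $p\cdot x$ is absorbed via the modulus of continuity of $H$ in its first two arguments), and the fact that $y\mapsto H(0,y,\cdot)$ approximates $y\mapsto H(x,y,\cdot)$ uniformly for $x$ close to $0$ thanks to \eqref{eq:9}. All remaining considerations are identical to those used in the proof of Proposition~\ref{sec:function--underline-2}.
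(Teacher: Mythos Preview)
Your proof is correct and follows exactly the approach of the paper, which simply states that the argument is identical to that of Proposition~\ref{sec:function--underline-2} with $\chi$ now taken from Lemma~\ref{lem:3}. You have in fact written out more of the details than the paper does.
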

\begin{proof}
  The proof is identical to that of Proposition  \ref{sec:function--underline-3} except that $\chi$ is now chosen as the function appearing in Lemma \ref{lem:3}. 
\end{proof}

\begin{proposition}
  \label{sec:function--underline-4}
 For all $\phi\in C^1(\R^2)$  such that $0$ is a local minimizer of $\underline u -\phi$  and $  E \ge  \max ( \HoneT(0,\partial_{1} \phi(0)), \overline H(0, D \phi(0))$,  we have
\begin{equation}
  \label{eq:62}
 \alpha \underline u(0)+ E \ge 0.
\end{equation}
\end{proposition}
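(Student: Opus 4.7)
The plan is to mimic the structure used in Propositions~\ref{sec:function--underline-2} and \ref{sec:function--underline-3}, with the global corrector $w$ built in Subsection~\ref{sec:ergod-const-glob} playing the central role instead of $\xi(0,p_1,\cdot)$ or the slope-function $\overline\Pi$. I would argue by contradiction, supposing that
\begin{equation*}
\alpha\,\underline u(0)+E=-\theta<0,
\end{equation*}
after the harmless reductions $\phi(0)=\underline u(0)$ and $0$ is a \emph{strict} local minimizer of $\underline u-\phi$. Writing $p=D\phi(0)$, the aim is to build a Lipschitz function $\chi:\R^2\to\R$ such that $\chi(y)\le p\cdot y$ for all $y\in\R^2$ and $H(0,y,D\chi(y))\le E$ in the viscosity sense on $\R^2$, and then to plug it into the perturbed test-function $\phi_\varepsilon(x)=\phi(x)-p\cdot x+\varepsilon\chi(x/\varepsilon)$ exactly as in Proposition~\ref{sec:function--underline-2}.

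The main step, and the only one that departs from the earlier proofs, is the construction of $\chi$. My proposal is the same three-term minimum as in Lemma~\ref{lem:2},
\begin{equation*}
\chi(y)=\min\Bigl(w(y)-C,\;p_1 y_1+\xi(0,p_1,y)-c,\;p\cdot y\Bigr),
\end{equation*}
with constants $c,C>0$ and a radius $R_1\ge R_0$ chosen so that in $\overline{B_{R_1}(0)}$ the term $w-C$ is the smallest, and so that in $\overline\Omega$ the term $p_1 y_1+\xi(0,p_1,y)-c$ lies below $p\cdot y$. The point is that, under the present hypothesis $E\ge\max(\HoneT(0,p_1),\overline H(0,p))$, each of the three pieces is an (almost-everywhere) subsolution of $H(0,\cdot,\cdot)\le E$ in the region where it can be the minimum: $w-C$ everywhere in $\R^2$, by~\eqref{eq:40}; $p_1 y_1+\xi(0,p_1,y)-c$ in $(-\infty,0]\times\R$ because $H=H_{1,\per}$ there and $\HoneT(0,p_1)\le E$; and $p\cdot y$ in $\R^2\setminus\Omega$ because $H=\overline H$ there and $\overline H(0,p)\le E$. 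The region decomposition and verifications (i)--(iv) then go through verbatim as in Lemma~\ref{lem:2} (replacing $\overline H(0,p)$ by $E$ and $q_1$ by $p_1$), and finally convexity of $H(0,y,\cdot)$ upgrades the almost-everywhere subsolution to a viscosity subsolution via \cite[Prop.~5.1, Chapter~II]{MR1484411}.

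With $\chi$ in hand, the rest is essentially a copy of the proof of Proposition~\ref{sec:function--underline-2}. Using $\phi(x)-p\cdot x=\underline u(0)+o(|x|)=(-E-\theta)/\alpha+o(|x|)$ together with $H(0,y,D\chi)\le E$, the continuity estimate~\eqref{eq:9}, and the bound $\varepsilon\chi(y/\varepsilon)\le p\cdot y$, one shows that for $r$ and $\varepsilon$ small enough $\phi_\varepsilon$ is a viscosity subsolution of $\alpha\phi_\varepsilon+H_\varepsilon(x,D\phi_\varepsilon)\le-\theta/2$ in $B_r(0)$. The strict minimality of $0$ for $\underline u-\phi$, combined with $\varepsilon\chi(x/\varepsilon)\le p\cdot x$ and the definition of $\underline u$, yields $\phi_\varepsilon\le u_\varepsilon-k(r)/2$ on $\partial B_r(0)$ for some $k(r)>0$ and $\varepsilon$ small. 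The comparison principle for \eqref{eq:13} then propagates this inequality to $\overline{B_r(0)}$; evaluating at $x=0$ along a sequence $\varepsilon_n\to 0$ with $u_{\varepsilon_n}(0)\to\underline u(0)$ gives $\underline u(0)\ge\phi(0)+k(r)/2=\underline u(0)+k(r)/2$, the desired contradiction.

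The main obstacle, as above, is the construction of $\chi$ and in particular the identification of the overlap regions where two of the three building-block functions coexist as candidates for the minimum: one must check that whenever a particular function realizes the minimum at $y$, it is a subsolution of $H(0,y,\cdot)\le E$ at that point. The condition $E\ge\max(\HoneT(0,p_1),\overline H(0,p))$ is exactly what makes this possible; without it one would need the hierarchical constructions of Lemmas~\ref{lem:2} and \ref{lem:3}.
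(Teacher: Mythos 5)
There is a genuine gap, and it sits exactly where you locate the "main obstacle": the construction of $\chi$. Your three-term minimum uses the corrector piece with the \emph{unshifted} slope $p_1$, i.e. $y\mapsto p_1y_1+\xi(0,p_1,y)-c$, and you claim the region analysis of Lemma~\ref{lem:2} "goes through verbatim with $q_1$ replaced by $p_1$". It does not: step (iii) of Lemma~\ref{lem:2} relies crucially on the strict slope gap $q_1>p_1$ to obtain \eqref{eq:48}, i.e. to push the corrector-based piece \emph{above} the affine piece on the far right strip $D=\{y_1>0,\,|y|\ge R_1,\,|y_2|\le R_0\}$. That is indispensable because on $D$ one has $H(0,y,\cdot)=\overline H(0,\cdot)\neq H_{1,\per}(0,y,\cdot)$ in general, so the corrector piece is \emph{not} a subsolution there (the paper notes this explicitly in Lemma~\ref{lem:2}, item (iii)). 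With your choice $q_1=p_1$ the exclusion mechanism disappears: to have the corrector piece below $p\cdot y$ on $\overline\Omega$ you must take $c>\sup_{|y_2|\le R_0}\bigl(\xi(0,p_1,y)-p_2y_2\bigr)$, and since $\xi(0,p_1,\cdot)$ is $1$-periodic in $y_1$ this supremum over the left strip equals the one over the right strip; hence the corrector piece lies below $p\cdot y$ on the \emph{whole} strip $|y_2|\le R_0$, including $D$, and there is no way to guarantee $w-C$ beats it on that unbounded region. So at points of $D$ your $\chi$ may coincide with a function for which no subsolution inequality is available, and the a.e.-subsolution claim for $\chi$ fails. A secondary issue is working at the exact level $E$: if, say, $\HoneT(0,p_1)=E$ with $p_1$ the largest root of $\HoneT(0,\cdot)=E$, there is no room to shift the slope at level $E$ at all.

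The paper's proof repairs both points with an $\eta$-regularization: for $\eta>0$ one has $E+\eta>\min_{q_1}\HoneT(0,q_1)$ (by \eqref{eq:42}) and $E+\eta>\min_q\overline H(0,q)$, so one can pick $q_1>p_1$ with $\HoneT(0,q_1)=E+\eta$ and slopes $\underline q_2<p_2<\overline q_2$ with $\overline H(0,p_1e_1+\underline q_2e_2)=\overline H(0,p_1e_1+\overline q_2e_2)=E+\eta$, and then set $\chi=\min\bigl(w-C,\;q_1y_1+\xi(0,q_1,y)-c,\;\underline q\cdot y,\;\overline q\cdot y\bigr)$, which satisfies $\chi\le p\cdot y$ (since $\min(\underline q\cdot y,\overline q\cdot y)\le p\cdot y$) and is a viscosity subsolution of $H(0,y,D\chi)\le E+\eta$; the strict gap $q_1>p_1$ now expels the corrector piece from the minimum on $D$ exactly as in Lemma~\ref{lem:2}. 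Running your (otherwise correct) perturbed test-function and comparison argument — which does match the paper's — then gives $\alpha\,\underline u(0)+E+\eta\ge0$, and one concludes by letting $\eta\to0$. Your proposal is missing both the slope shift and the $\eta$-limit, and without them the key subcorrector does not exist as claimed.
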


\begin{proof}
  As above, we can always assume that $\phi(0)=\underline u(0)$ and that $\underline u -\phi$ has a strict local minimum at the origin and we set $p = D\phi(0)$.
  For all $\eta>0$, it is clear from the hypothesis that  $E+\eta>  \min_{q_1\in \R}  \HoneT (0, q_1)$ and that  $E+\eta> \min_{q\in \R^2} \overline H (0, q)$.
  
  From  the latter  inequality, there exists a unique pair $(\underline q_2, \overline q_2 )\in \R^2$ such that   $\underline q_2< p_2<\overline q_2$ and that
  $\overline H (0, p_1e_1+\underline q_2 e_2)= \overline H (0, p_1e_1+\overline q_2 e_2)=E+\eta$.
  Let us set $\overline q=p_1e_1+\overline q_2 e_2$ and  $\underline q=p_1e_1+\underline q_2 e_2$. It is straightforward to check that for all $y\in \R^2$, $\min(\overline q\cdot y, \underline q\cdot y) \le p\cdot y$.
  
  There also exists $q_1\in \R$ such that $q_1> p_1$ and $\HoneT(0,q_1)=E+\eta$.
  Hence, there exists a constant $c>0$ such that $q_1y_1+\xi(0,q_1,y)-c< \min(\underline q \cdot y, \overline q \cdot y)$ for all  $y\in \overline \Omega$.
 Because $q_1>p_1$,  it is  possible to choose $R_1>R_0$ such that
  \begin{eqnarray*}
%    \label{eq:2025ter}
    q_1y_1+\xi(0,q_1,y)-c>    \min( \underline q \cdot y, \overline q \cdot y) , \quad  \hbox{for all } y\in  D,
  \end{eqnarray*}
  where $D=   \{y\in \R^2\;: y_1>0,\; |y| \ge R_1, \; |y_2|\le R_0  \}$ and
  $R_0$ is the fixed radius introduced in paragraph  \ref{sec:setting}, see   \eqref{eq:1}.

 We may then choose $C>0$ such that $w(y)-C <  \min(q_1y_1+\xi(0,q_1,y)-c,
 \underline q \cdot y, \overline q \cdot y)  $ for all $y\in \overline {B_{R_1}(0)}$.
  
  Collecting all the information above and arguing essentially as in the proofs of Lemmas \ref{lem:2} or \ref{lem:3}, we can check that the function
 $\chi: y\mapsto \min(  w(y)-C, q_1y_1+\xi(0,q_1,y)-c,
\underline q \cdot y, \overline q \cdot y
) $ is  a viscosity subsolution of  $H(0,y,D\chi(y))\le E+\eta$ in $\R^2$ and that $\chi(y)\le p\cdot y$ for all $y\in \R^2$.
 
 Reproducing the proof of Proposition  \ref{sec:function--underline-2} with this new  choice of $\chi$ leads to the inequality
 \begin{displaymath}
  \alpha \underline u (0) + E+\eta \ge 0.
 \end{displaymath}
Letting $\eta$ tend to $0$, we obtain the desired result.
\end{proof}

We have proved that $\underline u$ satisfies (\ref{eq:17}). 
In order to prove Theorem~\ref{sec:main-result-1}, there only remains  to establish that   $\underline u=\overline u$.

\subsection{End of the proof of Theorem~\ref{sec:main-result-1}}\label{proof_of_main_th}

We have proved that $\overline u$ is a weak viscosity subsolution of the stratified problem and that  $\underline u$ is a  viscosity supersolution of the stratified problem, see Definition \ref{def:2}.  Applying the comparison principle for stratified solutions, see \cite[theorem 19.1]{MR4704058}, we infer that $\underline u=\overline u$.  We deduce that the whole family $u_\varepsilon$ converges locally uniformly to the solution of  (\ref{eq:16}) through (\ref{eq:20}).

\section{Generalizations}
\label{sec:generalization}

\subsection{ A  longitudinal defect within a  periodic background}
%  The Hamiltonian is periodic with respect to $y$ in $\R^2 \setminus \overline  \Omega$}
\label{sec:generalization1}

\subsubsection{Setting}
\label{sec:generalization1setting}
The open set $\Omega$ is still defined by \eqref{eq:1}.
The Hamiltonian $H_\varepsilon: \R^2\times \R^2 \to \R$ is of the form  \eqref{eq:2} where $A$ is compact metric space.

We make the same assumptions on $f_\varepsilon: \R^2\times A\to \R^2$ and $\ell_\varepsilon: \R^2\times A\to \R$ except that
we now suppose that
\begin{itemize}
\item there exist   functions $f_{\per}:  \R^2\times \R^2 \times A\to \R^2$ and  $\ell_{\per}:  \R^2\times \R^2 \times A\to \R$
  that are periodic with respect to their second argument with period $[0,1]^2$,
  such that
  \begin{equation}
  \label{eq:3bis}
 f( x,y, a)=  f_{\per}(x,y,a), \quad\hbox{and} \quad  \ell( x,y, a)=  \ell_{\per}(x,y,a), \quad  \hbox{if } y\not \in \Omega.
\end{equation}
Assumption \eqref{eq:3bis} replaces assumption \eqref{eq:4}.
\item   There exist functions $f_{1,\per}:  \R^2\times \R^2\times A\to \R^2$, $(x,y,a)\mapsto f_{1,\per}(x,y,a)$ and $\ell_{1,\per}:  \R^2\times \R^2\times A\to \R$, $(x,y,a)\mapsto \ell_{1,\per}(x,y,a)$ such that
  \begin{enumerate}
  \item $f$ and $\ell$ coincide respectively with $f_{1,\per}$ and  $\ell_{1,\per}$ in the set $\R^2 \times \Bigl( (-\infty,0] \times \R \Bigr) \times A$, i.e. for all $x\in \R^2$, $y\in  (-\infty,0] \times \R$ and $a\in A$, $f(x,y,a)=f_{1,\per}(x,y,a)$ and
    $\ell(x,y,a)=\ell_{1,\per}(x,y,a)$
  \item $f_{1,\per}$ and $\ell_{1,\per}$ are $1$-periodic with respect to $y_1$
  \item for all $x\in \R^2$ and $a\in A$, $f_{1,\per}(x,y,a)= f_{\per} (x,y,a)$ and
     $\ell_{1,\per}(x,y,a)= \ell_{\per} (x,y,a)$ if $|y_2|\ge R_0$.
  \end{enumerate}
% \item
%   There exists  a fonction $\ell_{\per}:  \R^2\times \R^2  \times A\to \R^2$
%   which is periodic with respect to its second argument with period $[0,1]^2$,
%   such that
%   \begin{equation}
%   \label{eq:4bis}
%  \ell( x,y, a)=  \ell_{\per}(x,y,a) \quad \hbox{if } y\not \in \Omega.
% \end{equation}
% Assumption \eqref{eq:4bis} replaces assumption \eqref{eq:5}.
% \item
%   There exists a
%   function $\ell_{1,\per}:  \R^2\times \R^2\times A\to \R$, $(x,y,a)\mapsto \ell_{1,\per}(x,y,a)$ such that
%   \begin{enumerate}
%      \item $\ell$ coincides with $\ell_{1,\per}$ in the set $\R^2 \times \Bigl( (-\infty,0] \times \R \Bigr) \times A$, i.e. for all $x\in \R^2$, $y\in  (-\infty,0] \times \R$ and $a\in A$, $\ell(x,y,a)=\ell_{1,\per}(x,y,a)$
%   \item $\ell_{1,\per}$ is $1$-periodic with respect to $y_1$
%   \item  for all $x\in \R^2$ and $a\in A$, $\ell_{1,\per}(x,y,a)=\ell_{\per} (x,y,a)$ if $|y_2|\ge R_0$. 
%   \end{enumerate}
\end{itemize}
Set
\begin{displaymath}
H_{\per}(x,y,p)= \sup_{a\in A} -p\cdot f_{\per} (x,y,a) -  \ell_{\per} (x,y,a) ,
\end{displaymath}
and
\begin{displaymath}
 H_{1,\per}(x,y,p)= \sup_{a\in A} -p\cdot f_{1,\per} (x,y,a) -  \ell_{1,\per} (x,y,a) .
\end{displaymath}

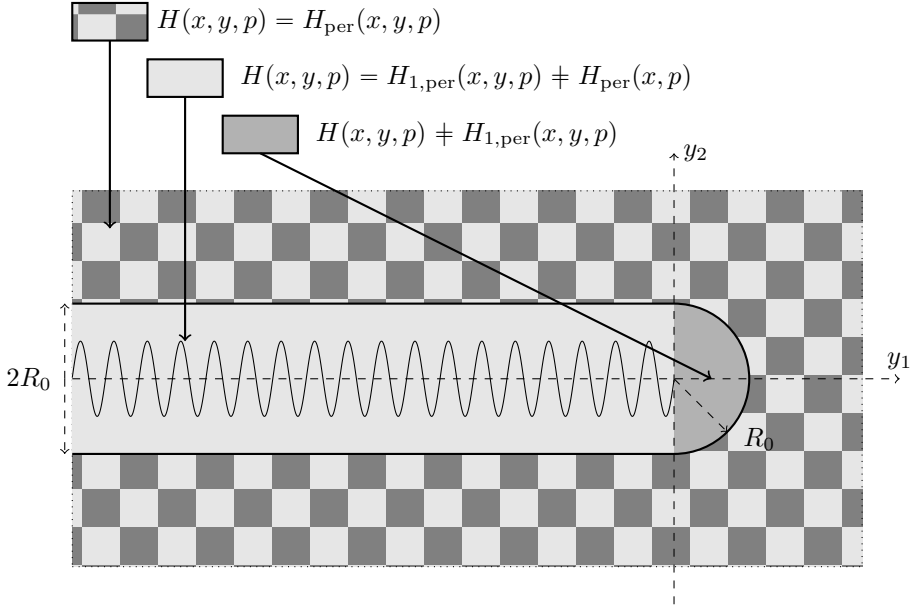
\begin{figure}[H]
  \begin{center}
    \begin{tikzpicture}

%\usetikzlibrary{patterns} 
    \filldraw [dotted, pattern=flexcheckerboard_g] (-4,-2.5) rectangle (6.5,2.5);   
 % \filldraw [loosely dashed, pattern=checkerboard, pattern color =lightgray] (-4,-2.5) rectangle (6.5,2.5);
  %\filldraw [fill=black!10, line width=0.8pt] (-4,-1) rectangle (4,1);
  %\path[fill=black!30, draw,line width=0.8pt] (4,-1)  arc (-90:90:1)  -- (4,-1);

 \filldraw [fill=black!10,black!10] (-4,-1) rectangle (4,1);
  \draw [line width=0.8pt]  (-4,-1) -- (4,-1);
  \draw [line width=0.8pt]  (-4,1) -- (4,1);
  
  \path[fill=black!30] (4,-1)  arc (-90:90:1)  -- (4,-1);
  \path[draw,line width=0.8pt] (4,-1)  arc (-90:90:1);

   \filldraw [line width=0.8pt, pattern=flexcheckerboard_g]  (-4,5) rectangle (-3,4.5);
 % \filldraw [line width=0.8pt, pattern=checkerboard, pattern color =lightgray]  (-4,5) rectangle (-3,4.5);
  \draw (-3,4.75) node[right] { {$H(x,y,p) =H_\per(x,y,p)$}};
  
   \filldraw [line width=0.8pt,fill=black!10]  (-3,4.25) rectangle (-2,3.75);
   \draw (-2,4) node[right] {{ $H(x,y,p)= H_{1,\per}(x,y,p)\not= H_\per(x,p) $}};
       \filldraw [line width=0.8pt,fill=black!30]  (-2,3.5) rectangle (-1,3);
        \draw (-1,3.25) node[right] {{ $ H(x,y,p)  \not=  H_{1,\per}(x,y,p) $}};

        \draw[->, draw opacity =1,line width=0.8pt] (-3.5,4.5) --(-3.5,2);
         \draw[->,draw opacity =1, line width=0.8pt] (-2.5,3.75) --(-2.5,0.5);
  \draw[->, draw opacity =1, line width=0.8pt] (-1.5,3) --(4.5,0);

  \draw[->, dashed] (4,-3) --(4,3);
  \draw[->,dashed] (-4,0) --(7,0);
  \draw (4,3) node[right] {{$y_2$}};
  \draw (7,0) node[above] {{$y_1$}};
  \draw[->, dashed] (-4.1,0) --(-4.1,1);
  \draw[->, dashed] (-4.1,0) --(-4.1,-1);
  \draw[->,dashed] (4,0)--( 4.7071067,-0.7071067) ;
  \draw ( 4.8,-0.8)  node[right] {{$R_0$}};
  \draw (-5,0) node[right] {{$2R_0$}};

   \draw [domain=-4:4, samples=10000] plot (\x,{0.5*sin(0.9*15.7*\x r)});
 % \draw [domain=-4:4, samples=10000] plot (\x,{0.5*sin(40*\x r)});
\end{tikzpicture}
\end{center}
\caption{The generic situation described in paragraph \ref{sec:generalization1setting}:  fixing $(x,p)\in \R^2\times \R^2$, the function $y\mapsto H(x,y,p)$ is continuous, periodic with period $[0,1]^2$ in the region filled with a checkerboard pattern,
and $1$-periodic with respect to $y_1$ in the lighter grey region. The set $\overline \Omega$ is the union of the grey regions. The sinusoidal graph is just meant to symbolize the fact that $y\mapsto H(x,y,p)$ is periodic with respect to $y_1$ in the lighter grey region.  Note also that  $H_{1,\per}$ is defined in the whole space $(\R^2)^3$ and that $H_{1,\per}(x,y,p)= H_\per(x,y,p)$ if $|y_2|\ge  R_0$.}
  \label{fig:2}
\end{figure}

% \begin{figure}[H]
%   \begin{center}
% \begin{tikzpicture}
%   \path[fill=black!6, draw,line width=0.8pt] (4,-2)  arc (-90:90:2)  -- (4,-2);
%   \path[draw,dashed](6,0)  arc (0:360:2)  ;
%   \filldraw [fill=black!2] (-4,-2) rectangle (4,2);
%   \draw (-3,2.5) node[right] {{$H(x,y,p)=  H_{1,\per}(x,y,p) =H_\per(x,y,p)$}};
%   \draw (-3,1) node[right] {{$H(x,y,p)= H_{1,\per}(x,y,p)\not= H_\per(x,p) $}};
%    \draw (5,0) node[above] {{$ \begin{array}[c]{c} H(x,y,p) \\ \not= \\ H_{1,\per}(x,y,p)\end{array} $}};
%   \draw[->, dashed] (4,-3) --(4,3);
%   \draw[->,dashed] (-4,0) --(7,0);
%   \draw (4,3) node[right] {{$y_2$}};
%   \draw (7,0) node[above] {{$y_1$}};
%   \draw[->, dashed] (-4.1,0) --(-4.1,2);
%   \draw[->, dashed] (-4.1,0) --(-4.1,-2);
%   \draw[->,dashed] (4,0)--( 5.414,-1.414) ;
%   \draw ( 5.6,-1.6)  node[below] {{$R_0$}};
%    \draw (-5,0) node[right] {{$2R_0$}};
% \end{tikzpicture}
% \end{center}
% \caption{The generic situation described in paragraph \ref{sec:generalization1setting}: the function $y\mapsto H(x,y,p)$ is continuous, periodic with period $[0,1]^2$ in the white region and $1$-periodic with respect to $y_1$ in the light grey region}\
%   \label{fig:2b}
% \end{figure}

\begin{remark}
  % Note that for our homogenization problem, it is in fact not necessary to define  $H_{1,\per} (x,y,p)$ for $x\notin (-\infty, 0]\times \{0}$.
  Examples fulfilling the assumptions above can be constructed by suitably choosing $f$ and $\ell$  in the additive form 
  $f(x,y,a)=f_0(x,y,a) + f_1(y, a)$, $\ell(x,y,a)=\ell_0 (y,a) + \ell_1(y, a)$, where $f_0: \R^2\times \R^2\times A\to \R^2$, $\ell_0 : \R^2\times \R^2\times A\to \R$,
  $f_1:  \R^2\times A\to \R^2$ and $\ell_1: \R^2\times A\to \R$ are smooth,
  $f_0$ and $\ell_0$ are periodic with respect to their second argument with period $[0,1]^2$,
  $f_1$ and $\ell_1$  vanish outside $\overline \Omega \times A$
  and coincide on $\Bigl( (-\infty,0]\times \R\Bigr) \times A$ with functions that are $1$-periodic w.r.t. $y_1$.
\end{remark}

Homogenization of the periodic Hamilton-Jacobi equation  is well understood since  \cite{LPV}.
In the periodic case, the homogenized equation  is 
\begin{equation}
  \label{eq:63}
\alpha \, u+\overline H (x,Du)=0 \quad \hbox{ in } \R^2,
\end{equation}
 where the effective Hamiltonian is characterized as follows:
 for any $p\in \R^2$, $\overline H (x,p)$ is the unique real number such that there exists a periodic corrector $\chiperp(x,\cdot)$, i.e. a viscosity solution
 $ \chiperp(x,\cdot) \in C(  \T^2)$  ($\T^2 =\R^2 /\Z^2$  denotes the torus of $\R^2$)
of 
\begin{equation}
  \label{eq:64}
  H_{\per}(x,y, p+ D_y\chiperp)=\overline H(x,p) \quad \hbox{in }\T^2.
\end{equation}
In general,  the latter periodic corrector is not unique, even up to the addition of a scalar constant.
It is well known that, under the assumptions made above, $\overline H$ inherits the properties of the periodic Hamiltonian $H_{\per}$. For any $x\in \R^2$, the function $p\mapsto \overline H(x,p)$ is convex. There exist positive constants $\overline L, \overline C, \overline c, \overline m$ and a modulus of continuity $\overline \omega$ such that,
for any $x,\tilde x \in \R^2$     $p, \tilde p\in \R^2$,
 \begin{eqnarray*}
\mid \overline H(x,p)-\overline H(x,\tilde p) \mid \le \overline L |p-\tilde p|,
\\
   \mid  \overline H(x,p)- \overline H(\tilde x,p) \mid \le \overline C|p| |x-\tilde x|+
  \overline  \omega(|x-\tilde x |),
\\
 \frac 1 {\overline c} |p|-\overline m\le \overline H(x,p)\le \overline c |p|+\overline m.
\end{eqnarray*}

\subsubsection{Effective behaviour as $\varepsilon\to 0$}
 \label{sec:generalization1main_th}
A similar strategy  as in  Section \ref{sec:proof} leads to the following theorem:
\begin{theorem}
  \label{sec:main-result-2}
We consider the solution $u_\varepsilon$ of (\ref{eq:8}).
If the  assumptions made in Subsection \ref{sec:generalization1setting} are satisfied, then,
 then,  as $\varepsilon\to 0$, the family $u_\varepsilon$ converges locally uniformly to
  a bounded and Lipschitz continuous function $u$ defined on $\R^2$, which is
  the unique solution to the  stratified problem \eqref{eq:16},  \eqref{eq:17},  \eqref{eq:18},  \eqref{eq:19},  \eqref{eq:20}
  associated to the admissible flat stratification of $\R^2$ defined in \eqref{eq:15},
  where
  \begin{itemize}
  \item the effective Hamiltonian $\overline H$ is obtained by homogenization of  the periodic problem  \eqref{eq:63} involving $H_{\per}$, see  \eqref{eq:64}
  \item  $\HoneT: \overline{\cM_1}\times \R\ \to \R$ is the  Hamiltonian defined in Section \ref{sec:proof_2} with the new version of $H_{1,\per}$
  \item  $E$ is the  effective Dirichlet datum  defined in Section~\ref{sec:ergod-const-glob}
  \end{itemize}
\end{theorem}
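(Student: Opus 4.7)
The plan is to follow verbatim the five-step scheme carried out in Section \ref{sec:proof} for Theorem~\ref{sec:main-result-1}, replacing at each step the constant background $\overline H(x,p)$ with its periodic-homogenization counterpart obtained from $H_{\per}$. As before, let $\overline u$ and $\underline u$ denote the upper and lower half-relaxed limits of $u_\varepsilon$; the uniform coercivity of $H$ and the $L^\infty$ bound $\alpha \|u_\varepsilon\|_\infty \le M_\ell$ give a Lipschitz bound independent of $\varepsilon$, so both half-limits are bounded and Lipschitz on $\R^2$. The viscosity inequalities for $\overline u, \underline u$ in $\cM_2$ are obtained by Evans' method of perturbed test-functions, using the periodic corrector $\chiperp(x,\cdot)$ from \eqref{eq:64}; this is the classical Lions--Papanicolaou--Varadhan argument and it yields \eqref{eq:16} in the sense of viscosity on $\cM_2$.

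The construction of the tangential Hamiltonian $\HoneT(x,p_1)$ via the truncated cell problem \eqref{trunc-cellp1}--\eqref{trunc-cellp2} on $Y_\rho = (\R/\Z)\times(-\rho,\rho)$ carries over unchanged, since the periodicity of $H_{1,\per}$ in $y_1$ is still available; the only adjustment concerns the far-field behaviour at $|y_2| > R_0$, where $H_{1,\per}$ now coincides with $H_{\per}$ rather than with $\overline H$. Rescaling as in Lemma~\ref{lem:rescaling_omega}, the limiting profile $\Xi(x,p_1,\cdot)$ satisfies $\overline H(x, p_1 e_1 + \partial_{y_2}\Xi \, e_2) = \HoneT(x,p_1)$ away from $y_2 = 0$, where $\overline H$ is now the periodically homogenized effective Hamiltonian; this is because the two-scale expansion $\xi_\rho(x,p_1,y/\varepsilon) + \chiperp(x,(y/\varepsilon)/\varepsilon)$-type argument reduces the fast oscillations in the $|y_2| > R_0$ strip to $\overline H$. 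The propositions on slopes and correctors in paragraph~\ref{correctors_and_cell_pb}, as well as the viscosity inequalities of Theorem~\ref{main_theorem_secM1} and Corollary~\ref{corollary:2}, then follow by the same perturbed test-function argument after this upgrade. The construction of $E$ and the global corrector $w$ near the origin in Subsection~\ref{sec:ergod-const-glob} is unchanged: the truncated state-constrained problems on $B_R(0)$, the monotonicity \eqref{eq:38} and the diagonal extraction producing $w$ solving $H(0,y,Dw)=E$ in $\R^2$ depend only on coercivity and state-constraint comparison, not on the structure outside $\Omega$. The rescaled limit $W$ of $\varepsilon w(\cdot/\varepsilon)$ is now a viscosity solution of $\overline H(0,DW)=E$ in $\cM_2$, via periodic homogenization rather than a trivial rescaling, so \eqref{eq:41} and \eqref{eq:42} persist. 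The upper bound $\alpha \overline u(0) \le -E$ is then obtained exactly as in Proposition~\ref{sec:ergod-const-glob-1}, using $w^R$ as a perturbed test-function and state-constraint comparison in $B_{\varepsilon R}(0)$.

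The main obstacle is the adaptation of Lemmas~\ref{lem:2} and~\ref{lem:3} to the periodic background. In the setting of Section~\ref{sec:proof} the canonical subsolution of $H(0,y,D\chi) \le \overline H(0,p)$ that grows like $p\cdot y$ at infinity is simply the linear function $y\mapsto p\cdot y$, because $H(0,y,\cdot)=\overline H(0,\cdot)$ outside $\Omega$. In the present periodic case, this linear function must be replaced by $y\mapsto p\cdot y + \chiperp(0,p,y)$, where $\chiperp(0,p,\cdot)$ is a periodic corrector at momentum $p$ solving \eqref{eq:64} at $x=0$; since $\chiperp$ is bounded, this modified ansatz still satisfies the desired sublinear-growth upper bound by $p\cdot y + O(1)$. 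With this substitution, the glueing argument of Lemmas~\ref{lem:2}--\ref{lem:3} (and the analogous construction underlying Proposition~\ref{sec:function--underline-4}) goes through: one takes $\chi$ as the minimum of $w(y)-C$ near the origin, $q_1 y_1 + \xi(0,q_1,y) - c$ in a neighborhood of $\cM_1$, and $p\cdot y + \chiperp(0,p,y) - c'$ in the complementary region far from $\overline{\cM_1}$, with constants $c,c',C$ chosen large enough that each piece dominates where it is not a subsolution; the verification that each contributing function is a viscosity subsolution of the relevant equation in its region, and the use of \cite[Prop.~5.1, Chap.~II]{MR1484411} via convexity to upgrade an almost-everywhere subsolution to a viscosity subsolution, are identical to what is done in the proofs of Lemmas~\ref{lem:2}--\ref{lem:3}.

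Once the three subcorrector lemmas are in place, Propositions~\ref{sec:function--underline-2}--\ref{sec:function--underline-4} applied with the periodic ansatz yield the three subsolution inequalities \eqref{eq:19} for $\underline u$ at the origin, and Corollary~\ref{corollary:2} supplies the inequalities on $\cM_1$. Thus $\overline u$ is a weak stratified subsolution and $\underline u$ a stratified supersolution of the effective problem in the sense of Definition~\ref{def:2}. Invoking the comparison principle for stratified solutions \cite[Theorem~19.1]{MR4704058}, applicable because $\overline H$ and $\HoneT$ inherit the required convexity, coercivity, and Lipschitz properties as recorded at the end of Subsection~\ref{sec:generalization1setting} and in Lemma~\ref{lem:regularity_of_ergodic_hamiltonian}, we conclude $\overline u = \underline u = u$ and the full family $u_\varepsilon$ converges locally uniformly to $u$, completing the proof of Theorem~\ref{sec:main-result-2}.
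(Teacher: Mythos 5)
Your proposal is correct and follows essentially the same route as the paper, which proves Theorem \ref{sec:main-result-2} simply by running the five-step scheme of Section \ref{sec:proof} with $\overline H$ now coming from periodic homogenization of $H_{\per}$ and with the new $H_{1,\per}$ in the truncated cell problems. The one adaptation you make explicit --- replacing the affine comparison functions such as $y\mapsto p\cdot y$ by $y\mapsto p\cdot y+\chiperp(0,p,y)-c'$ (and likewise for the other affine pieces in Lemma \ref{lem:3} and Proposition \ref{sec:function--underline-4}) so that they remain subsolutions in the periodic background while staying below $p\cdot y$ --- is exactly the modification the paper leaves implicit, so no further comment is needed.
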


\begin{remark}
 The only difference with Theorem  \ref{sec:main-result-1} is the definitions of the effective Hamiltonians  $\overline H$ and $\HoneT$.
\end{remark}
  
\begin{remark}
  Note that in the assumptions, $\cM_1$ is aligned with one of the direction  of periodicity of $H_{\per}$. This is important to generalize
the construction made in Section \ref{effective_H_M1}, in particular 
the construction of $\xi(x,p_1, \cdot)$.
The result can be further generalized to the case when $\cM_1$ is  aligned with
$m e_1 +n e_2$ with $(m, n)\in \Z^2$. Our proof does not cover as such the case of irrational quotients  $m/n$. The result might hold true but we will not proceed in that direction.
% The situation when  $\cM_1$ is  aligned with
% $a e_1 +b e_2$ when $b\not= 0$ and $a/b$ is  irrational cannot be dealt with by using our technique, and remains an unsolved issue. 
\end{remark}

\begin{remark}
  The particular case when $H_{1,\per}$ coincides with  $H_{\per} $  has already been
  studied in \cite{MR4643677}. In this case, the proof proposed above becomes much simpler because it only involves $w$ (the $\xi(0,y,p)$ are irrelevant in this case).   In particular, Lemma  \ref{lem:3} becomes irrelevant and  the counterpart of Lemma \ref{lem:2} is much simpler in this case.   Note that the proof proposed in the present work is simpler compared to that in\cite{MR4643677}, because the present study of $\underline u$ only involves 
   subcorrectors instead of correctors  as in \cite{MR4643677}.
\end{remark}

\subsection{A  defect located  in a tubular neighborhood of  a straight line and longitudinally periodic  away from  the origin}
\label{sec:generalization3}

Let us focus on a case in which the fast variations of the coefficients are localized in a neighborhood of the straight line $\{x_2=0\}$.

In both regions $\{x_1<-R_0\varepsilon\}$ and  $\{x_1>R_0\varepsilon\}$, the Hamiltonian coincides respectively
with  $H_{1,-,\per}$ and $H_{1,+,\per}$ that are periodic with respect to  $x_1/\varepsilon$ (with possibly different periods).

We expect that the effective problem involves the following stratification of $\R^2$: $\R^2=\cM_0\cup\cM_1\cup\cM_2$ where $\cM_0=\{0\}$ and $\cM_1= \cM_{1,+} \cup \cM_{1,-}$,  $\cM_{1,\pm}=
\{ \pm s e_1, s> 0 \}$. Note that $\cM_{1,+}$ and  $\cM_{1,-}$ are disjoint open half-lines.

\subsubsection{Setting}
\label{sec:generalization3setting}
The description that follows is illustrated on Figure \ref{fig:3}.

 Let $\Omega_\pm$   be the two subsets of $\R^2$ defined by
\begin{eqnarray}
  \label{eq:65}
  \Omega_\pm= \{ \pm s e_1 + t e_2, s\ge  R_0, t\in (-R_0,R_0)  \},
\end{eqnarray}
where $R_0$ is a positive number.  Let us also fix  $R_1> \sqrt{2} R_0$ and set
$ \Omega=  \Omega_{+}\cup\Omega_{-}  \cup  B_{R_1}(0) $. For a small positive parameter  $\varepsilon$
that will eventually vanish, set
$\Omega_{\pm,\varepsilon}=\varepsilon \Omega_\pm$ and $\Omega_\varepsilon =\varepsilon \Omega$.
The Hamiltonian $H_\varepsilon: \R^2\times \R^2 \to \R$ is of the form  \eqref{eq:2} where $A$ is compact metric space.

We assume that the  functions  $f_\varepsilon: \R^2\times A\to \R^2$ and $\ell_\varepsilon: \R^2\times A\to \R$ have the form  \eqref{eq:3} where $f: \R^2\times\R^2\times A \to \R^2$ and $\ell: \R^2\times\R^2\times A \to \R^2$ are bounded and continuous.
The function $f$  Lipschitz continuous with respect to its  first two variables uniformly with respect to its third variable, i.e. it satisfies  \eqref{eq:flip}.
The function $\ell$  is uniformly  continuous  with respect to its  first two variables, i.e. is  satisfies  \eqref{eq:l_UC}.
  We also suppose that there exists some radius $r_f>0$ such that for any $x\in \R^2$, $y\in \R^2$, $\{f(x,y,a),a\in A\}$ contains the ball $B_{r_f}(0)$, which implies that the trajectories are locally strongly controllable.
  
  Define  $M_f$ and $M_\ell$ as in \eqref{eq:MfMl}.
We also suppose
\begin{enumerate}
\item
  there exist functions $\bar{f}:  \R^2\times A\to \R^2$ and $\bar{\ell}:  \R^2\times A\to \R$ such that  \eqref{eq:4} holds.
%   \begin{equation}
%   \label{eq:66}
%  f( x,y, a)= \bar{f}(x,a), \quad \hbox{if } y\not \in \Omega.
% \end{equation}
\item
  There exist two functions $f_{-,\per}$ and $f_{+,\per}$ from  $\R^2\times \R^2\times A$ to $\R^2$ and  two functions $\ell_{\pm,\per}:  \R^2\times \R^2\times A\to \R$, $(x,y,a)\mapsto \ell_{\pm,\per}(x,y,a)$ 
  such that
  \begin{enumerate}
  \item $f$ and $\ell$ coincide respectively  with $f_{\pm,\per}$ and $\ell_{\pm,\per}$ in the set $  \R^2 \times   \overline{\Omega_\pm}  \times A$
    i.e. for all $x\in \R^2$, $y\in \overline{\Omega_\pm}$, $a\in A$, $f(x,y,a)=f_{\pm,\per}(x,y,a)$ and $\ell(x,y,a)=\ell_{\pm,\per}(x,y,a)$
  \item $f_{\pm,\per}$ and $\ell_{\pm,\per}$ are periodic with respect to $ y_1$
    ( $f_{+,\per}$ and $\ell_{+,\per}$ have the same period, and  $f_{-,\per}$ and $\ell_{-,\per}$ have the same period)
  \item for all $x\in \R^2$ and $a\in A$, $f_{\pm,\per}(x,y,a)=\bar f (x,a)$ and
     $\ell_{\pm,\per}(x,y,a)=\bar \ell (x,a)$
    if $| y_2|\ge  R_0$.
  \end{enumerate}
\end{enumerate}

% We assume that the function $\ell_\varepsilon: \R^2 \times A\to \R$ has the form   \eqref{eq:5},
% where $\ell: \R^2\times\R^2\times A \to \R $ is bounded, continuous and
% that there is a modulus of continuity $\omega_\ell$ such that 
%  for any  $(x,y,\tilde x, \tilde y)\in (\R^2)^4$ and $a\in A$,
%   \begin{displaymath}
%     |\ell(x,y,a)-\ell(\tilde x,\tilde y,a)|\le    \omega_\ell(|x-\tilde x| +|y-\tilde y|).
%   \end{displaymath}
  
%  Define  $M_\ell=     \sup_{x\in \R^2, y\in \R^2, a \in    A} | \ell(x,y,a)| $.
  
% \begin{enumerate}
% \item    There exists a fonction $\bar{\ell}:  \R^2\times A\to \R$ such that \begin{equation}
%   \label{eq:67}
%  \ell( x,y, a)= \bar{\ell}(x,a) \quad \hbox{if } y\notin \Omega.
% \end{equation}
% \item
%  There exist two functions $\ell_{\pm,\per}:  \R^2\times \R^2\times A\to \R^2$, $(x,y,a)\mapsto \ell_{\pm,\per}(x,y,a)$ such that
%   \begin{enumerate}
%   \item $\ell$ coincides with $\ell_{\pm,\per}$ in the set $  \R^2 \times  \overline{\Omega_\pm}  \times A$
%     i.e. for all $x\in \R^2$, $y\in \overline{\Omega_\pm}$, $a\in A$, $\ell(x,y,a)=\ell_{\pm,\per}(x,y,a)$
%   \item $\ell_{\pm,\per}$ is periodic with respect to $y_1$ with the same period as $f_{\pm,\per}$
%   \item for all $x\in \R^2$ and $a\in A$, $\ell_{\pm,\per}(x,y,a)=\bar \ell (x,a)$ if $|y_2|\ge R_0$.
%   \end{enumerate}
% \end{enumerate}

 Define $H_{1,\pm,\per}:  \R^2\times \R^2\times \R^2\to \R$ by
  \begin{displaymath}
  H_{1,\pm,\per}(x,y,p)=\max_{a\in A}  \Bigl(-p\cdot f_{\pm,\per}(x,y,a)-\ell_{\pm,\per}(x,y,a) \Bigr).    
\end{displaymath}

\begin{figure}[ht]
  \begin{center}
    \begin{tikzpicture}[scale=0.95]
     
      \draw [dotted] (-4,-2.5) rectangle (4,2.5);

      \filldraw [fill=black!10,black!10] (-4,-1) rectangle (0,1);
      \filldraw [fill=black!10,black!10] (0,-1) rectangle (4,1);
      
      \draw [line width=0.8pt]  (-4,-1) -- (4,-1);
      \draw [line width=0.8pt]  (-4,1) -- (4,1);

      \draw [domain=-4:0, samples=10000] plot (\x,{0.5*sin(5*\x r)});
       \draw [domain=0:4, samples=10000] plot (\x,{0.5*sin(8*\x r)});

       \filldraw [line width=0.8pt,fill=black!30] (0,0) circle(1.25);

  \draw [line width=0.8pt]  (-2,6) rectangle (-1,5.5);
   \draw (-1,5.75) node[right] {{$H(x,y,p)=\overline{H}(x,p)$}};

   \filldraw [line width=0.8pt,fill=black!10]  (-1,5.25) rectangle (0,4.75);
       \draw (-0,5) node[right] {{$H(x,y,p)= H_{1,-,\per}(x,y,p)\not= \overline{H}(x,p) $}};
       \filldraw [line width=0.8pt,fill=black!30]  (0,4.5) rectangle (1,4);
       \draw (1,4.25) node[right] {{ $ H(x,y,p)  \not=  H_{1,\pm,\per}(x,y,p) $}};

        \filldraw [line width=0.8pt,fill=black!10]  (1,3.75) rectangle (2,3.25);
        \draw (2,3.5) node[right] {{$H(x,y,p)= H_{1,+,\per}(x,y,p)\not= \overline{H}(x,p) $}};

        \draw[->, draw opacity =1,line width=0.8pt] (-1.5,5.5) --(-3.5,2);
         \draw[->,draw opacity =1, line width=0.8pt] (-0.5,4.75) --(-2.5,0.5);
         \draw[->, draw opacity =1, line width=0.8pt] (0.5,4) --(0.5,0);
          \draw[->,draw opacity =1, line width=0.8pt] (1.5,3.25) --(2.5,0.5);

  \draw[->, dashed] (0,-3) --(0,3);
  \draw[->,dashed] (-4,0) --(5,0);
  \draw (0,3) node[right] {{$y_2$}};
  \draw (5,0) node[above] {{$y_1$}};
  \draw[->, dashed] (-4.1,0) --(-4.1,1);
  \draw[->, dashed] (-4.1,0) --(-4.1,-1);
  \draw[->,dashed] (0,0)--( 0.88388,-0.88388) ;
  \draw ( 0.9,-0.9)  node[right] {{$R_1$}};
  \draw (-5,0) node[right] {{$2R_0$}};

\end{tikzpicture}
\end{center}
\caption{The generic situation described in paragraph \ref{sec:generalization3setting}
: fixing $(x,p)\in \R^2\times \R^2$, the function $y\mapsto H(x,y,p)$ is continuous, constant in the white region and periodic with respect to $y_1$ in the two  regions in light grey. The sinusoidal graphs are just meant to symbolize the fact that $y\mapsto H(x,y,p)$ is periodic with respect to $y_1$ in the two regions in light grey.  Note also that $H_{1,\pm,\per}(x,y,p)= \overline H(x,p)$ if $|y_2|\ge  R_0$.}
  \label{fig:3}
\end{figure}
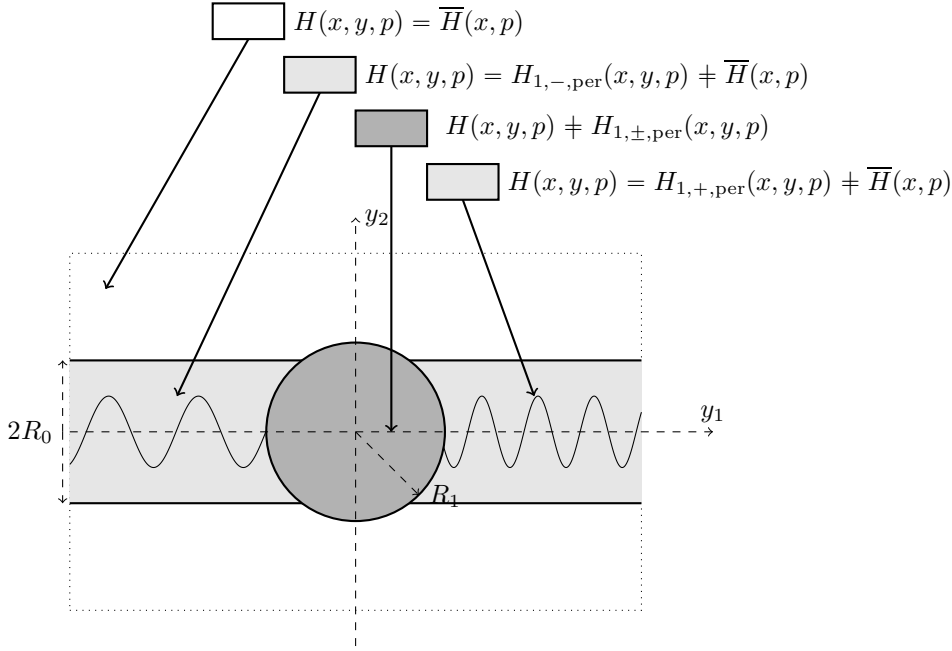

\subsubsection{Effective behaviour as $\varepsilon\to 0$}
\label{sec:generalization1main_th3}

 Let  $H_{1,\pm}: \overline \cM_{1,\pm}\times \R\to \R$ be the effective Hamiltonian obtained by reproducing the
 analysis in Section \ref{sec:proof_2} and replacing $\cM_1$ in  Section \ref{sec:proof_2}  by $\cM_{1,\pm}$, and $H_{1,\per}$ by $ H_{1,\pm,\per}$.

\begin{theorem}
  \label{sec:main-result-3}
  We consider the solution $u_\varepsilon$ of (\ref{eq:8}).
  If the  assumptions made in paragraph \ref{sec:generalization3setting} are satisfied,  then,
  as $\varepsilon\to 0$, the family $u_\varepsilon$ converges locally uniformly to
  a bounded and Lipschitz function $u$ defined on $\R^2$, which is
  the unique solution to the following stratified problem
  associated to the admissible flat stratification  $\R^2=\cM_0\cup\cM_1\cup\cM_2$ :
  \begin{enumerate}
  \item $u$ is a viscosity solution of
    \begin{equation}
      \label{eq:68}
      \alpha \, u+\overline H(\cdot,Du)= 0\quad  \hbox{in }\cM_2.
    \end{equation}
  \item
    \begin{enumerate}
    \item If  $\phi\in C^1(\R^2)$ is such that $u-\phi$ has a local minimum at $x\in \cM_{1,+}$,  then
      \begin{equation}\label{eq:18816+}
        \alpha \, u(x)+\max\left( \HoneplusT(x, \partial_1\phi(x)),  \overline H(x,D\phi(x))\right)\ge 0,
      \end{equation}
 and  if  $\phi\in C^1(\R^2)$ is such that $u-\phi$ has a local minimum at  $x\in \cM_{1,-}$,  then
       \begin{equation}\label{eq:18816-}
        \alpha \, u(x)+\max\left( \HoneminusT(x, \partial_1\phi(x)),  \overline H(x,D\phi(x))\right)\ge 0,
      \end{equation}
      where $H_{1,\pm}: \overline{\cM_{1,\pm}}\times \R\ \to \R$ are the  Hamiltonians defined immediately above.
    \item  If  $\phi\in C^1(\cM_{1,+})$ is such that $u-\phi$ has a local maximum at $x\in \cM_{1,+}$, then
      \begin{equation} \label{eq:18817+}
        \alpha \, u(x)+ \HoneplusT(x,  \phi'(x)) \le 0,
      \end{equation}
      and if
      $\phi\in C^1(\cM_{1,-})$ is such that $u-\phi$ has a local maximum at $x\in \cM_{1,-}$, then
      \begin{equation} \label{eq:18817-}
        \alpha \, u(x)+ \HoneminusT(x,  \phi'(x)) \le 0.
      \end{equation}
    \end{enumerate}
  \item
    \begin{enumerate}
    \item
      If  $\phi\in C^1(\R^2)$ is such that $u-\phi$ has a local minimum at $0$, then
      \begin{equation}\label{eq:69}
        \alpha \, u(0)+\max\left( E, \HoneplusT(0,\partial_1 \phi(0)), \HoneminusT(0,\partial_1\phi(0)),  \overline H(0,D\phi(0))\right)\ge 0,
      \end{equation}
      where $E$ is the  effective Dirichlet datum  defined in Section~\ref{sec:ergod-const-glob}.
    \item
      \begin{equation}\label{eq:70}
        \alpha \, u(0)+E \le  0.
      \end{equation}
    \end{enumerate}
  \end{enumerate}
\end{theorem}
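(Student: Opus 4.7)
The plan is to follow the five-step architecture of the proof of Theorem \ref{sec:main-result-1} essentially line by line, with the extra bookkeeping needed because the defect region $\Omega$ now contains two disjoint ``periodic wings'' $\Omega_+$ and $\Omega_-$ that asymptote to $\overline{\cM_{1,+}}$ and $\overline{\cM_{1,-}}$ respectively, joined through a compact core near the origin. First, I would apply the entire content of Subsection \ref{sec:proof_2} separately to each half-line: using $H_{1,+,\per}$ (resp.\ $H_{1,-,\per}$) I construct, via truncated cell problems in $Y_\rho$ as in Lemma \ref{lem:existence_solution_truncated_cell_pb}, the effective tangential Hamiltonians $\HoneplusT$ and $\HoneminusT$, the associated global correctors $\xi_\pm(x,p_1,\cdot)$ solving \eqref{cellpE}, and the rescaled limits $\Xi_\pm$ from Lemma \ref{lem:rescaling_omega}. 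Because points of $\cM_{1,+}$ and $\cM_{1,-}$ are far from each other (and the relevant ``wing'' of $\Omega$ is locally one-sided there), the perturbed test function argument of Theorem \ref{main_theorem_secM1} and Corollary \ref{corollary:2} directly yields the sub/supersolution conditions \eqref{eq:18816+}--\eqref{eq:18817-} on each half-line.

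Second, the ergodic constant $E$ and the associated global corrector $w$ at the origin are built exactly as in Subsection \ref{sec:ergod-const-glob}: one solves the state-constrained problems on $\overline{B_R(0)}$ for the frozen Hamiltonian $H(0,\cdot,\cdot)$, extracts $E^R$, passes to the limit $R\to\infty$ to get $E$ and $w$ with $w(0)=0$. The monotonicity in $R$ and the a priori Lipschitz bounds transfer verbatim since the argument only uses the controllability, boundedness and continuity of $H(0,\cdot,\cdot)$, which are insensitive to the splitting of the defect. Arguing as in the second half of Subsection \ref{sec:ergod-const-glob}, by zooming out one sees $E \ge \min_p \overline H(0,p)$ and moreover $E\ge \min_{p_1}\HoneplusT(0,p_1)$ and $E\ge \min_{p_1}\HoneminusT(0,p_1)$, since the rescaled limit $W$ of $w(\cdot/\varepsilon)$ is a viscosity subsolution of $\HonepmT(0,\partial_{x_1}W)=E$ on each of the two open half-lines.

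Third, the upper bound \eqref{eq:70} on $\overline u(0)$ is a copy of Proposition \ref{sec:ergod-const-glob-1}: one builds the perturbed supersolution $\phi^{\varepsilon,R}(x)=\overline u(0)+\varepsilon w^R(x/\varepsilon)$, uses \eqref{eq:9} to freeze $x$ at $0$ up to a small error, and invokes the state-constraint comparison principle on $B_{\varepsilon R}(0)$; nothing in this step is sensitive to whether the defect is one-sided or two-sided. The lower bound \eqref{eq:69} is the analogue of Propositions \ref{sec:function--underline-2}--\ref{sec:function--underline-4}, and this is where I expect the main technical work. Given $\phi\in C^1(\R^2)$ with $p=D\phi(0)$ such that $0$ is a strict local minimum of $\underline u-\phi$, I argue by contradiction assuming
\begin{equation*}
\alpha\underline u(0)+\max\bigl(E,\HoneplusT(0,p_1),\HoneminusT(0,p_1),\overline H(0,p)\bigr)<0,
\end{equation*}
and I must produce a Lipschitz $\chi:\R^2\to\R$ satisfying $\chi(y)\le p\cdot y$ on $\R^2$ and $H(0,y,D\chi(y))\le \max(E,\HoneplusT(0,p_1),\HoneminusT(0,p_1),\overline H(0,p))$ in the viscosity sense, so that $\phi_\varepsilon(x)=\phi(x)-p\cdot x+\varepsilon\chi(x/\varepsilon)$ serves as a perturbed subsolution and contradicts $\underline u(0)$ by standard comparison on a small ball.

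The hard part is the construction of $\chi$, which must now simultaneously respect the two distinct periodic structures in $\Omega_+$ and $\Omega_-$. Following the template of Lemmas \ref{lem:2} and \ref{lem:3}, my candidate is a minimum of four (or five) globally Lipschitz subsolutions, pieced together using the convexity-stability fact of \cite[Prop.~5.1, Chapter II]{MR1484411}: (i) the global corrector $w-C$ from Subsection \ref{sec:ergod-const-glob}, which handles the core $\overline{B_{R_1}(0)}$; (ii) affine functions $\underline q\cdot y$ and $\overline q\cdot y$ chosen so that $\overline H(0,\underline q)=\overline H(0,\overline q)=\max(E,\HoneplusT,\HoneminusT,\overline H(0,p))$ and $\min(\underline q\cdot y,\overline q\cdot y)\le p\cdot y$, used far from the two wings; (iii) two ``wing'' subcorrectors of the form $q_{1,\pm}y_1+\xi_\pm(0,q_{1,\pm},y)-c_\pm$ selected in $\Omega_\pm$ to match the appropriate level of $\HonepmT(0,\cdot)$. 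Once the constants $c_\pm$, $C$ and the transition radius $R_1$ are adjusted exactly as in paragraph (i)--(iv) of the proof of Lemma \ref{lem:2}, the resulting $\chi$ is almost everywhere a subsolution in each of the overlapping regions $\overline{B_{R_1}(0)}$, $\overline{\Omega_+}$, $\overline{\Omega_-}$ and the complement, hence globally; the convexity of $H(0,y,\cdot)$ promotes this to a viscosity subsolution. The case analysis (which of $E$, $\HoneplusT$, $\HoneminusT$, $\overline H(0,p)$ realizes the max, and whether equality between two of them occurs) is handled by selecting which pieces enter the minimum, exactly mirroring the dichotomies of Lemmas \ref{lem:2} and \ref{lem:3} and Proposition \ref{sec:function--underline-4}. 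Finally, $\underline u=\overline u$ follows from the comparison principle for stratified solutions \cite[Theorem 19.1]{MR4704058} applied to the stratification $\R^2=\cM_0\cup\cM_1\cup\cM_2$, which closes the proof.
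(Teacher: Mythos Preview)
Your proposal is correct and follows essentially the same architecture as the paper: the paper explicitly states that ``the strategy of proof is exactly the same as in Section~\ref{sec:proof}'' and then concentrates, as you do, on the two-wing counterparts of Lemmas~\ref{lem:2}--\ref{lem:3} and Propositions~\ref{sec:function--underline-3}--\ref{sec:function--underline-4}, building $\chi$ as a minimum of $w-C$, affine pieces, and two wing subcorrectors $q_{1,\pm}y_1+\xi_\pm(0,q_{1,\pm},y)-c$. The only point where the paper is more explicit than your sketch is the case analysis when $\HoneplusT(0,p_1)=\HoneminusT(0,p_1)$ (their Lemma~\ref{lem:183}): here the construction splits further according to the monotonicity of $\HonepmT(0,\cdot)$ near $p_1$, and in one borderline subcase only an $\eta$-approximate subcorrector is available, so the supersolution inequality at $0$ is obtained after letting $\eta\to 0$.
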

\paragraph{Elements of proof}
The strategy of proof is exactly the same as in Section \ref{sec:proof} above. The main changes concern the technical lemmas contained in Subsection \ref{sec:function--underline} and devoted to the construction of subcorrectors. The following lemma is the counterpart of Lemma   \ref{lem:2}:
\begin{lemma} \label{lem:182}
  Consider $p\in \R^2$. If $    \max(E,\HoneplusT (0,p_1), \HoneminusT (0,p_1))<\overline H (0,p)$, then there exists
  a Lipschitz function $\chi:\R^2\to \R$ such that
  \begin{eqnarray}
    \label{eq:71}
\chi(y)\le p\cdot y,\quad \hbox{for all } y\in \R^2,
    \\
    \label{eq:72}   H(0,y,D\chi(y))\le \overline H(0,p), \quad \hbox{in }\R^2,
  \end{eqnarray}
  where \eqref{eq:72} is understood in the viscosity sense.
\end{lemma}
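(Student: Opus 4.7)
The proof would run parallel to that of Lemma \ref{lem:2}, the only genuine novelty being that the localized periodic structure now splits into two independent pieces, $H_{1,+,\per}$ in $\Omega_+$ and $H_{1,-,\per}$ in $\Omega_-$; consequently the subcorrector $\chi$ must incorporate \emph{two} distinct tangential correctors rather than one. I would take $\chi$ as the pointwise minimum of four Lipschitz ingredients: the global corrector $w-C$ from Section \ref{sec:ergod-const-glob}, two periodic tangential correctors $q_1^\pm y_1+\xi_\pm(0,q_1^\pm,\cdot)-c_\pm$ attached respectively to $H_{1,\pm,\per}$, and the affine function $p\cdot y$.

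The slopes $q_1^\pm$ are chosen via convexity. By the hypothesis $\overline H(0,p)>\HoneplusT(0,p_1)$ together with the convexity and coercivity of $q_1\mapsto\HoneplusT(0,q_1)$ (Lemma \ref{lem:regularity_of_ergodic_hamiltonian}), the level set $\{q_1:\HoneplusT(0,q_1)=\overline H(0,p)\}$ consists of two points straddling $p_1$, and I would pick $q_1^+<p_1$ from it; symmetrically I would pick $q_1^->p_1$ with $\HoneminusT(0,q_1^-)=\overline H(0,p)$. The correctors $\xi_\pm(0,q_1^\pm,\cdot)$ come from Section \ref{correctors_and_cell_pb} applied to $H_{1,\pm,\per}$: each is Lipschitz on $Y$, periodic in $y_1$, and satisfies the cell equation with effective value $\overline H(0,p)$. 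I would then fix $c_\pm>0$ large enough that
\begin{equation*}
q_1^\pm y_1+\xi_\pm(0,q_1^\pm,y)-c_\pm \le p\cdot y \quad \text{on } \overline{\Omega_\pm},
\end{equation*}
which is possible because the linear term $(q_1^\pm-p_1)y_1$ is bounded above on $\Omega_\pm$ by $-|q_1^\pm-p_1|R_0<0$ (using $\pm y_1\ge R_0$) while $\xi_\pm$ and $y_2$ remain bounded there. By the same mechanism, keeping $|c_+-c_-|$ bounded also yields $q_1^\pm y_1+\xi_\pm-c_\pm \le q_1^\mp y_1+\xi_\mp-c_\mp$ on $\overline{\Omega_\pm}$. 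Finally, $C$ is taken so large that $w-C$ is the minimum of the four candidates on $\overline{B_{R_1}(0)}$, which is possible since all four are bounded there. I then set
\begin{equation*}
\chi(y):=\min\bigl(w(y)-C,\,q_1^+y_1+\xi_+(0,q_1^+,y)-c_+,\,q_1^-y_1+\xi_-(0,q_1^-,y)-c_-,\,p\cdot y\bigr),
\end{equation*}
so that \eqref{eq:71} is immediate.

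For \eqref{eq:72}, I would verify the subsolution property almost-everywhere and promote it to viscosity sense via convexity of $H(0,y,\cdot)$, as in \cite[Prop.~5.1, Chapter II]{MR1484411}. Split $\R^2$ into: (i) $\overline{B_{R_1}(0)}$, where $\chi=w-C$ and $H(0,\cdot,D(w-C))=E<\overline H(0,p)$; (ii) $\overline{\Omega_+}\setminus B_{R_1}(0)$, where $H(0,y,\cdot)=H_{1,+,\per}(0,y,\cdot)$ and, by the choice of $c_\pm$, the minimum is realized among $\{w-C,\,q_1^+y_1+\xi_+-c_+\}$, both of which are subsolutions of $H_{1,+,\per}(0,y,\cdot)\le\overline H(0,p)$; (iii) the symmetric region $\overline{\Omega_-}\setminus B_{R_1}(0)$; and (iv) $\R^2\setminus\overline\Omega$, where $|y_2|\ge R_0$ forces $H_{1,\pm,\per}(0,y,\cdot)=\overline H(0,\cdot)$, so all four ingredients are subsolutions of $\overline H(0,\cdot)\le \overline H(0,p)$. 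In each region, almost everywhere the minimum of subsolutions is itself a subsolution by the appendix lemma \ref{lemma_appendix}.

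The main obstacle is the bookkeeping in regions (ii) and (iii): neither $p\cdot y$ nor the \emph{opposite} periodic corrector is a subsolution of $H_{1,\pm,\per}(0,y,\cdot)\le\overline H(0,p)$ inside $\Omega_\pm$, so $q_1^\pm y_1+\xi_\pm-c_\pm$ must be driven strictly below both of them on $\Omega_\pm$. The key point is that the resulting inequalities reduce to \emph{bounded} constraints on $c_\pm$: this is exactly what the sign choices $q_1^+<p_1<q_1^-$, combined with the fact that $|y_1|\ge R_0$ on $\Omega_\pm$, make possible. Once these choices are implemented, the remainder of the argument is entirely analogous to Lemmas \ref{lem:2} and \ref{lem:3}.
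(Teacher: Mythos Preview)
Your overall strategy---taking $\chi$ as the pointwise minimum of $w-C$, the two tangential correctors $q_1^\pm y_1+\xi_\pm(0,q_1^\pm,y)-c_\pm$, and $p\cdot y$, with the sign choices $q_1^+<p_1<q_1^-$---is exactly the paper's. The gap is in the cross inequality. You assert that, by adjusting $c_\pm$, one can force
\[
q_1^+ y_1+\xi_+-c_+ \le q_1^- y_1+\xi_--c_- \ \text{on }\overline{\Omega_+},
\qquad
q_1^- y_1+\xi_--c_- \le q_1^+ y_1+\xi_+-c_+ \ \text{on }\overline{\Omega_-}.
\]
But these two constraints pin $c_+-c_-$ from below and from above, and are simultaneously satisfiable only if
\[
\sup_{\overline{\Omega_+}}\bigl[(q_1^+-q_1^-)y_1+\xi_+-\xi_-\bigr]
\;\le\;
\inf_{\overline{\Omega_-}}\bigl[(q_1^+-q_1^-)y_1+\xi_+-\xi_-\bigr].
\]
On $\overline{\Omega_\pm}$ you only know $|y_1|\ge R_0$, a \emph{fixed} number; the favourable linear contribution is therefore at best $(q_1^+-q_1^-)R_0$, which need not dominate the bounded but a priori uncontrolled oscillation of $\xi_+-\xi_-$ (two unrelated correctors, possibly with different periods). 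So the inequality can fail, and then in your region (ii) the ``wrong'' corrector $q_1^- y_1+\xi_--c_-$ may realise the minimum; since it solves the cell problem for $H_{1,-,\per}$, not $H_{1,+,\per}$, the subsolution property in $\overline{\Omega_+}$ is lost.

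The paper's remedy is to introduce an \emph{adjustable} radius $R_2>R_1$ and to impose the cross inequality only on $D_\pm=\{y:\pm y_1>0,\ |y|\ge R_2,\ |y_2|\le R_0\}$. There $|y_1|\ge\sqrt{R_2^2-R_0^2}$, so the linear term $(q_1^\pm-q_1^\mp)y_1$ can be made as negative as desired by taking $R_2$ large; the cross inequality then follows with a single $c$ on both sides. The enlarged ball $\overline{B_{R_2}(0)}$ (where $\chi=w-C$ after increasing $C$) absorbs the troublesome inner portion of $\Omega_\pm$. With this one modification your four-region verification goes through verbatim.
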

\begin{proof}
  There exists a unique $q_{1,-}>p_1$ such that $\HoneminusT(0,q_{1,-})=\overline H(0,p)$ and a
  unique $q_{1,+}<p_1$ such that $\HoneplusT(0,q_{1,+})=\overline H(0,p)$.
  Because $q_{1,-}>p_1>q_{1,+}$,
  there  exists a constant $c>0$ such that $q_{1,-} y_1+\xi_-(0,q_{1,-},y)-c< p\cdot y$ for all $y\in\overline \Omega_-$ and that
  $q_{1,+} y_1+\xi_+(0,q_{1,+},y)-c< p\cdot y$ for all $y\in\overline \Omega_+$. Here $\xi_\pm$ are the counterparts of
  $\xi$ defined in  Subsection \ref{correctors_and_cell_pb} when $ H_{1,\per}$ is replaced by $H_{1,\pm,\per}$.
  
 Next, there exists $R_2>R_1$ (recall that $R_1$ is fixed in paragraph \ref{sec:generalization3setting}) such that
  \begin{displaymath}
    q_{1,+}y_1+\xi_+(0,q_1,y)  < q_{1,-}y_1+\xi_-(0,q_1,y) ,\quad \hbox{for all } y\in
D_+
\end{displaymath}
  and that
\begin{equation*}
  q_{1,-}y_1+\xi_-(0,q_1,y)  < q_{1,+}y_1+\xi_+(0,q_1,y) ,\quad \hbox{for all } y\in
  D_-,
\end{equation*}
where
\begin{eqnarray}
  \label{eq:73}
 D_+=\{y\in \R^2:\;
  y_1>0,\; |y| \ge R_2, \; |y_2|\le R_0   \},
  \\
   \label{eq:74}
  D_-=\{y\in \R^2:\;
y_1<0,\; |y| \ge R_2, \; |y_2|\le R_0   \}.
\end{eqnarray}

Note that
$\overline H (0,  q_{1,\pm} e_1+D\xi_{\pm}(0,q_{1,\pm},y) ) = \overline H(0,p)$ in $\{y: |y_2|>R_0\}$.

With $w$ defined in the same way as in Subsection \ref{sec:ergod-const-glob},
we may then choose $C>0$ such that $w(y)-C <  \min(q_{1,-}y_1+\xi_-(0,q_{1,-},y)-c, q_{1,+}y_1+\xi_+(0,q_{1,+},y)-c, p\cdot y)  $  for all $y\in \overline { B_{R_2}(0)}$.
The function $\chi: y\mapsto \min(  w(y)-C, q_{1,-}y_1+\xi_-(0,q_{1,-},y)-c, q_{1,+}y_1+\xi_+(0,q_{1,+},y)-c, p\cdot y) $ is Lipschitz continuous  as the minimum of four Lipschitz continuous functions. To prove that $\chi$ is an  almost everywhere subsolution of   \eqref{eq:72},  we split the space~${\mathbb R}^2$ into four regions:
\begin{description}
\item[(i)] for $y\in \overline{B_{R_2}(0)}$, $\chi(y)\,=w(y)-C$, and $w-C$ satisfies \eqref{eq:72} in the sense of viscosity and almost everywhere in  $\overline{B_{R_2}(0)}$.
\item[(ii)]   In $D_-$,  the set defined by  \eqref{eq:74},
  $\chi(y)= \min(q_{1,-}y_1+\xi_-(0,q_{1,-},y)-c, w(y)-C)$.
  Both  $y\mapsto q_{1,-}y_1+\xi_-(0,q_{1,-},y)-c$ and $w-C$ are subsolutions of \eqref{eq:72} in $D_-$, hence  $\chi$ is an almost everywhere  subsolution of \eqref{eq:72} in $ D_-$.
\item[(iii)] Similarly,  in $D_+$, the set defined by \eqref{eq:73}, $\chi(y)= \min(q_{1,+}y_1+\xi_+(0,q_{1,+},y)-c, w(y)-C)$.
  Both  $y\mapsto q_{1,+}y_1+\xi_+(0,q_{1,+},y)-c$ and $w-C$ are subsolutions of \eqref{eq:72} in $D_+$, hence  $\chi$ is an almost everywhere  subsolution of \eqref{eq:72} in $ D_+$.
\item[(iv)] If $y\in \R^2 \setminus (  \overline{B_{R_2}(0)} \cup  D_- \cup D_+   )$, then $\chi(y)$ may coincide with either  $w(y)-C$, $q_{1,-}y_1+\xi_-(0,q_{1,-},y)-c$, $q_{1,+}y_1+\xi_+(0,q_{1,+},y)-c$ or $p\cdot y$.
 In this region, the latter four functions
 % $y\mapsto p\cdot y$, $w-C$ and  $ y\mapsto q_1e_1+\xi(0,q_1,y)$
  are viscosity subsolutions of  \eqref{eq:72}. Hence,  $\chi$ is an almost everywhere subsolution of  \eqref{eq:72} in $\R^2 \setminus (  \overline{B_{R_2}(0)} \cup D_- \cup D_+    )$.
\end{description}
Finally, $\chi$ is almost everywhere a subsolution of  \eqref{eq:72} in $\R^2$.  Since  $q\mapsto H(0,y,q) $ is convex, we know from~\cite[Prop. 5.1, Chapter II]{MR1484411} that this Lipschitz continuous almost everywhere subsolution is also a viscosity subsolution of  \eqref{eq:72}. Clearly, $\chi$ satisfies  also \eqref{eq:71}.
\end{proof}

 The following lemma is the counterpart of Lemma   \ref{lem:3}:
\begin{lemma} \label{lem:183}
  Consider $p\in \R^2$. 
  \begin{enumerate}
  \item
    If
    \begin{equation}
      \label{eq:75}
      \max( E , \overline H (0,p)) < \max( \HoneminusT(0,p_1) ,  \HoneplusT(0,p_1)),
    \end{equation}
    or
    \begin{eqnarray}  \label{eq:76}
      E < \overline H (0,p)=  \max( \HoneminusT(0,p_1) ,  \HoneplusT(0,p_1)),\\
       \label{eq:77}
      \HoneminusT(0,p_1) \not =  \HoneplusT(0,p_1),
    \end{eqnarray}
    or 
    \begin{eqnarray}  \label{eq:78}
     E < \overline H (0,p)=  \HoneminusT(0,p_1) =  \HoneplusT(0,p_1)),\\
      \label{eq:79}
      \hbox{  near $p_1$, $\HoneminusT (0,\cdot)$ or  $-\HoneplusT(0,\cdot)$ is decreasing},
    \end{eqnarray}
 then there exists  a Lipschitz function $\chi:\R^2\to \R$ such that
    \begin{eqnarray}
      \label{eq:80}\chi(y)\le p\cdot y,\quad \hbox{for all } y\in \R^2,
      % \limsup_{|y|\to\infty} \frac {\chi(y)-p\cdot y }{|y|}\le 0,
      \\
      \label{eq:81}   H(0,y,D\chi(y))\le  \max( \HoneminusT(0,p_1) ,  \HoneplusT(0,p_1)), \quad \hbox{in }\R^2,
    \end{eqnarray}
    where \eqref{eq:81} is understood in the viscosity sense.

  \item   If  \eqref{eq:78} holds and both  $\HoneminusT(0,
    \cdot)$ and  $-\HoneplusT(0,\cdot)$
    are non decreasing %increasing
    near $p_1$,
    then for  $\eta>0$,
 there exists  a Lipschitz function $\chi:\R^2\to \R$ that satisfies  \eqref{eq:80} and  
    \begin{equation}
      \label{eq:82}   H(0,y,D\chi(y))\le  \max( \HoneminusT(0,p_1) ,  \HoneplusT(0,p_1))+\eta \quad \hbox{in }\R^2.
    \end{equation}
  % {\color{red} The proof does not hold if  $E < \overline H (0,p)=  \HoneminusT(0,p_1) =  \HoneplusT(0,p_1)$. Need to rule this case out? In fact I can  still have it work in that case  if furthermore there exists $\tilde p_1<p_1$ such that $\HoneplusT(0,\tilde p_1)< \HoneplusT(0, p_1)$ or if there exists $\hat p_1>p_1$ such that $\HoneplusT(0,\hat p_1)< \HoneminusT(0, p_1)$ but this becomes a bit technical.
  %   In this case, for $\eta>0$, it is always possible to find   $\chi$ such that 
  %    \begin{eqnarray*}
  %   \chi(y)\le p\cdot y,\quad \hbox{for all } y\in \R^2,
  %   %\limsup_{|y|\to\infty} \frac {\chi(y)-p\cdot y }{|y|}\le 0,
  %   \\
  %    H(0,y,D\chi(y))\le   \HoneminusT(0,p_1) +\eta , \quad \hbox{in }\R^2,
  %    \end{eqnarray*}
  %    and then one can always argue as in the case $E\ge \max(\overline H (0,p)  \HoneplusT(0,\hat p_1),\HoneminusT(0, p_1))$.
  % }
  \end{enumerate}
\end{lemma}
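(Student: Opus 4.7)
The plan is to mirror the construction in the proof of Lemma~\ref{lem:3} using the two tube correctors $\xi_-$ and $\xi_+$ associated respectively with $H_{1,-,\per}$ and $H_{1,+,\per}$ (as in the proof of Lemma~\ref{lem:182}), glued through a minimum. Throughout, set $M := \max(\HoneminusT(0,p_1),\HoneplusT(0,p_1))$, so that the target inequality is $H(0,y,D\chi)\le M$ in part~1 and $H(0,y,D\chi)\le M+\eta$ in part~2.

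The first and principal step is to select real numbers $q_{1,-}\ge p_1$ and $q_{1,+}\le p_1$, with at least one strict inequality in part~1 and both strict in part~2, such that $\HoneminusT(0,q_{1,-})\le M$ and $\HoneplusT(0,q_{1,+})\le M$ (or $\le M+\eta$ in part~2). This is where each sub-case of the hypothesis is used: in~\eqref{eq:75}, the strict bound $E<M$ together with the analogue of~\eqref{eq:42} for Case~3 rules out the possibility that both $\HoneminusT(0,\cdot)$ and $\HoneplusT(0,\cdot)$ attain their global minimum equal to $M$ at $p_1$, and the convexity-and-continuity properties from Lemma~\ref{lem:regularity_of_ergodic_hamiltonian} yield the required strict perturbation in at least one direction; in~\eqref{eq:76}--\eqref{eq:77}, the strictly smaller of $\HoneminusT(0,p_1)$ and $\HoneplusT(0,p_1)$ admits a strict perturbation by continuity; in~\eqref{eq:78}--\eqref{eq:79}, the local monotonicity assumption furnishes either $q_{1,-}>p_1$ with $\HoneminusT(0,q_{1,-})<M$ or $q_{1,+}<p_1$ with $\HoneplusT(0,q_{1,+})<M$; and in part~2, continuity of both tangential Hamiltonians allows both $q_{1,\pm}$ to be chosen strictly, at the cost of the margin $\eta$.

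The subcorrector is then defined as
\[
\chi(y) := \min\bigl(w(y)-C,\ q_{1,-}y_1+\xi_-(0,q_{1,-},y)-c,\ q_{1,+}y_1+\xi_+(0,q_{1,+},y)-c,\ p\cdot y\bigr),
\]
with $w$ the origin corrector constructed as in Subsection~\ref{sec:ergod-const-glob}. Each of the four arguments is a viscosity subsolution of $H(0,y,D\chi)\le M$ (resp.~$M+\eta$) in an explicit region: $w-C$ on all of $\R^2$ since $E\le M$; the half-tube functions $q_{1,\pm}y_1+\xi_\pm(0,q_{1,\pm},\cdot)-c$ on $\Omega_\pm\cup\{|y_2|>R_0\}$, since there $H$ coincides with $H_{1,\pm,\per}$ and $\HoneminusT(0,q_{1,-})\le M$, $\HoneplusT(0,q_{1,+})\le M$; and $p\cdot y$ on $\R^2\setminus\Omega$, since there $H=\overline H$ and $\overline H(0,p)\le M$. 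Successively choosing $R_2>R_1$, then $c$, then $C$ large enough, one ensures that in each piece of the natural partition (the ball $B_{R_2}(0)$, the far-tube strips $\Omega_\pm\setminus B_{R_2}(0)$, and the complement) the minimum is attained by a function that is a subsolution on that piece, exactly as in the four-region verification performed in Lemma~\ref{lem:3}. The minimum of finitely many subsolutions is an almost-everywhere subsolution, and by convexity of $H(0,y,\cdot)$ together with \cite[Prop.~5.1, Chapter~II]{MR1484411} it is also a viscosity subsolution of~\eqref{eq:81} (resp.~\eqref{eq:82}). The pointwise bound $\chi\le p\cdot y$ is immediate from the definition.

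The main obstacle is the selection step for $q_{1,\pm}$. To guarantee that the function $q_{1,+}y_1+\xi_+(0,q_{1,+},y)-c$ is the active minimum on the far part of $\Omega_+$, and symmetrically for its $-$-counterpart on $\Omega_-$, one needs $q_{1,+}<q_{1,-}$, i.e.~a strict perturbation off $p_1$ in at least one direction. The monotonicity hypothesis~\eqref{eq:79} is precisely what delivers such a strict perturbation in the fully degenerate regime~\eqref{eq:78}; when it fails no purely geometric argument at the exact level $M$ seems available, and this is circumvented in part~2 by raising the target level by $\eta$, so that continuity of $\HoneplusT(0,\cdot)$ and $\HoneminusT(0,\cdot)$ automatically restores two strict perturbations. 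Once the $q_{1,\pm}$ are secured, the rest of the argument is a direct adaptation of the proof of Lemma~\ref{lem:3}.
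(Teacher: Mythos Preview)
Your unified minimum-of-four construction is close to the paper's approach and works in most sub-cases, but there is a genuine gap in one configuration under hypothesis~\eqref{eq:75}. Suppose $\HoneminusT(0,p_1)=\HoneplusT(0,p_1)=M$, with $\HoneminusT(0,\cdot)$ \emph{increasing} and $\HoneplusT(0,\cdot)$ \emph{decreasing} near $p_1$ (this is perfectly compatible with $E<M$ and with the analogue of~\eqref{eq:42}, since the two tangential Hamiltonians are independent). Then the constraints $q_{1,-}\ge p_1$ with $\HoneminusT(0,q_{1,-})\le M$ force $q_{1,-}=p_1$, and likewise $q_{1,+}\le p_1$ with $\HoneplusT(0,q_{1,+})\le M$ force $q_{1,+}=p_1$. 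You obtain $q_{1,-}=q_{1,+}=p_1$ with no strict inequality, so on the far strip $\Omega_-\setminus B_{R_2}(0)$ the difference between $q_{1,-}y_1+\xi_--c$ and $q_{1,+}y_1+\xi_+-c$ (and also $p\cdot y$) stays bounded and there is no way to guarantee that the minimum is attained by a function that is a subsolution there. Your sentence ``convexity-and-continuity\ldots\ yield the required strict perturbation in at least one direction'' does not hold in this case.

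The paper treats precisely this configuration as its ``Third case'' and uses a genuinely different construction: it keeps both tube functions at the level $p_1$ but defines $\chi$ \emph{piecewise} according to the sign of $y_1$,
\[
\chi(y)=\begin{cases}\min\bigl(w(y)-C,\ p\cdot y,\ p_1y_1+\xi_-(0,p_1,y)-c\bigr)&\text{if }y_1\le 0,\\[2pt]\min\bigl(w(y)-C,\ p\cdot y,\ p_1y_1+\xi_+(0,p_1,y)-c\bigr)&\text{if }y_1\ge 0.\end{cases}
\]
The strict inequality $\overline H(0,p)<M$ gives $\underline\Pi_\pm(0,p_1)<p_2<\overline\Pi_\pm(0,p_1)$, and the growth of $\xi_\pm$ in $y_2$ (Proposition~\ref{cor:slopes_omega}) then ensures that $p\cdot y$ is dominated for $|y_2|$ large, which makes the piecewise definition Lipschitz and a viscosity subsolution across $\{y_1=0\}$. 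This ingredient is absent from your argument and cannot be recovered by simply enlarging $R_2$, $c$, or $C$. In the remaining sub-cases (First, Second, Fourth, and point~2), your selection of $q_{1,\pm}$ and the four-region verification are essentially the paper's argument.
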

\begin{remark}
    From the convexity of  $ \HoneminusT(0,\cdot)$ and $ \HoneplusT(0,\cdot)$, we deduce
   that if   $E<   \HoneminusT(0,p_1) =
    \HoneplusT(0,p_1))$, then  $\HoneminusT(0,\cdot)$ and  $\HoneplusT(0,\cdot)$
    are  (strictly) monotone near $p_1$, because
    \begin{eqnarray*}
      \HoneminusT(0,p_1)>E\ge \min_{q_1}\HoneminusT(0,q_1),\\
      \HoneplusT(0,p_1)>E\ge \min_{q_1}\HoneplusT(0,q_1).      
    \end{eqnarray*}
 This justifies the assumptions \eqref{eq:78}-\eqref{eq:79} and the assumptions in case 2.
  \end{remark}
\begin{proof}
  We start with proving point 1.
  \begin{description}
  \item[First case:  $ \HoneminusT(0,p_1)> \HoneplusT(0,p_1)$.]
    $\;$\\
 Because  $\HoneminusT(0,p_1) > E \ge \min \HoneplusT(0,\cdot)$, we see that there exists $\tilde p_1< p_1$ such that  $  \HoneplusT(0,\tilde p_1)=\HoneminusT(0,p_1)$.
  It is possible to choose $c>0$ sufficiently large  such that
  \begin{equation}
  \label{eq:83}
  p_1y_1 +\xi_{-} (0,p_1,y)-c < \min \Bigl(
  p\cdot y,    \tilde p_1y_1+\xi_+ (0,\tilde p_1,y)\Bigr )    \quad \hbox{for all } y\in  \overline \Omega_-.
\end{equation}
Let us set $\chi_1(y)=\min(p\cdot y,  p_1y_1 +\xi_- (0,p_1,y)-c)$.  Because   $p_1> \tilde p_1$,   it is  possible to choose $R_2>R_1$ such that
  \begin{eqnarray}
    \label{eq:84}
    \tilde p_1y_1+\xi_+ (0,\tilde p_1,y)  <
    \chi_1(y) \quad  \hbox{for all } y\in \R^2 \hbox{ such that }
    \left \{
      \begin{array}[c]{l}
y_1>0,\\ |y| \ge R_2, \\ |y_2|\le R_0,       
      \end{array}\right.
  \end{eqnarray}
  recalling that $R_0$ and $R_1$ were introduced in paragraph \ref{sec:generalization3setting}.\\
    Let us set  $\chi_2(y)=\min( \tilde p_1y_1 +\xi_+ (0,\tilde p_1,y), \chi_1(y))$.
  With $w$ defined in Subsection \ref{sec:ergod-const-glob}, we may choose $C>0$ sufficiently large such that
\begin{equation}
  \label{eq:85} w(y)-C <  \chi_2(y),  \quad \hbox{for all } y\in 
 \overline {B_{R_2}(0)}.
\end{equation}
In view of  \eqref{eq:83},  \eqref{eq:84},  \eqref{eq:85},
 the function
  $\chi: y\mapsto \min\Bigl(w(y)-C,\chi_2(y)\Bigr)$
 has all the desired properties.
     \item[Second  case:  $ \HoneplusT(0,p_1)> \HoneminusT(0,p_1)$.]
 The desired result is obtained in a very similar manner as in the previous case, by exchanging the roles of $+$ and $-$.
\item[Third  case:  $ \HoneplusT(0,p_1)= \HoneminusT(0,p_1) > \overline H(0,p)  $.] $\;$\\
    Clearly, $ \HoneplusT(0,p_1)> \overline H(0,p)$ implies that
    $
       \HoneplusT(0,p_1)> \min_{q}\overline H(0,p_1e_1+q e_2)$, hence the slopes $ \underline \Pi_{\pm}$ and  $\underline \Pi_{\pm}$ can be defined similarly as in \eqref{eq:26} and \eqref{eq:27} and 
     \begin{equation}
       \label{eq:86}
       \underline \Pi_+(0, p_1) <  p_2 < \overline \Pi_+(0, p_1).
     \end{equation}
     Note that 
 \begin{equation}
   \label{eq:87}
    \underline \Pi_-(0, p_1) =  \underline \Pi_+(0, p_1) \quad \hbox{and } \quad    \overline \Pi_-(0, p_1) =  \overline \Pi_+(0, p_1),
     %  \underline \Pi_-(0, p_1) <  p_2 < \overline \Pi_-(0, p_1).
  \end{equation}
  because  $ \HoneplusT(0,p_1)= \HoneminusT(0,p_1) $.
  From the properties of $\xi_{\pm}$, we deduce easily that there exists $c>0 $ such that
  \begin{equation}
    \label{eq:88}
    \begin{array}[c]{rcll}
      p_1y_1+ \xi_+(0,p_1, y) -c &<& p\cdot y,\quad\quad   &\hbox{if } y\in  \overline{\Omega_+},\\
      p_1y_1+ \xi_-(0,p_1, y) -c &<& p\cdot y,\quad\quad   &\hbox{if } y\in  \overline{\Omega_-}.   
    \end{array}
  \end{equation}
  Because of \eqref{eq:86} and \eqref{eq:87}, there exists $\delta>0$ such that
\begin{equation}
  \label{eq:89}
    p\cdot y <\left\{ 
    \begin{array}[c]{ll}
     p_1y_1+ \xi_+(0,p_1, y) -c \quad\quad   & \hbox{ if } y_1\ge 0 \hbox{ and  }  |y_2|>\delta,
\\                                            
         p_1y_1+ \xi_-(0,p_1, y) -c,\quad\quad & \hbox{ if } y_1\le 0 \hbox{ and  }  |y_2|>\delta.
    \end{array}\right.
  \end{equation}
  Let us set $R_2=\max(\delta, R_1)$.  With $w$ defined in Subsection \ref{sec:ergod-const-glob}, we may choose $C>0$ sufficiently large such that
  \begin{equation}
     \label{eq:90}
    w(y)-C < \left\{ \begin{array}[c]{ll}
     \min ( p\cdot y,   p_1y_1+ \xi_+(0,p_1, y) -c ) \quad &\hbox{if } y\in 
                                                                       \overline {B_{R_2}(0)} \hbox{ and  } y_1\ge 0,\\
       \min ( p\cdot y,   p_1y_1+ \xi_-(0,p_1, y) -c ) \quad &\hbox{if } y\in 
 \overline {B_{R_2}(0)} \hbox{ and  } y_1\le 0.
    \end{array}\right.
  \end{equation}
Let us define
\begin{equation}
  \label{eq:91}
  \chi (y) =\left\{
    \begin{array}[c]{ll}
      \min (w(y)-C,  p\cdot y,   p_1y_1+ \xi_-(0,p_1, y) -c)  \quad &\hbox{if } y_1\le 0,\\
       \min (w(y)-C,  p\cdot y,   p_1y_1+ \xi_+(0,p_1, y) -c)  \quad &\hbox{if } y_1\ge 0.
    \end{array}
\right . 
\end{equation}
Thanks to  \eqref{eq:89} and  \eqref{eq:90}, $\chi$ is Lipschitz continuous in $\R^2$ and $\chi(y)\le  p\cdot y$ at all $y\in \R^2$. It can also be checked that $\chi$ is viscosity subsolution of  \eqref{eq:80}.
\item[Fourth case:   \eqref{eq:78} and  \eqref{eq:79} hold.]
  We may assume without loss of generality that  $\HoneplusT(0,\cdot)$ is increasing near $p_1$.   \\
    There exists $\tilde p_1 < p_1$ such that $E< \HoneplusT(0,\tilde p_1)<
  \HoneplusT(0, p_1)$.  Hence, there exists a constant $R_2>R_1$ such
  \begin{eqnarray*}
    \tilde p_1 y_1 +\xi_+(0,\tilde p_1,y) <    p_1 y_1 +\xi_-(0, p_1,y) \quad \hbox{if }
    |y|\ge R_2, \; y_1> 0   \hbox{ and } |y_2|<R_0,\\
     \tilde p_1 y_1 +\xi_+(0,\tilde p_1,y) >    p_1 y_1 +\xi_-(0,p_1,y) \quad \hbox{if }  |y|\ge R_2, \; y_1< 0   \hbox{ and } |y_2|<R_0 .
  \end{eqnarray*}
  Then there exists a constant $c>0$ such that
  \begin{displaymath}
    \min (  \tilde p_1 y_1 +\xi_+(0,\tilde p_1,y) -c ,  p_1 y_1 +\xi_-(0,p_1,y)-c )< p\cdot y \quad \hbox{in } \overline \Omega.
  \end{displaymath}
  Finally, there exists $C>0$ such that
  \begin{displaymath}
    w(y)-C \le \min ( \tilde p_1 y_1 +\xi_+(0,\tilde p_1,y) -c ,
    p_1 y_1 +\xi_-(0,p_1,y)-c, p\cdot y ) \quad \hbox{in }  B_{R_2}(0).
  \end{displaymath}
  The function
  \begin{displaymath}
    \chi(y)=  \min ( w(y)-C, \tilde p_1 y_1 +\xi_+(0,\tilde p_1,y) -c ,
    p_1 y_1 +\xi_-(0,p_1,y)-c, p\cdot y )
  \end{displaymath}
  has all the desired properties.
\end{description}
Point 1. is proved.

We now tackle point 2.  Since  $\HoneplusT(0,\cdot)$ is non increasing near $p_1$, coercive and convex,
there exists  $\tilde p_1 < p_1$ such that $E< \HoneplusT(0,\tilde p_1)=
\HoneplusT(0, p_1)+\eta$. The function $\chi$ constructed in the Fourth case above with the new value of $\tilde p_1$ satisfies \eqref{eq:80} and \eqref{eq:82}.
\end{proof}

The following proposition is the counterpart of  Proposition   \ref{sec:function--underline-3}. Its proof is essentially the same as that of  Proposition   \ref{sec:function--underline-3}, except in a particular case.
\begin{proposition}
  \label{sec:}
  For all $\phi\in C^1(\R^2)$  such that $0$ is a local minimizer of $\underline u -\phi$  and
either 
\begin{displaymath}
  \max(E, \overline H(0, D \phi(0))) <  \max ( \HoneminusT(0,\partial_{1} \phi(0)),
  \HoneplusT(0,\partial_{1} \phi(0)))
\end{displaymath}
or
\begin{displaymath}
E< \overline H(0, D \phi(0)))  = \max ( \HoneminusT(0,\partial_{1} \phi(0)),
  \HoneplusT(0,\partial_{1} \phi(0)))  ,
\end{displaymath}
we have
\begin{equation}
  \label{eq:1875_a}
 \alpha \underline u(0)+ \max ( \HoneminusT(0,\partial_{1} \phi(0)),
  \HoneplusT(0,\partial_{1} \phi(0))) \ge 0.
\end{equation}
\end{proposition}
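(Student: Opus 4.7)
The strategy is to mimic the proofs of Propositions \ref{sec:function--underline-2}, \ref{sec:function--underline-3} and \ref{sec:function--underline-4}, using the subcorrectors constructed in Lemma \ref{lem:183} as the building block for Evans' perturbed test-function. As in those proofs, we may assume that $\phi(0)=\underline u(0)$ and that $0$ is a strict local minimizer of $\underline u-\phi$; set $p=D\phi(0)$ and $M:=\max(\HoneminusT(0,p_1),\HoneplusT(0,p_1))$. Arguing by contradiction, suppose $\alpha\underline u(0)+M=-\theta<0$.

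First, assume that we are in one of the configurations covered by point 1 of Lemma \ref{lem:183}, namely \eqref{eq:75}, or \eqref{eq:76}--\eqref{eq:77}, or \eqref{eq:78}--\eqref{eq:79}. The lemma provides a Lipschitz function $\chi:\R^2\to\R$ with $\chi(y)\le p\cdot y$ and $H(0,y,D\chi)\le M$ in the viscosity sense on $\R^2$. Define the perturbed test function
\[
\phi_\varepsilon(x)=\phi(x)-p\cdot x+\varepsilon\chi\bigl(\tfrac{x}{\varepsilon}\bigr).
\]
Since $D\phi(x)-p\to 0$ and $\phi(x)-p\cdot x=\underline u(0)+o(|x|)=-M/\alpha-\theta/\alpha+o(|x|)$ as $x\to 0$, the continuity estimate \eqref{eq:9} on $H$ together with \eqref{eq:81} shows that for $r$ and $\varepsilon$ small enough, $\phi_\varepsilon$ is a viscosity subsolution of $\alpha\phi_\varepsilon+H_\varepsilon(x,D\phi_\varepsilon)\le -\theta/2$ in $B_r(0)$. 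Since $0$ is a strict local minimizer of $\underline u-\phi$ and $\varepsilon\chi(x/\varepsilon)\le p\cdot x$, there exists $k(r)>0$ with $\phi_\varepsilon\le u_\varepsilon-k(r)/2$ on $\partial B_r(0)$ for all sufficiently small $\varepsilon$. The comparison principle for \eqref{eq:13} then gives $\phi_\varepsilon\le u_\varepsilon-k(r)/2$ in $\overline{B_r(0)}$; evaluating at $0$ along a sequence $\varepsilon_n$ with $u_{\varepsilon_n}(0)\to\underline u(0)$ yields $\underline u(0)\le\underline u(0)-k(r)/2$, a contradiction.

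The remaining configuration is the one handled by point 2 of Lemma \ref{lem:183}: one has $E<\overline H(0,p)=\HoneminusT(0,p_1)=\HoneplusT(0,p_1)$ and both $\HoneminusT(0,\cdot)$ and $-\HoneplusT(0,\cdot)$ are non-decreasing near $p_1$. Here, for every $\eta>0$, point 2 provides a Lipschitz function $\chi^\eta$ with $\chi^\eta(y)\le p\cdot y$ and $H(0,y,D\chi^\eta)\le M+\eta$ on $\R^2$. Repeating the construction above with $\chi^\eta$ in place of $\chi$, for $\eta<\theta/2$ one recovers $\alpha\phi_\varepsilon+H_\varepsilon(x,D\phi_\varepsilon)\le -\theta/2+\eta<0$ in a small ball, which still permits the same contradiction argument; passing to the limit $\eta\to 0$ at the end (exactly as in the proof of Proposition \ref{sec:function--underline-4}) gives the desired inequality \eqref{eq:1875_a}.

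The main difficulty, already encapsulated in Lemma \ref{lem:183}, is the borderline sub-case where $\HoneplusT$ and $\HoneminusT$ agree at $p_1$ and neither side admits a ``bending'' towards a smaller tangential slope that would produce a genuine subcorrector; this is precisely why one is forced to relax the inequality by $\eta$ and conclude by passage to the limit. Beyond that point, everything else is a routine transcription of the single half-line argument of Proposition \ref{sec:function--underline-3}, with the only bookkeeping change being that in all the viscosity and boundary estimates one must track the two Hamiltonians $\HoneminusT$ and $\HoneplusT$ simultaneously through their maximum.
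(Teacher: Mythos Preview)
Your proposal is correct and follows essentially the same approach as the paper's own proof: invoke the subcorrectors of Lemma~\ref{lem:183} (point~1 in the generic configurations, point~2 in the borderline symmetric case) and plug them into the perturbed test-function argument of Proposition~\ref{sec:function--underline-2}/\ref{sec:function--underline-3}. One minor remark: in the point~2 case you already obtain the contradiction by fixing any $\eta<\theta/2$, so the additional ``passing to the limit $\eta\to 0$'' is superfluous in your contradiction setup (the paper instead derives $\alpha\underline u(0)+M\ge -\eta$ directly for each $\eta$ and then lets $\eta\to 0$, which is the logically equivalent non-contradiction version).
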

\begin{proof}
  Let us set $p= D\phi(0)$.

  If $p$ satisfies the assumptions in point 1. of Lemma  \ref{lem:183}, then the proof follows exactly the same arguments as the proof of Proposition
  \ref{sec:function--underline-3}, using the subcorrector arising in  point 1. of Lemma  \ref{lem:183}.
  
  If $p$ satisfies the assumptions in point 2. of  Lemma  \ref{lem:183}, then,
  using the approximate subcorrector $\chi$ arising in  point 2. of  Lemma  \ref{lem:183}, the same arguments yield that  for any $\eta>0$ small enough,
  \begin{displaymath}
   \alpha \underline u(0)+ \max ( \HoneminusT(0,p_1),
  \HoneplusT(0,p_1)) \ge -\eta,    
  \end{displaymath}
and  \eqref{eq:1875_a} is obtained by letting $\eta$ tend to $0$.
\end{proof}

`
The following proposition is the counterpart of  Proposition   \ref{sec:function--underline-4}. A key argument in its proof  involves 
subcorrectors that are different from those constructed in  Lemmas \ref{lem:2} and \ref{lem:3} above.
\begin{proposition}
  \label{sec:function--underline-184}
  For all $\phi\in C^1(\R^2)$  such that $0$ is a local minimizer of $\underline u -\phi$  and $  E \ge  \max ( \HoneminusT(0,\partial_{1} \phi(0)),
   \HoneplusT(0,\partial_{1} \phi(0)),\overline H(0, D \phi(0))$,  we have
\begin{equation}
  \label{eq:92}
 \alpha \underline u(0)+ E \ge 0.
\end{equation}
\end{proposition}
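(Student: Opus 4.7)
The strategy mirrors that of Proposition~\ref{sec:function--underline-4}: for every $\eta>0$ we shall construct a Lipschitz subcorrector $\chi:\R^2\to \R$ satisfying $\chi(y)\le p\cdot y$ for all $y\in\R^2$ and $H(0,y,D\chi)\le E+\eta$ in $\R^2$ in the viscosity sense, where $p=D\phi(0)$. Once $\chi$ is available, repeating verbatim the perturbed test-function argument of Proposition~\ref{sec:function--underline-2} (with the $\overline H$-subsolution used there replaced by this $(E+\eta)$-subsolution) will yield $\alpha\underline u(0)+E+\eta\ge 0$, and letting $\eta\to 0$ will give \eqref{eq:92}.

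The subcorrector is assembled from five building blocks. Under the hypothesis $E\ge\max(\overline H(0,p),\HoneminusT(0,p_1),\HoneplusT(0,p_1))$, coercivity and convexity provide: a unique pair $\underline q_2<p_2<\overline q_2$ with $\overline H(0,p_1e_1+\underline q_2 e_2)=\overline H(0,p_1e_1+\overline q_2 e_2)=E+\eta$; a real number $q_{1,-}>p_1$ with $\HoneminusT(0,q_{1,-})=E+\eta$; and a real number $q_{1,+}<p_1$ with $\HoneplusT(0,q_{1,+})=E+\eta$. Setting $\overline q=p_1e_1+\overline q_2 e_2$ and $\underline q=p_1e_1+\underline q_2 e_2$, one has $\min(\overline q\cdot y,\underline q\cdot y)\le p\cdot y$ on $\R^2$. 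Let $\xi_\pm(0,q_{1,\pm},\cdot)$ be the correctors for $H_{1,\pm,\per}$ given by the Case~3 counterpart of Theorem~\ref{thm:stability_from_truncated_cell_pb_to_global_cell_pb}, and let $w$ be the function of Subsection~\ref{sec:ergod-const-glob}. Choose successively $c>0$, then $R_2>R_1$, then $C>0$ large enough so that: $q_{1,\pm}y_1+\xi_\pm(0,q_{1,\pm},y)-c<\min(\overline q\cdot y,\underline q\cdot y)$ in $\overline{\Omega_\pm}$; the reverse inequalities hold in the far-field cones $D_\pm$ defined as in \eqref{eq:73}--\eqref{eq:74}; and $w-C$ lies below all four other functions on $\overline{B_{R_2}(0)}$. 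Then define
\[\chi(y)=\min\Bigl(w(y)-C,\;q_{1,-}y_1+\xi_-(0,q_{1,-},y)-c,\;q_{1,+}y_1+\xi_+(0,q_{1,+},y)-c,\;\underline q\cdot y,\;\overline q\cdot y\Bigr).\]

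It remains to check that $\chi$ is a viscosity subsolution of $H(0,y,D\chi)\le E+\eta$ in $\R^2$. As in Lemmas~\ref{lem:2}, \ref{lem:3} and~\ref{lem:182}, we partition $\R^2$ into $\overline{B_{R_2}(0)}$, $\overline{\Omega_-}$, $\overline{\Omega_+}$, $D_-$, $D_+$ and the complement, and on each piece the chosen ordering ensures that $\chi$ coincides with the minimum of components which are individually almost everywhere subsolutions there: $w-C$ everywhere via \eqref{eq:40} together with $E\le E+\eta$; $q_{1,\pm}y_1+\xi_\pm(0,q_{1,\pm},y)-c$ inside the strips where $H(0,y,\cdot)=H_{1,\pm,\per}(0,y,\cdot)$; and the linear functions $\overline q\cdot y$, $\underline q\cdot y$ in $\R^2\setminus\Omega$ where $H(0,y,\cdot)=\overline H(0,\cdot)$. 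Lemma~\ref{lemma_appendix} transfers the almost-everywhere subsolution property to the minimum, and convexity of $q\mapsto H(0,y,q)$ together with \cite[Prop.~5.1, Chapter II]{MR1484411} upgrades it to a viscosity subsolution. The main obstacle lies in the bookkeeping required to guarantee that in the transition zones---notably near $\cM_0$, where $\xi_-$, $\xi_+$ and $w$ must coexist, and along the boundaries of $\Omega_\pm$ where one must switch between $\xi_\pm$ and the linear pieces---the ``correct'' function is selected by the minimum; this is controlled by the successive choices of $R_2$, $c$ and $C$ exactly as in the proof of Lemma~\ref{lem:182}.
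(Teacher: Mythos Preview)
Your proof is correct and follows essentially the same route as the paper's: both construct the same five-piece subcorrector
\[
\chi(y)=\min\Bigl(w(y)-C,\;q_{1,-}y_1+\xi_-(0,q_{1,-},y)-c,\;q_{1,+}y_1+\xi_+(0,q_{1,+},y)-c,\;\underline q\cdot y,\;\overline q\cdot y\Bigr),
\]
with the same choices $q_{1,-}>p_1>q_{1,+}$ and $\underline q_2<p_2<\overline q_2$, verify the viscosity subsolution property by the region-by-region argument in the style of Lemmas~\ref{lem:2}, \ref{lem:3}, \ref{lem:182}, and then invoke the perturbed test-function argument of Proposition~\ref{sec:function--underline-2} before letting $\eta\to 0$.
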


\begin{proof}
  We may  assume that $\phi(0)=\underline u(0)$ and that $\underline u -\phi$ has a strict local minimum at the origin and we set $p = D\phi(0)$.
  For all $\eta>0$, it is clear that  $E+\eta>  \min_{q_1\in \R}  H_{1,\pm} (0, q_1)$ and that  $E+\eta> \min_{q\in \R^2} \overline H (0, q)$.
  From  the latter  inequality, there exists a unique pair $(\underline q_2, \overline q_2 )\in \R^2$ such that   $\underline q_2< p_2<\overline q_2$ and that
  $\overline H (0, p_1e_1+\underline q_2 e_2)= \overline H (0, p_1e_1+\overline q_2 e_2)=E+\eta$.
  Let us set $\overline q=p_1e_1+\overline q_2 e_2$ and  $\underline q=p_1e_1+\underline q_2 e_2$. It is straightforward to check that for all $y\in \R^2$, $\min(\overline q\cdot y, \underline q\cdot y) \le p\cdot y$. 
  There also exist $q_{1,-}, q_{1,+}\in \R$ such that $q_{1,-}> p_1>q_{1,+}$ and $\HoneminusT(0,q_{1,-})= \HoneplusT(0,q_{1,+})=E+\eta$.
  Hence, there exists a constant $c>0$ such that $q_{1,-}y_1+\xi_-(0,q_{1,-},y)-c< \min(\underline q \cdot y, \overline q \cdot y)$ for all  $y\in \overline {\Omega_-}$ and  $q_{1,+}y_1+\xi_+(0,q_{1,+},y)-c< \min(\underline q \cdot y, \overline q \cdot y)$ for all  $y\in \overline {\Omega_+}$.
 It is  then possible to choose $R_2>R_1$ such that
  \begin{eqnarray*}
%    \label{eq:2025q}
    q_{1,-}y_1+\xi_-(0,q_{1,-},y)-c>    \min( \underline q \cdot y, \overline q \cdot y, q_{1,+}y_1+\xi_+(0,q_{1,+},y)-c   ) ,
  \end{eqnarray*}
for all  $y\in \R^2$  such that $ y_1>0$, $|y| \ge R_2$ and $|y_2|\le R_0$,       
and  
 \begin{eqnarray*}
   q_{1,+}y_1+\xi_+(0,q_{1,+},y)-c >    \min( \underline q \cdot y, \overline q \cdot y, q_{1,-}y_1+\xi_-(0,q_{1,-},y)-c ) ,
 \end{eqnarray*}
for all  $y\in \R^2$  such that $ y_1<0$, $|y| \ge R_2$ and $|y_2|\le R_0$.
Recall that  $R_0$, $R_1$ are  introduced in paragraph \ref{sec:generalization3setting}.

Let us set
\begin{displaymath}
  \chi_1(y)= \min( \underline q \cdot y, \overline q \cdot y, q_{1,-}y_1+\xi_-(0,q_{1,-},y)-c,  q_{1,+}y_1+\xi_+(0,q_{1,+},y)-c ). 
\end{displaymath}
 We may then choose $C>0$ such that $w(y)-C <  \chi_1(y)$ for all $y\in \overline {B_{R_2}(0)}$.
  
 Collecting all the information above and arguing as above, the function
 $\chi: y\mapsto \min(  w(y)-C,\chi_1(y) ) $ is  a subsolution of  $H(0,y,D\chi(y))\le E+\eta$ in $\R^2$ and $\chi(y)\le p\cdot y$ for all $y\in \R^2$.
 % as the minimum of five Lipschitz continuous subsolutions, because $H(0,y,\cdot) $ is convex. We also have $\chi(y)\le p\cdot y$ for all $y\in \R^2$.
 
 Reproducing the proof of Proposition  \ref{sec:function--underline-2} with this new  choice of $\chi$ leads to the inequality
 \begin{displaymath}
  \alpha \underline u (0) + E+\eta \ge 0.
 \end{displaymath}
Letting $\eta$ tend to $0$, we obtain the desired result.
\end{proof}

\subsection{Perspectives}
The most natural generalization that we do not tackle here
concerns longitudinally periodic defects located near two half-lines with a common endpoint, forming an angle different from $0$ and $\pi$. More explicitly, consider two linearly independent unitary vectors $b_1$ and $b_2$ in $\R^2$. For $i=1,2$,  let $\Omega_i$ be the subset of $\R^2$ defined by
\begin{displaymath}
  \Omega_i= \{ s b_i + t b_i^\perp, s\le 0, t\in (-R_0,R_0)  \},
\end{displaymath}
where  $b_i^\perp$  is a unitary vector orthogonal to $b_i$ and $R_0$ is a positive number. Let us also fix  $R_1>R_0$  such that $ \bigcap_{i=1}^2 \overline {\Omega_i}\subset  B_{R_1}(0)$ and set $\Omega=
B_{R_1}(0) \cup 
\bigcup_{i=1}^2 \Omega_i $.
For a small positive parameter  $\varepsilon$
that will eventually vanish, %set
% $\Omega_{i,\varepsilon}=\varepsilon \Omega_i$ and $\Omega_\varepsilon =\varepsilon \Omega$. T
the Hamiltonian $H_\varepsilon: \R^2\times \R^2 \to \R$ is a smooth function for simplicity, convex and coercive w.r.t. its second argument,  and of the form  \eqref{eq:7}, where
\begin{itemize}
\item $H(x,y,p)=\overline{H}(x,p)$  if $y\notin \Omega$
\item for $y\in  \Omega_i$ and far enough from the origin, $H(x, y,p)= H_{i,\per}(x,y,p)$, where $ H_{i,\per}$ is periodic with respect to $y \cdot b_i\in \R$.
\end{itemize}
We expect that the effective problem involve the stratification  $\R^2=\cM_0\cup\cM_1\cup\cM_2$ where $\cM_0=\{0\}$ and $\cM_1=\cup_{i=1}^2 \cM_{1,i}$,  $\cM_{1,i}=\{ s b_i, s< 0 \}$.

Although this situation somehow resembles the one addressed in Subsection \ref{sec:generalization3}, the homogenization should be more challenging.
Indeed, since the angle between  $\cM_{1,1}$ and $\cM_{1,2}$ is not $\pi$, it does not seem easy to handle the interactions between the correctors $\xi_i$ associated to the effective tangential Hamiltonians $ \overline H_{i,T}$, $i=1,2$, as we did  in Subsection \ref{sec:generalization3}.

A strategy that would be succesful in the latter case should also work with no major changes
for  longitudinally periodic defects located near $N$ half-lines with a common endpoint, $N$ being  an arbitrary positive integer.

% in  
% %should be no difficulty in studying
% cases leading to stratified problems in which $\cM_1=  \cup_{i=1}^N  \cM_{1,i}$,  for $N$ distinct vectors $b_i$  and $  \cM_{1,i}= \{ s b_i s< 0 \} $.

\appendix
\section{A useful property}
For completeness, we state and prove a property which is used repeatedly in the paper, in particular in the proofs of Lemmas  \ref{lem:2}, \ref{lem:3},
 \ref{lem:182} and  \ref{lem:183}.
\begin{lemma}\label{lemma_appendix}
  Let $H:\R^d\times\R^d\to \R$ be a continuous Hamiltonian, convex with respect to its second argument. Consider two locally  Lipschitz continuous viscosity subsolutions $u_i$, $i=1,2$, of
$\lambda v + H(x, Dv)\le 0$ in $\R^d$. Then $u=\min(u_1, u_2) $ is also a subsolution.
\end{lemma}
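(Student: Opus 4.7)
My plan is to reduce the claim to an almost-everywhere subsolution inequality and then exploit the convexity of $H$ with respect to its second variable in order to recover the viscosity property, following the same a.e.\ strategy already used in the body of the paper (see Remark~\ref{rem:2} and the proofs of Lemmas~\ref{lem:2} and \ref{lem:3}). First, I note that $u = \min(u_1, u_2)$ is locally Lipschitz as a minimum of two locally Lipschitz functions, so by Rademacher's theorem $u$, $u_1$ and $u_2$ are all differentiable at almost every $x \in \R^d$. Moreover, each $u_i$ being a locally Lipschitz viscosity subsolution satisfies, by \cite[Prop.~1.9, Chapter~I]{MR1484411}, the pointwise inequality $\lambda u_i(x) + H(x, Du_i(x)) \le 0$ at almost every $x$.

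The core of the argument will be the gradient identity $Du(x) \in \{Du_1(x), Du_2(x)\}$ at any point $x$ of common differentiability. When $u_1(x) \ne u_2(x)$, this is immediate: continuity of $u_1$ and $u_2$ forces $u$ to coincide locally with whichever of the two is smaller at $x$, and the gradient inherits accordingly. The delicate case is the contact set $\{u_1 = u_2\}$: there I will argue that each function $u_i - u$ is everywhere nonnegative and attains its minimum value zero at $x$, so whenever both $u$ and $u_i$ are differentiable at $x$ we must have $D(u_i-u)(x)=0$, i.e. $Du(x) = Du_i(x)$; this yields the identity for both $i = 1$ and $i = 2$ simultaneously.

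Once the gradient identity is established, the inequality $\lambda u(x) + H(x, Du(x)) \le 0$ at almost every $x$ will follow by selecting an index $i$ with $Du(x) = Du_i(x)$ and combining the pointwise subsolution property for $u_i$ with $u(x) \le u_i(x)$ (implicitly using $\lambda \ge 0$, which is the relevant regime of the paper). Finally, the convexity of $H(x,\cdot)$ will allow me to invoke \cite[Prop.~5.1, Chapter~II]{MR1484411} to upgrade this almost-everywhere subsolution property for the locally Lipschitz function $u$ into a genuine viscosity subsolution property. The main obstacle is really the analysis on the contact set $\{u_1 = u_2\}$, and the convexity assumption on $H$ is essential at the final upgrade step, since it is precisely what bridges the gap between a.e.\ and viscosity subsolutions.
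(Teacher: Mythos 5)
Your proof is correct, and its overall skeleton (local Lipschitz regularity of $u$, Rademacher's theorem, the a.e.\ pointwise inequality for each $u_i$ from \cite[Prop.~1.9, Chapter~I]{MR1484411}, and the final convexity upgrade via \cite[Prop.~5.1, Chapter~II]{MR1484411}) is the same as in the paper; the genuine difference is how you treat the contact set $\{u_1=u_2\}$. The paper argues by contradiction: assuming $Du_1(x)\neq Du_2(x)$ at a common differentiability point with $u_1(x)=u_2(x)$, it introduces the open half-spaces $F_\pm$ determined by the sign of $\xi\cdot(Du_1(x)-Du_2(x))$, uses first-order expansions to show that $u$ coincides with $u_1$ on $(\{x\}+F_-)\cap B_r(x)$ and with $u_2$ on $(\{x\}+F_+)\cap B_r(x)$, and then exploits the differentiability of $u$ at $x$ to force $Du(x)=Du_1(x)=Du_2(x)$, contradicting the assumption. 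Your route is more direct and arguably cleaner: since $u_i-u\ge 0$ everywhere and $(u_i-u)(x)=0$ at a contact point, $x$ is a global minimizer of $u_i-u$, so $D(u_i-u)(x)=0$ wherever both functions are differentiable, giving $Du(x)=Du_1(x)=Du_2(x)$ with no contradiction argument and no half-space construction; both approaches yield the same gradient identity and hence the same a.e.\ inequality. One small remark: your appeal to $\lambda\ge 0$ is unnecessary. On the full-measure set you work with, you can always select the index $i$ attaining the minimum (at contact points either index works), and then $u(x)=u_i(x)$ exactly, so $\lambda u(x)+H(x,Du(x))=\lambda u_i(x)+H(x,Du_i(x))\le 0$ for any real $\lambda$, which matches the generality of the lemma as stated and of the paper's proof.
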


\begin{proof}

We know that for $i=1,2$, $\lambda u_i(x) + H(x, Du_i(x))\le 0$
at all $x\in \R^d\setminus E$, where $E$ is a negligible set,
 see also \cite[Prop 1.9, Chapter 1, page 31]{MR1484411}.
  
We also observe that $u$ is locally Lipschitz continuous as the minimum of two locally Lipschitz continuous functions. Since $u$, $u_1$ and $u_2$ are locally Lipschitz continuous, these functions are differentiable at almost every $x\in \R^d$.

Set $A=\{x\in \R^d \setminus E:  u, u_1, u_2  \hbox{ are  differentiable at } x\}$. The set $\R^d \setminus A $ is negligible.

Consider first $x\in A$ such that $u(x)=u_1(x)< u_2(x)$. There exists $r>0$ such that $u(y)=u_1(y)< u_2(y)$ in $B_r(x)$, hence $ Du(x)=Du_1(x)$ and  $\lambda u(x) + H(x, Du(x))\le 0$.

The same argument can be applied if  $u(x)=u_2(x)< u_1(x)$. In this case,  $ Du(x)=Du_2(x)$ and  $\lambda u(x) + H(x, Du(x))\le 0$.

Next, let us prove that it is not possible to find $x\in A$ such that $u_1(x)=u_2(x)$ and $Du_1(x)\not= Du_2(x)$. If it was the case, then the sets  $F_{-}= \{\xi\in \R^ d:  \xi \cdot  (Du_1(x)-Du_2(x)) <0\}$ and  $F_{+}=-F_-= \{\xi\in \R^ d:  \xi \cdot  (Du_1(x)-Du_2(x)) >0\}$ would be  open half vector spaces.
There would exists  $r>0$ such that
\begin{equation*}
  \begin{split}
    u_1(y)&=u_1(x)+Du_1(x)\cdot(y-x)+ o(|y-x|)\\
    &= u_2(x)+Du_1(x)\cdot(y-x)+ o(|y-x|)<
    u_2(y)    
  \end{split}
\end{equation*}
in $(\{x\}+F_-)\cap B_r(x)$, and
\begin{equation*}
  \begin{split}
  u_2(y)&=u_2(x)+Du_2(x)\cdot(y-x)+ o(|y-x|)\\&= u_1(x)+Du_2(x)\cdot(y-x)+ o(|y-x|)<
  u_1(y)
\end{split}
\end{equation*}
in $(\{x\}+F_+)\cap B_r(x)$.
Hence, $u$ would coincide with $ u_1$ in  $(\{x\}+F_-)\cap B_x(r)$ and with $u_2$ in  $(\{x\}+F_+)\cap B_r(x)$. But, since $u$ is differentiable at $x$, this would imply that for all $\xi \in F_-$
\begin{displaymath}
  Du_1(x)\cdot\xi=  Du(x)\cdot\xi, \quad \hbox{and}  -Du_2(x)\cdot\xi=  -Du(x)\cdot\xi,
\end{displaymath}
and therefore $Du(x)=Du_1(x)=Du_2(x)$, which is the desired contradiction.

% such that
% Next, let us focus on $x$ such that $u(x)=u_1(x)=u_2(x)$. Suppose that $Du(x)\not= Du_2(x)$. The set  $F= \{\xi\in \R^ d:  \xi \cdot  (Du(x)-Du_2(x)) <0\}$ is an open half vector space.
% Then $u(y)-u_2(y)= (y-x) \cdot  (Du(x)-Du_2(x))+ o(|y-x|)$ and there exists $r>0$ such that $u(y)-u_2(y)<0$ in
% $(\{x\}+ F) \cap B_r(x)$.  Therefore, for all $\xi\in F$, $Du(x)\cdot \xi = \lim_{t\to 0_+} \frac{u(x+t\xi)-u(x)}{t}=  \lim_{t\to 0_+} \frac{u_1(x+t\xi)-u_1(x)}{t}= Du_1(x)\cdot \xi$.
% Since $F$ is a half-space, this implies that $Du(x)=Du_1(x)$ and therefore
% $\lambda u(x) + H(x, Du(x))= \lambda u_1(x) + H(x, Du_1(x))\le 0$.

We have proved that for $x\in A$, $\lambda u(x) + H(x, Du(x))\le 0$, so the inequality is
satisfied pointwise almost everywhere. But $H$ is convex in $p$, so thanks to \cite[Prop 5.1, Chapter 1, page 77]{MR1484411}, $u$ is a viscosity subsolution of $\lambda v + H(x, Dv)\le 0$ in $\R^d$.

\begin{remark}
  Note that if $H$ is coercive in $p$ uniformly in $x$, then the viscosity subsolutions of $\lambda v + H(x, Dv)\le 0$ which are (locally) bounded are  (locally) Lipschitz continuous.
\end{remark}

\end{proof}

 \section*{Acknowledgments}
  The first author was  partially supported by the ANR (Agence Nationale de la Recherche) through project ANR-16-CE40-0015-01 and  by the chair Finance and Sustainable Development and FiME Lab (Institut Europlace de Finance).   The first author would like to thank Nicoletta Tchou for quite helpful discussions.

\bibliographystyle{siam}
\bibliography{Hom_HJB_defect}

\end{document}